\documentclass[10pt]{article}

\title{Injective types in univalent mathematics}

\author{Mart\'{\i}n H\"otzel Escard\'o \\ School of Computer Science \\ University of Birmingham, UK}


\usepackage{amsmath,stmaryrd,amssymb,amsthm,enumerate,url,hyperref,scalerel,mathtools}

\usepackage[bb=boondox]{mathalfa} 

\usepackage[nohug]{diagrams}
\newarrow{Into}{C}{-}{-}{-}{>}
\newarrow{Onto}{-}{-}{-}{-}{>>}
\newarrow{Eto}{.}{.}{.}{.}{>}
\newarrow{Id}{=}{=}{=}{=}{=}
\newarrow{Mapsto}{|}{-}{-}{-}{>}

\usepackage{xcolor}
\definecolor{black}{rgb}{0,0,0.0}

\definecolor{darkblue}{rgb}{0,0,0.4}
\newcommand{\db}{\textcolor{darkblue}}
\definecolor{darkgreen}{rgb}{0,0.3,0.9}

\definecolor{grey}{rgb}{0.4,0.4,0.4}

\definecolor{darkred}{rgb}{0.5,0,0}

\definecolor{links}{rgb}{0.47, 0.27, 0.23}

\hypersetup{
    colorlinks, linkcolor={links},
    citecolor={links}, urlcolor={links}
}

\newcommand{\emb}{\hookrightarrow}
\newcommand{\partialto}{\rightharpoonup}

\makeatletter
\newcommand\leftcolor[2]{%
  \edef\@tempa{#1}%
  \ifx\@tempa\@empty
    \edef\@tempa{darkblue}%
  \fi
 \mathinner\bgroup\begingroup\color{\@tempa}\left#2\normalcolor%
}
\newcommand\rightcolor[2]{%
  \edef\@tempa{#1}%
  \ifx\@tempa\@empty
    \edef\@tempa{darkblue}%
  \fi
  \color{\@tempa}\right#2\endgroup\egroup%
}
\makeatother

\newcommand{\bracket}[1]{\leftcolor{darkgreen}(\,#1\,\rightcolor{darkgreen})}

\newcommand{\trunc}[1]{\leftcolor{darkblue}\llbracket\,\db{#1}\,\rightcolor{darkblue}\rrbracket}

\newcommand{\df}[1]{\emph{\db{#1}}}

\newcommand{\m}[1]{\db{$#1$}}
\newcommand{\M}[1]{\[\db{#1}\]}

\newcommand{\EM}{\operatorname{EM}}
\newcommand{\pcomp}{\operatorname{\bullet}}
\newcommand{\transport}{\operatorname{transport}}
\newcommand{\ap}{\operatorname{ap}}
\newcommand{\fst}{\operatorname{pr}_1}
\newcommand{\snd}{\operatorname{pr}_2}
\newcommand{\lift}{\operatorname{lift}}
\newcommand{\id}{\operatorname{id}}
\newcommand{\comp}{\mathrel{\circ}}

\newcommand{\N}{\mathbb{N}}

\newcommand{\U}{\mathcal{U}}
\newcommand{\V}{\mathcal{V}}
\newcommand{\W}{\mathcal{W}}
\newcommand{\T}{\mathcal{T}}
\newcommand{\Lift}{\mathcal{L}}

\newcommand{\Zero}{\mathbb{0}}
\newcommand{\One}{\mathbb{1}}
\newcommand{\one}{\operatorname{\star}}
\newcommand{\Id}{\operatorname{Id}}
\newcommand{\refl}{\operatorname{refl}}
\newcommand{\eqdef}{\overset{\text{def}}{=}}

\newcommand{\idto}{=}

\newcommand{\edown}{\scaleobj{0.7}{\,\downarrow\,}}
\newcommand{\eup}{\scaleobj{0.7}{\,\uparrow\,}}
\newcommand{\wps}{\mathrel{\,\,\preceq\,\,}}

\newtheorem{numbered}{}
\swapnumbers
\newtheorem{theorem}[numbered]{Theorem}
\newtheorem{lemma}[numbered]{Lemma}
\newtheorem{remark}[numbered]{Remark}
\newtheorem{proposition}[numbered]{Proposition}
\newtheorem{corollary}[numbered]{Corollary}
\theoremstyle{definition}
\newtheorem{definition}[numbered]{Definition}

\begin{document}

\maketitle

\begin{abstract}
  We investigate the injective types and the algebraically injective
  types in univalent mathematics, both in the absence and in the
  presence of propositional resizing. Injectivity is defined by the
  surjectivity of the restriction map along any embedding, and
  algebraic injectivity is defined by a given section of the
  restriction map along any embedding. Under propositional resizing
  axioms, the main results are easy to state: (1)~Injectivity is
  equivalent to the propositional truncation of algebraic injectivity.
  (2)~The algebraically injective types are precisely the retracts of
  exponential powers of universes. (2a)~The algebraically injective
  sets are precisely the retracts of powersets. (2b)~The algebraically
  injective \m{(n+1)}-types are precisely the retracts of exponential
  powers of universes of \m{n}-types. (3)~The algebraically injective
  types are also precisely the retracts of algebras of the partial-map
  classifier. From~(2) it follows that any universe is embedded as a
  retract of any larger universe.  In the absence of propositional
  resizing, we have similar results that have subtler statements which
  need to keep track of universe levels rather explicitly, and are
  applied to get the results that require resizing.

  \medskip \noindent {\bf Keywords.} Injective type, flabby type, Kan extension, partial-map classifier, univalent mathematics, univalence axiom.

  \medskip \noindent {\bf MSC 2010.} 03B15, 03B35, 03G30, 18A40, 18C15.
\end{abstract}

\section{Introduction}

We investigate the injective types and the algebraically injective
types in univalent mathematics, both in the absence and in the
presence of propositional resizing axioms. These notions of
injectivity are about the extension problem
\begin{diagram}
  X & & \rInto^j & & Y  \\
  & \rdTo_f &  & \ldEto & \\
  & & D. & &
\end{diagram}
The injectivity of a type \m{D:\U} is defined by the surjectivity
of the restriction map \m{(-) \comp j} along any embedding~\m{j}:
\M{ \Pi(X,Y : \U)\, \Pi (j : X \emb Y)\, \Pi(f : X \to D)\, \exists (g :
  Y \to D)\, g \comp j = f,
}
so that we get an \emph{unspecified} extension~\m{g} of~\m{f}
along~\m{j}.
The algebraic injectivity of \m{D} is defined by a given section
\m{(-) \mid j} of the restriction map \m{(-) \comp j}, following Bourke's
terminology~\cite{bourke:2017}. By \m{\Sigma{-}\Pi}-distributivity,
this amounts to
\M{
  \Pi(X,Y : \U)\,
  \Pi (j : X \emb Y)\, \Pi(f : X \to D)\, \Sigma (f \mid j : Y \to D),
  f \mid j \comp j = f,
}
so that we get a \emph{designated} extension~\m{f \mid j} of~\m{f}
along~\m{j}.  Formally, in this definition, \m{f \mid j} can be
regarded as a variable, but we instead think of the symbol ``\m{\mid}'' as
a binary operator.

For the sake of generality, we work without assuming or rejecting the
principle of excluded middle, and hence without assuming the axiom of
choice either. Moreover, we show that the principle of excluded middle
holds if and only if all pointed types are algebraically injective,
and, assuming resizing, if and only if all inhabited types are
injective, so that there is nothing interesting to say about
(algebraic) injectivity in its presence. That pointness and
inhabitedness are needed is seen by considering the embedding \m{\Zero
  \emb \One}.

Under propositional resizing principles~\cite{hottbook}
(Definitions~\ref{resizing} and~\ref{omega:resizing} below), the main
results are easy to state:
\begin{enumerate}
\item Injectivity is equivalent to the propositional truncation of
  algebraic injectivity.

  (This can be seen as a form of choice that
  just holds, as it moves a propositional truncation inside a
  \m{\Pi}-type to outside the \m{\Pi}-type, and may be related to
  \cite{kenney:2011}.)
   \item The algebraically injective types are precisely the retracts of
     exponential powers of universes. Here by an exponential power of a type \m{B} we mean a type of the form \m{A \to B}, also written \m{B^A}.

     In particular,
       \begin{enumerate}
       \item The algebraically injective sets are precisely the
         retracts of powersets.

       \item The algebraically injective \m{(n+1)}-types are precisely
         retracts of exponential powers of the universes of \m{n}-types.
       \end{enumerate}
     Another consequence is that any universe is embedded as a retract of any
     larger universe.
   \item The algebraically injective types are also precisely the
       underlying objects of the algebras of the partial-map
       classifier.
\end{enumerate}
In the absence of propositional resizing, we have similar results
that have subtler statements that need to keep track of universe
levels rather explicitly.
Most constructions developed in this paper are in the absence of
propositional resizing. We apply them, with the aid of a notion of
algebraic flabbiness, which is related to the partial-map classifier,
to derive the results that rely on resizing mentioned above.

\paragraph{Acknowledgements.} Mike Shulman has acted as a sounding
board over the years, with many helpful remarks, including in
particular the suggestion of the terminology \emph{algebraic
  injectivity} from~\cite{bourke:2017} for the notion we consider
here.

\section{Underlying formal system} \label{foundations}

Our handling of universes has a model in \m{\infty}-toposes following
Shulman~\cite{2019arXiv190407004S}. It differs from that of the HoTT
book~\cite{hottbook}, and Coq~\cite{coq}, in that we don't assume
cumulativity, and it agrees with that of Agda~\cite{agda}.

\subsection{Our univalent type theory}

Our underlying formal system can be considered to be a subsystem of that
used in UniMath~\cite{unimath}.
\begin{enumerate}
\item We work within an intensional Martin-L\"of type theory with
  types \m{\Zero} (empty type), \m{\One} (one-element type with
  \m{\one:\One}), \m{\N} (natural numbers), and type formers \m{+}
  (binary sum), \m{\Pi} (product), \m{\Sigma} (sum) and \m{\Id}
  (identity type), and a hierarchy of type universes ranged over by
  \m{\U,\V,\W,\T}, closed under them in a suitable sense discussed
  below.

  We take these as required closure properties of our formal system,
  rather than as an inductive definition.

\item We assume a universe \m{\U_0}, and for each universe \m{\U} we
  assume a successor universe \m{\U^+} with \m{\U : \U^+}, and for any
  two universes \m{\U,\V} a least upper bound \m{\U \sqcup \V}. We
  stipulate that we have \m{\U_0 \sqcup \U = \U} and \m{\U \sqcup \U^+
    = \U^+} definitionally, and that the operation \m{(-)\sqcup(-)} is
  definitionally idempotent, commutative, and associative, and that
  the successor operation \m{(-)^+} distributes over \m{(-)\sqcup(-)}
  definitionally.

\item We don't assume that the universes are cumulative on the nose,
  in the sense that from \m{X : \U} we would be able to deduce that
  \m{X : \U \sqcup \V} for any \m{\V}, but we also don't assume that
  they are not. However, from the assumptions formulated below, it
  follows that for any two universes \m{\U,\V} there is a map
  \m{\lift_{\U,\V} : \U \to \U \sqcup \V}, for instance \m{X \mapsto X
    + \Zero_\V}, which is an embedding with \m{\lift X \simeq X} if
  univalence holds (we cannot write the identity type \m{\lift X = X},
  as the left- and right-hand sides live in the different types \m{\U}
  and \m{\U \sqcup \V}, which are not (definitionally) the same in
  general).

\item We stipulate that we have copies \m{\Zero_\U} and \m{\One_\V} of the
  empty and singleton types in each universe \m{\U} (with the subscripts
  often elided).
\item We stipulate that if \m{X : \U} and \m{Y : \V}, then \m{X+Y : \U \sqcup \V}.
\item We stipulate that if \m{X : \U} and \m{A : X \to \V} then
  \m{\Pi_X A : \U \sqcup \V}. We abbreviate this product type as \m{\Pi A}
  when \m{X} can be inferred from \m{A}, and sometimes we write it
  verbosely as \m{\Pi (x:X), A \, x}.

  In particular, for types \m{X : \U} and \m{Y : \V}, we have the function
  type \m{X \to Y : \U \sqcup \V}.
\item The same type stipulations as for \m{\Pi}, and the same
  grammatical conventions apply to the sum type former \m{\Sigma}.

  In particular, for types \m{X : \U} and \m{Y : \V}, we have the cartesian product \m{X \times Y : \U \sqcup \V}.

\item We assume the \m{\eta} rules for \m{\Pi} and \m{\Sigma}, namely
  that \m{f = \lambda x, f \, x} holds definitionally for any \m{f} in
  a \m{\Pi}-type and that \m{z=(\fst z , \snd z)} holds definitionally
  for any \m{z} in a \m{\Sigma} type, where \m{\fst} and \m{\snd} are
  the projections.

\item For a type \m{X} and points \m{x,y:X}, the identity type \m{\Id_{X} x \, y} is abbreviated as \m{\Id x \, y} and often written \m{x \idto_X y} or simply \m{x \idto y}.

  The elements of the identity type \m{x=y} are called identifications
  or paths from \m{x} to~\m{y}.

\item When making definitions, definitional equality is written ``$\eqdef$''. When it is invoked, it is written e.g.\ ``\m{x = y} definitionally''. This is consistent with the fact that any definitional equality \m{x = y} gives rise to an element of the identity type \m{x = y} and should therefore be unambiguous.

\item When we say that something is the case by construction, this means we
  are expanding definitional equalities.

\item We tacitly assume univalence~\cite{hottbook}, which gives
  function extensionality (pointwise equal functions are equal) and
  propositional extensionality (logically equivalent subsingletons are
  equal).

\item We work with the existence of propositional, or subsingleton, truncations as an
  assumption, also tacit.  The HoTT book~\cite{hottbook}, instead,
  defines type formation \df{rules} for propositional truncation as a
  syntactical construct of the formal system. Here we take
  propositional truncation as an axiom for any pair of
  universes \m{\U,\V}: \db{
    \begin{align*}
      \Pi (X:\U)\,  \Sigma & (\trunc{X}  : \U), \\
      & \mathrel{\phantom{\times}} \text{\m{\trunc{X}} is a proposition} \times (X \to \trunc{X}) \\
  & \times \bracket{\Pi (P : \V), \text{\m{P} is a proposition} \to (X \to P) \to \trunc{X} \to P}.
    \end{align*}}

  We write \m{\mid x \mid} for the insertion of \m{x:X} into the type
  \m{\trunc{X}} by the assumed function \m{X \to \trunc{X}}.  We also
  denote by \m{\bar{f}} the function \m{\trunc{X} \to P} obtained by
  the given ``elimination rule'' \m{(X \to P) \to \trunc{X} \to P}
  applied to a function \m{f:X \to P}. The universe \m{\U} is that of
  types we truncate, and \m{\V} is the universe where the propositions
  we eliminate into live.  Because the existence of propositional
  truncations is an assumption rather than a type formation rule, its
  so-called ``computation'' rule \M{\bar{f} \mid x \mid = f x} doesn't
  hold definitionally, of course, but is established as a derived
  identification, by the definition of proposition.

\end{enumerate}

\subsection{Terminology and notation}
\label{existence:terminology}

We assume that the readers are already familiar with the notions of
univalent mathematics, e.g.\ from the HoTT book~\cite{hottbook}. The
purpose of this section is to establish terminology and notation only,
particularly regarding our modes of expression that diverge from the HoTT
book.
\begin{enumerate}
\item A type \m{X} is a singleton, or contractible, if there is a
  designated \m{c:X} with \m{x = c} for all \m{x:X}:
  \M{
    \text{\m{X} is a singleton} \eqdef \Sigma (c : X), \Pi (x:X), x = c.
  }

\item A proposition, or subsingleton, or truth value, is a type with
  at most one element, meaning that any two of its elements are
  identified:
  \M{
  \text{\m{X} is a proposition} \eqdef \Pi(x,y:X), x=y.
  }

\item
  By an unspecified element of a type \m{X} we mean a (specified)
  element of its propositional truncation~\m{\trunc{X}}.

  We say that a type is inhabited if it has an unspecified element.

  If the type \m{X} codifies a mathematical statement, we say that
  \m{X} holds in an unspecified way to mean the assertion
  \m{\trunc{X}}. For example, if we say that the type \m{A} is a
  retract of the type \m{B} in an unspecified way, what we mean is that
  \m{\trunc{\text{\m{A} is a retract of \m{B}}}}.

\item Phrases such as ``there exists'', ``there is'', ``there is some'', ``for some'' etc.\ indicate a propositionally truncated \m{\Sigma}, and symbolically we write  \M{(\exists (x:X), A \, x) \eqdef  \trunc{\Sigma (x:X), A \, x}.}
For emphasis, we may say that there is an unspecified \m{x:X} with \m{A\,x}.

  When the meaning of existence is intended to be (untruncated)
  \m{\Sigma}, we use phrases such as ``there is a designated'', ``there
  is a specified'', ``there is a distinguished'', ``there is a given'', ``there is a chosen'', ``for some chosen'', ``we can find'' etc.

  The statement that there is a unique \m{x:X} with \m{A \, x} amounts to
  the assertion that the type \m{\Sigma (x:X), A \, x} is a singleton:
  \M{
    (\exists! (x:X), A \, x) \eqdef \text{the type \m{\Sigma (x:X), A \, x} is a singleton}.
  }
  That is, there is a unique pair \m{(x,a)} with \m{x:X} and \m{a : A\,
  x}. This doesn't need to be explicitly propositionally truncated,
  because singleton types are automatically propositions.

  The statement that there is at most one \m{x:X} with \m{A \, x}
  amounts to the assertion that the type \m{\Sigma (x:X), A \, x} is a
  subsingleton (so we have at most one pair \m{(x,a)} with \m{x:X} and
  \m{a : A\, x}).

\item We often express a type of the form \m{\Sigma(x:X), A \, x} by
  phrases such as ``the type of \m{x:X} with \m{A \, x}''.

  For example, if we define the fiber of a point \m{y:Y} under a
  function \m{f : X \to Y} to be the type \m{f^{-1}(y)} of points \m{x:X}
  that are mapped by \m{f} to a point identified with \m{y}, it
  should be clear from the above conventions that we mean
  \M{
   f^{-1}(y) \eqdef \Sigma (x : X), f x = y.
  }
 Also, with the above terminological conventions, saying that the
 fibers of \m{f} are singletons (that is, that \m{f} is an equivalence)
 amounts to the same thing as saying that for every \m{y:Y} there is a
 unique \m{x:X} with \m{f(x)=y}.

 Similarly, we say that such an \m{f} is an embedding if for every
 \m{y:Y} there is at most one \m{x:X} with \m{f(x)=y}. In passing, we
 remark that, in general, this is stronger than \m{f} being
 left-cancellable, but coincides with left-cancellability if the type
 \m{Y} is a set (its identity types are all subsingletons).

\item We sometimes use the mathematically more familiar ``maps to''
  notation~\m{\mapsto} instead of type-theoretical lambda notation
  \m{\lambda} for defining nameless functions.

\item Contrarily to an existing convention among some practitioners,
  we will not reserve the word \df{is} for mathematical statements
  that are subsingleton types. For example, we say that a type is
  algebraically injective to mean that it comes equipped with suitable
  data, or that a type \m{X} is a retract of a type \m{Y} to mean that
  there are designated functions \m{s : X \to Y} and \m{r : Y \to X},
  and a designated pointwise identification \m{r \comp s \sim \id}.

\item Similarly, we don't reserve the words \df{theorem}, \df{lemma},
  \df{corollary} and \df{proof} for constructions of elements of
  subsingleton types, and all our constructions are indicated by the
  word proof, including the construction of data or structure.

  Because \df{proposition} is a semantical rather than syntactical
  notion in univalent mathematics, we often have situations when we
  know that a type is a proposition only much later in the
  mathematical development. An example of this is univalence. To know
  that this is a proposition, we first need to state and prove many
  lemmas, and even if these lemmas are propositions themselves, we
  will not know this at the time they are stated and proved. For
  instance, knowing that the notion of being an equivalence is a
  proposition requires function extensionality, which follows from
  univalence. Then this is used to prove that univalence is a
  proposition.

\end{enumerate}

\subsection{Formal development}

A computer-aided formal development of the material of this paper has
been performed in Agda~\cite{agda}, occasionally preceded by pencil
and paper scribbles, but mostly directly in the computer with the aid
of Agda's interactive features. This paper is an unformalization of
that development. We emphasize that not only numbered statements in
this paper have formal counterparts, but also the comments in passing,
and that the formal version has more information than what we
choose to report here.

We have two versions. One of them~\cite{injective:blackboard} is in
\df{blackboard style}, with the ideas in the order they have come to
our mind over the years, in a fairly disorganized way, and with local
assumptions of univalence, function extensionality, propositional
extensionality and propositional truncation. The other
one~\cite{injective:article} is in \df{article style}, with univalence
and existence of propositional truncations as global assumptions, and
functional and propositional extensionality derived from
univalence. This second version follows closely this paper (or rather
this paper follows closely that version), organized in a way more
suitable for dissemination, repeating the blackboard definitions, in a
definitionally equal way, and reproducing the proofs and constructions
that we consider to be relevant while invoking the blackboard for the
routine, unenlightening ones. The blackboard version also has
additional information that we have chosen not to include in the
article version of the Agda development or this paper.

An advantage of the availability of a formal version is that, whatever
steps we have omitted here because we considered them to be
obvious or routine, can be found there, in case of doubt.


\section{Injectivity with universe levels}

As discussed in the introduction, in the absence of propositional
resizing we are forced to keep track of universe levels rather
explicitly.
\begin{definition}
We say that a type \m{D} in a universe \m{\W} is \df{\m{\U,\V}-injective}
to mean
\M{ \Pi(X : \U)\, \Pi(Y : \V)\, \Pi (j : X \emb Y)\, \Pi(f : X \to D),\, \exists (g :
  Y \to D), g \comp j \sim f,
}
and that it is \df{algebraically \m{\U,\V}-injective} to mean
\M{
  \Pi(X : \U) \,\Pi(Y : \V)\,
  \Pi (j : X \emb Y) \, \Pi(f : X \to D) ,\, \Sigma (f \mid j : Y \to D),
  f \mid j \comp j \sim f.
}
\end{definition}
\noindent Notice that, because we have function extensionality, pointwise
equality~\m{\sim} of functions is equivalent to equality, and
hence equal to equality by univalence. But it is more convenient for
the purposes of this paper to work with pointwise equality in these
definitions.

\section{The algebraic injectivity of universes}

Let \m{\U,\V,\W} be universes, \m{X:\U} and \m{Y : \V} be types, and
\m{f : X \to \W} and \m{j : X \to Y} be given functions, where \m{j} is not necessarily an embedding.
  We define functions \m{f \edown j} and \m{f \eup j} of type \m{Y \to \U \sqcup \V \sqcup \W}
  by
  \db{\begin{eqnarray*}
    (f \edown j) \, y & \eqdef & \Sigma (w : j^{-1}(y)), f(\fst w), \\
    (f \eup j) \, y & \eqdef & \Pi (w : j^{-1}(y)), f(\fst w).
  \end{eqnarray*}}

\begin{lemma}
    If \m{j} is an embedding, then both \m{f \edown j} and \m{f \eup j} are extensions of \m{f} along~\m{j} up to equivalence, in the sense that \M{(f \edown j \comp j) \, x \simeq f x \simeq (f \eup j \comp j) \, x,}
and hence extensions up to equality if \m{\W} is taken to be \m{\U \sqcup \V}, by univalence.
\end{lemma}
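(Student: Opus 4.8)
The plan is to compute the fiber of $j$ over a point of the form $j\,x$ and observe that, since $j$ is an embedding, this fiber is contractible; the two displayed equivalences then follow by transporting the definitions of $f\edown j$ and $f\eup j$ along this contractibility. Concretely, fix $x : X$. By definition $(f\edown j)\,(j\,x) \eqdef \Sigma(w : j^{-1}(j\,x)),\, f(\fst w)$ and $(f\eup j)\,(j\,x) \eqdef \Pi(w : j^{-1}(j\,x)),\, f(\fst w)$. Because $j$ is an embedding, all its fibers are propositions; and the fiber $j^{-1}(j\,x)$ moreover has the point $(x, \refl)$, so it is an inhabited proposition, hence a singleton (contractible) with center $(x,\refl)$.

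The key step is then the standard fact that summing or producting a type family over a contractible base recovers the fiber of the family over the center: if $C$ is contractible with center $c$ and $P : C \to \U\sqcup\V\sqcup\W$, then $\Sigma(z : C),\, P\,z \simeq P\,c$ and $\Pi(z : C),\, P\,z \simeq P\,c$. Applying this with $C \eqdef j^{-1}(j\,x)$, center $c \eqdef (x,\refl)$, and $P \eqdef \lambda w.\, f(\fst w)$, both $\Sigma$ and $\Pi$ forms evaluate, up to equivalence, to $P\,(x,\refl) = f(\fst(x,\refl)) = f\,x$ (the last equality holding definitionally by the $\eta$-rule for $\Sigma$). This yields $(f\edown j\comp j)\,x \simeq f\,x$ and $(f\eup j\comp j)\,x \simeq f\,x$, i.e.\ the chain $(f\edown j\comp j)\,x \simeq f\,x \simeq (f\eup j\comp j)\,x$.

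For the final clause, if we instantiate $\W \eqdef \U\sqcup\V$ then $f\edown j$, $f\eup j$, and $f$ all land in $\U\sqcup\V$ (since $\U\sqcup\V\sqcup(\U\sqcup\V) = \U\sqcup\V$ by idempotence of $\sqcup$), so the pointwise equivalences just obtained can be converted to pointwise identifications $(f\edown j\comp j)\,x = f\,x = (f\eup j\comp j)\,x$ by univalence, as claimed.

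I do not expect a genuine obstacle here: the only thing to be careful about is bookkeeping of universe levels in the statement of the ``$\Sigma/\Pi$ over a contractible base'' lemma (it must be stated for a family valued in $\U\sqcup\V\sqcup\W$, matching the codomain of $f\edown j$ and $f\eup j$), and making sure the contractibility of $j^{-1}(j\,x)$ is invoked from the hypothesis that $j$ is an embedding in the sense defined earlier (at most one $x$ in each fiber) together with the witnessing point $(x,\refl)$. Everything else is a routine unfolding of definitions.
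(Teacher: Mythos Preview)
Your proof is correct and follows essentially the same route as the paper's. The paper's proof simply says that a sum (respectively product) indexed by a subsingleton is equivalent to any of its summands (respectively factors), and that embeddings are characterized by having subsingleton fibers; your version unpacks this by noting that the fiber \m{j^{-1}(j\,x)} is moreover pointed by \m{(x,\refl)} and hence contractible, which is exactly what makes ``equivalent to any of its summands/factors'' go through.
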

\noindent Notice that if \m{\W} is kept arbitrary, then univalence cannot be applied because equality is defined only for elements of the same type.
\begin{proof}
Because a sum indexed by a subsingleton is equivalent to any of its
summands, and similarly a product indexed by a subsingleton is equivalent to
any of its factors, and because a map is an embedding precisely when
its fibers are all subsingletons.
\end{proof}
\noindent We record this corollary:
\begin{lemma} \label{ref:16:1}
  The universe \m{\U \sqcup \V} is algebraically \m{\U,\V}-injective, in at least two ways.
\end{lemma}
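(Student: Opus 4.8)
The plan is to instantiate the preceding lemma with the target universe $\W$ taken to be $\U \sqcup \V$, so that the two extension operators $(-)\edown(-)$ and $(-)\eup(-)$ produce functions into $\U \sqcup \V$ of exactly the right type, and then to upgrade the pointwise equivalences supplied by that lemma to pointwise identifications by univalence. Concretely, we must construct, given $X : \U$, $Y : \V$, an embedding $j : X \emb Y$ and a map $f : X \to \U \sqcup \V$, a function $f \mid j : Y \to \U \sqcup \V$ together with a pointwise identification $f \mid j \comp j \sim f$, and we will do this in two ways.

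For the first witness, set $f \mid j \eqdef f \edown j$. Since we are now in the case $\W \eqdef \U \sqcup \V$, the codomain $\U \sqcup \V \sqcup \W$ of $f \edown j$ collapses definitionally to $\U \sqcup \V$ by idempotency of $(-)\sqcup(-)$, so $f \mid j : Y \to \U \sqcup \V$ as required. By the preceding lemma, for each $x : X$ we have an equivalence $(f \edown j \comp j)\, x \simeq f\, x$ between two elements of $\U \sqcup \V$, and univalence converts this equivalence into an identification $(f \edown j \comp j)\, x = f\, x$; this is exactly the datum $f \mid j \comp j \sim f$. The second witness is obtained identically, using $f \eup j$ in place of $f \edown j$ and the other half of the equivalence chain in the preceding lemma; this is what is meant by ``in at least two ways''.

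There is no real obstacle here: the argument is pure bookkeeping on top of the preceding lemma. The one point that needs attention is that the definitional laws governing $(-)\sqcup(-)$ — in particular its idempotency — are precisely what make $\U \sqcup \V \sqcup \W$ reduce to $\U \sqcup \V$ when $\W \eqdef \U \sqcup \V$, so that the two operators genuinely land in the very type we are asserting to be injective; without this the statement would not even typecheck. I would also remark in passing that, since algebraic $\U,\V$-injectivity is structure rather than a proposition, asserting it ``in two ways'' is meaningful, and that the two witnesses just constructed are not claimed (nor expected) to be equal.
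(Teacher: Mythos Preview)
Your proposal is correct and matches the paper's approach exactly: the paper presents this lemma as a direct corollary of the preceding one, with the passage to identifications via univalence already noted there (``hence extensions up to equality if \m{\W} is taken to be \m{\U \sqcup \V}, by univalence''). Your explicit remark about idempotency of \m{(-)\sqcup(-)} making the universe levels line up is a helpful elaboration of a point the paper leaves implicit.
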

\noindent And in particular, e.g.\ \m{\U} is \m{\U,\U}-injective, but of course
\m{\U} doesn't live in \m{\U} and doesn't even have a copy in
\m{\U}. For the following, we say that \m{y : Y} is not in the image
of \m{j} to mean that \m{j \, x \ne y} for all \m{x:X}.
\begin{proposition}
        For \m{y:Y} not in the image of \m{j}, we have
        \m{(f \edown j) \, y \simeq \Zero} and
        \m{(f \eup j) \, y \simeq \One}.
  \end{proposition}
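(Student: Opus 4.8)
The plan is to observe that the hypothesis that \m{y} is not in the image of \m{j} says precisely that the fiber \m{j^{-1}(y)} is empty, and then to feed this into the two definitions, using that a \m{\Sigma}-type indexed by an empty type is empty while a \m{\Pi}-type indexed by an empty type is a singleton.

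First I would establish \m{j^{-1}(y) \simeq \Zero}. Recall \m{j^{-1}(y) \eqdef \Sigma (x : X), j\,x = y}; given any element \m{(x,p)} of this type, the path \m{p : j\,x = y} contradicts the assumption \m{j\,x \ne y}, so we obtain a map \m{j^{-1}(y) \to \Zero}, and any type admitting a map to \m{\Zero} is equivalent to \m{\Zero} (the map \m{\Zero \to j^{-1}(y)} is the unique one, and the two composites are the identities, pointwise and hence by function extensionality, since \m{\Zero} has no points).

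Next, for the \m{\edown} case: \m{(f \edown j)\,y \eqdef \Sigma (w : j^{-1}(y)), f(\fst w)}; since the index type is equivalent to \m{\Zero}, the whole \m{\Sigma} is equivalent to \m{\Sigma (w : \Zero), \ldots}, which is itself equivalent to \m{\Zero} because its first projection is a map into \m{\Zero}. For the \m{\eup} case: \m{(f \eup j)\,y \eqdef \Pi (w : j^{-1}(y)), f(\fst w)}, and again replacing the index type by \m{\Zero} yields an equivalent type \m{\Pi (w : \Zero), \ldots}; this last type is contractible — it is inhabited by the unique function out of \m{\Zero}, and any two of its elements are equal by function extensionality since there is nothing to check over the empty domain — and a contractible type is equivalent to \m{\One}.

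I do not expect any genuine obstacle here. The only steps requiring a little care are transporting the index type through the equivalence \m{j^{-1}(y) \simeq \Zero} inside the \m{\Sigma}- and \m{\Pi}-types, which is the standard fact that an equivalence of base types induces an equivalence of the associated total sums and products, and the appeal to function extensionality to see that the empty product is a proposition (and, being pointed, therefore a singleton).
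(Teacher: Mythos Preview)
Your argument is correct and is exactly the obvious unpacking the paper has in mind; the paper does not even write out a proof for this proposition, treating it as immediate from the definitions together with the facts that a \m{\Sigma} over an empty index type is empty and a \m{\Pi} over an empty index type is contractible.
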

\noindent With excluded middle, this would give that the two extensions have
the same sum and product as the non-extended map, respectively, but
excluded middle is not needed, as it is not hard to see:
\begin{remark} We have canonical equivalences
\m{\Sigma f \simeq \Sigma (f \edown j)} and
\m{\Pi f \simeq \Pi (f \eup j)}.
\end{remark}
Notice that the functions \m{f}, \m{f \edown j} and \m{f \eup j},
being universe valued, are type families, and hence the notations
\m{\Sigma f}, \m{\Sigma(f \edown j)}, \m{\Pi f} and \m{\Pi(f \eup j)}
are just particular cases of the notations for the sum and product of
a type family.

The two extensions are left and right Kan extensions in the following
sense, without the need to assume that \m{j} is an embedding. First, a
map \m{f:X \to \U}, when \m{X} is viewed as an \m{\infty}-groupoid and
hence an \m{\infty}-category, and when \m{\U} is viewed as the
\m{\infty}-generalization of the category of sets, can be considered
as a sort of \m{\infty}-presheaf, because its functoriality is
automatic: If we define
\M{f [ p ] \eqdef \transport f p}
of type \m{f\, x \to f\, y} for \m{p : \Id \, x \, y}, then for
\m{q : \Id \, y \, z} we have
\M{ f [
  \refl_x ] = \id_{f \, x}, \qquad\qquad f [p \pcomp q] = f [q] \comp f
  [p].
}
Then we can consider the type of transformations between such
\m{\infty}-presheaves \m{f : X \to \W} and \m{f' : X \to \W'} defined by
\M{
  f \wps f' \eqdef \Pi (x : X), f \, x \to  f' x,
}
which are automatically natural in the sense that for all \m{\tau: f \wps f'} and \m{p : \Id \, x \, y},
\M{
  \tau_y \comp f [ p ] = f' [p] \comp \tau_x.
}

It is easy to check that we have the following canonical transformations:
\begin{remark}
  \m{f \edown j \wps f \eup j} if \m{j} is an embedding.
\end{remark}
It is also easy to see that, without assuming \m{j} to be an embedding,
  \begin{enumerate}
  \item \m{f \wps f \edown j \comp j},
  \item \m{f \eup j \comp j \wps f}.
  \end{enumerate}
  These are particular cases of the following constructions, which are
  evident and canonical, even if they may be a bit
  laborious:
\begin{remark} For any \m{g : Y \to \T}, we have canonical equivalences
    \begin{enumerate}
    \item \m{(f \edown j \wps g) \simeq (f \wps g \comp j),} \quad i.e.\ \m{f \edown j} is a left Kan extension,
    \item \m{(g \wps f \eup j) \simeq (g \comp j \wps f),} \quad i.e.\ \m{f \eup j} is a right Kan extension.
    \end{enumerate}
\end{remark}

We also have that the left and right Kan extension operators along an
embedding are themselves embeddings, as we now show.
\begin{theorem}
For any types \m{X,Y:\U} and any embedding \m{j : X \to Y}, left Kan extension along \m{j} is an embedding of the function type \m{X \to \U} into the function type \m{Y \to \U}.
\end{theorem}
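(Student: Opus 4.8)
The plan is to show that left Kan extension \m{j_! \eqdef (-) \edown j}, which has type \m{(X \to \U) \to (Y \to \U)} because \m{\U \sqcup \U \sqcup \U = \U}, is an embedding by way of the standard characterization that a map of types is an embedding precisely when it induces equivalences on all identity types. So I would fix \m{f,f' : X \to \U} and set out to prove that \m{\ap_{j_!} : (f \idto f') \to (j_! f \idto j_! f')} is an equivalence. My strategy is to first construct, by a separate chain of equivalences, some equivalence \m{\theta : (f \idto f') \simeq (j_! f \idto j_! f')}, and then to verify that \m{\theta} coincides with \m{\ap_{j_!}}; the second half is genuinely needed, since the mere existence of an abstract equivalence between the two identity types would not by itself make \m{j_!} an embedding.

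To construct \m{\theta}: function extensionality and univalence applied pointwise give \m{(f \idto f') \simeq \Pi (x:X), (f\,x \simeq f'\,x)} and \m{(j_! f \idto j_! f') \simeq \Pi (y:Y), (j_! f\,y \simeq j_! f'\,y)}. Reindexing the former along the canonical equivalence \m{X \simeq \Sigma (y:Y), j^{-1}(y)} (valid for any \m{j}, sending \m{x} to \m{(j\,x,(x,\refl))}) rewrites it as \m{\Pi (y:Y)\,\Pi (w:j^{-1}(y)), (f(\fst w) \simeq f'(\fst w))}. It then suffices to give, for each \m{y:Y}, an equivalence between \m{\Pi (w:j^{-1}(y)), (f(\fst w) \simeq f'(\fst w))} and \m{j_! f\,y \simeq j_! f'\,y}, which by definition of \m{(-)\edown j} is \m{\Sigma (w:j^{-1}(y)), f(\fst w) \simeq \Sigma (w:j^{-1}(y)), f'(\fst w)}. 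This is the only point where I use that \m{j} is an embedding: then \m{j^{-1}(y)} is a proposition, and I would prove the auxiliary fact that for any proposition \m{P} and families \m{A,B : P \to \U}, the type \m{\Pi (w:P), (A\,w \simeq B\,w)} is equivalent to \m{\Sigma_P A \simeq \Sigma_P B} --- the forward map sending a fibrewise family of equivalences to the induced equivalence of total spaces, and the backward map contracting \m{P} at a given point \m{w} to turn \m{\Sigma_P A \simeq \Sigma_P B} into \m{A\,w \simeq B\,w}. Composing everything produces \m{\theta}. A more abstract alternative would instead assemble \m{\theta} from the adjunction already recorded, \m{(j_! f \wps j_! f') \simeq (f \wps j_! f' \comp j)}, followed by the natural equivalence \m{j_! f' \comp j \simeq f'} coming from the preceding lemma, once one has checked that this chain preserves and reflects the property of a transformation being a pointwise equivalence --- which again reduces to \m{X \simeq \Sigma (y:Y), j^{-1}(y)} together with the fact that a fibrewise map over a fixed base is an equivalence iff it is so fibrewise.

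Finally I would prove \m{\theta = \ap_{j_!}}. By path induction this reduces to \m{\theta(\refl_f) = \refl_{j_! f}}, which I would establish by tracing \m{\refl_f} through the chain defining \m{\theta} and noting that every constituent equivalence carries identity data to identity data: function extensionality sends \m{\refl} to the constant homotopy, the pointwise identity-to-equivalence maps send that to the fibrewise identity equivalences, the reindexing preserves this, and the forward map of the proposition lemma takes the fibrewise-identity family to the identity equivalence of \m{j_! f\,y}. I expect the real obstacle to be not any single step but the cumulative bookkeeping --- in particular, keeping the transports hidden inside the proposition lemma under control while verifying its two round-trips (which rely on every path in a proposition being \m{\refl}) and while carrying out the \m{\refl}-to-\m{\refl} computation; with the formal development available these are routine.
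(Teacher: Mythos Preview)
Your argument is correct but takes a genuinely different route from the paper's. You show that \m{j_! \eqdef (-)\edown j} is an embedding by verifying directly that \m{\ap_{j_!}} is an equivalence on identity types: unwind \m{(f=f')} and \m{(j_! f = j_! f')} pointwise via function extensionality and univalence, reindex along \m{X \simeq \Sigma(y:Y),\, j^{-1}(y)}, bridge the two sides at each \m{y} with your proposition lemma \m{\Pi(w:P),(A\,w\simeq B\,w)\,\simeq\,(\Sigma_P A \simeq \Sigma_P B)}, and then --- you rightly flag this step as essential, since an abstract equivalence of identity types would not suffice --- identify the composite \m{\theta} with \m{\ap_{j_!}} by path induction to \m{\refl}. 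The paper instead factors \m{s \eqdef (-)\edown j} as an equivalence followed by a subtype inclusion: it defines \m{M \subseteq (Y\to\U)} to consist of those \m{g} for which the canonical map \m{\kappa\,g\,y : ((g\comp j)\edown j)\,y \to g\,y} is an equivalence at every \m{y}, constructs an equivalence \m{\phi : (X\to\U) \to M} with inverse \m{r = (-)\comp j}, and checks that \m{\fst \comp \phi = s} definitionally, so that \m{s} is the composite of an equivalence with the embedding \m{\fst : M \to (Y\to\U)}. The paper's route yields, as a by-product, an explicit description of the essential image of left Kan extension and sidesteps your \m{\theta=\ap_{j_!}} bookkeeping; your route stays closer to the definition of embedding via \m{\ap}, isolates precisely where the subsingleton-fibre hypothesis on \m{j} is used (only in the proposition lemma), and transfers to the right-Kan case by swapping in the evident \m{\Pi}-analogue of that lemma.
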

\begin{proof}
  Define \m{s : (X \to \U) \to (Y \to \U)} and \m{r : (Y \to \U) \to (X \to \U)} by
  \M{
    \begin{array}{lll}
      s \, f & \eqdef & f \edown j, \\
      r \,  g & \eqdef & g \comp j.
    \end{array}
  }
  By function extensionality, we have that \m{r (s \, f) = f}, because
  \m{s} is a pointwise-extension operator as \m{j} is an embedding,
  and by construction we have that \m{s (r \, g) = (g \comp j) \edown
    j}. Now define \m{\kappa : \Pi (g : Y \to \U), s(r \,g) \wps
    g} by
  \M{
    \kappa \, g \, y \, ((x , p) , C) \eqdef \transport \, g \, p \, C
  }
  for all \m{g : Y \to \U}, \m{y : Y}, \m{x : X}, \m{p : j \, x = y} and \m{C : g(j \, x)}, so that
  \m{\transport \, g \, p \, C} has type \m{g \, y },
  and consider the type
  \M{
    M \eqdef \Sigma (g : Y \to \U)\,\Pi(y:Y), \text{the map \m{\kappa \, g \, y : s (r \, g) \, y \to g \, y} is an equivalence.}
  }
  Because the notion of being an equivalence is a proposition and
  because products of propositions are propositions, the first projection
  \M{\fst : M \to (Y \to \U)} is an embedding.  To complete the proof,
  we show that there is an equivalence \m{\phi : (X \to \U) \to M}
  whose composition with this projection is \m{s}, so that \m{s},
  being the composition of two embeddings, is itself an embedding.  We
  construct \m{\phi} and its inverse \m{\gamma} by
  \M{
    \begin{array}{lll}
      \phi \, f & \eqdef & (s f , \varepsilon \, f), \\
      \gamma \, (g , e) & \eqdef & r \, g,
    \end{array}
  }
  where \m{\varepsilon \, f} is a proof that the map \m{\kappa \, (s
    f) \, y} is an equivalence for every \m{y : Y}, to be constructed
  shortly.  Before we know this construction, we can see that \m{\gamma
    (\phi \, f) = r (s \, f) = f} so that \m{\gamma \comp \phi \sim
    \id}, and that \m{\phi (\gamma (g , e)) = (s(r g) , \varepsilon (r
    g))}.  To check that the pairs \m{(s(r g) , \varepsilon (r g))}
  and \m{(g , e)} are equal and hence \m{\phi \comp \gamma \sim \id},
  it suffices to check the equality of the first components, because
  the second components live in subsingleton types.  But \m{e \, y}
  says that \m{s (r \, g) \, y \simeq g \, y} for any \m{y:Y}, and
  hence by univalence and function extensionality, \m{s (r \, g) = g}.
  Thus the functions \m{\phi} and \m{\gamma} are mutually
  inverse. Now, \m{\fst \comp \phi = s} definitionally using the
  $\eta$-rule for \m{\Pi}, so that indeed \m{s} is the composition of
  two embeddings, as we wanted to show.

  It remains to show that the map \m{\kappa \, (s f) \, y : s (f \, y) \to s(r(s \,
    f)) \, y} is indeed an equivalence. The domain and
  codomain of this function amount, by construction, to respectively
  \M{
    \begin{array}{lll}
      A & \eqdef & \Sigma (t : j^{-1}(y)), \Sigma (w : j^{-1}(j (\fst t))), f (\fst w)\\
      B & \eqdef & \Sigma (w : j^{-1}(y)), f(\fst w).
    \end{array}
  }
  We construct an inverse \m{\delta : B \to A} by
  \M{
    \delta \, ((x , p),C) \eqdef ((x , p) , (x , \refl_{j \, x}) , C).
  }
  It is routine to check that the functions \m{\kappa \, (s f) \, y}
  and \m{\delta} are mutually inverse, which concludes the proof.
\end{proof}

The proof of the theorem below follows the same pattern as the
previous one with some portions ``dualized'' in some sense, and so we
are slightly more economic with its formulation this time.

\begin{theorem}
For any types \m{X,Y:\U} and any embedding \m{j : X \to Y}, the right Kan extension operation along \m{j} is an embedding of the function type \m{X \to \U} into the function type \m{Y \to \U}.
\end{theorem}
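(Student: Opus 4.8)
The plan is to mirror the proof of the previous theorem, replacing the left Kan extension $f \edown j$ by the right Kan extension $f \eup j$ and dualizing each construction. First I would set $s\, f \eqdef f \eup j$ and $r\, g \eqdef g \comp j$, so that again $r(s\, f) = f$ by function extensionality, since $f \eup j$ is a pointwise extension operator when $j$ is an embedding (Lemma on extensions up to equivalence). The key is to exhibit a canonical transformation relating $g$ and $s(r\, g) = (g \comp j) \eup j$; this time it goes the other way, namely a map $\kappa\, g\, y : g\, y \to s(r\, g)\, y$, sending $C : g\, y$ to the dependent function $(x,p) \mapsto \transport\, g\, p^{-1}\, C$ (or, equivalently, $\transport\, g\, (p \pcomp \cdots)$ suitably arranged so that it lands in $g(j\, x)$), which is natural by the general naturality fact for $\wps$. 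I would then form $M \eqdef \Sigma(g : Y \to \U)\, \Pi(y:Y)$, the map $\kappa\, g\, y$ is an equivalence, and observe that $\fst : M \to (Y \to \U)$ is an embedding because being an equivalence is a proposition and $\Pi$-types of propositions are propositions.

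Next I would construct mutually inverse equivalences $\phi : (X \to \U) \to M$ and $\gamma : M \to (X \to \U)$ by $\phi\, f \eqdef (s\, f, \varepsilon\, f)$ and $\gamma\, (g,e) \eqdef r\, g$, where $\varepsilon\, f$ witnesses that $\kappa\, (s\, f)\, y$ is an equivalence for every $y$. As before, $\gamma(\phi\, f) = r(s\, f) = f$ gives $\gamma \comp \phi \sim \id$; and $\phi(\gamma(g,e)) = (s(r\, g), \varepsilon(r\, g))$ equals $(g,e)$ because the second components live in a proposition and the first components agree via $e\, y : g\, y \simeq s(r\, g)\, y$ for all $y$, hence $s(r\, g) = g$ by univalence and function extensionality. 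Since $\fst \comp \phi = s$ definitionally by the $\eta$-rule for $\Pi$, it follows that $s$ is a composite of two embeddings, hence an embedding.

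The remaining obligation, and the only part requiring real work, is to prove $\varepsilon\, f$, i.e.\ that $\kappa\, (s\, f)\, y$ is an equivalence. Unfolding definitions, its domain is $B \eqdef \Pi(w : j^{-1}(y)), f(\fst w)$ and its codomain is $A \eqdef \Pi(t : j^{-1}(y)), \Pi(w : j^{-1}(j(\fst t))), f(\fst w)$. I would construct the inverse $\delta : A \to B$ by $\delta\, \Phi\, (x,p) \eqdef \Phi\, (x,p)\, (x, \refl_{j\, x})$, dual to the $\delta$ in the previous proof, and check that $\delta$ and $\kappa\, (s\, f)\, y$ are mutually inverse. The main obstacle I anticipate is getting the transport directions and the use of the fact that $j^{-1}(y)$ is a proposition exactly right when verifying $\kappa \comp \delta \sim \id$ on the dependent-function side: one must show that for $\Phi : A$ and $(x,p), (x',p') : j^{-1}(y)$, the value $\Phi(x,p)(x',p')$ is determined by $\Phi(x,p)(x,\refl)$ transported along the (unique, since $j^{-1}(j\, x)$ is a proposition) identification between $(x',p')$-type data and $(x,\refl)$; this is where embeddinghood of $j$ is genuinely used, rather than being a routine rearrangement. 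I expect this verification, like its counterpart in the previous theorem, to be ``routine but laborious,'' so I would state it as such and defer the bookkeeping to the formalization.
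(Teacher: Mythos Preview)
Your proposal is correct and follows essentially the same approach as the paper's proof: the same $s$, $r$, $\kappa$ (with transport along $p^{-1}$), the same $M$, $\phi$, $\gamma$, and the same inverse $\delta$ given by evaluation at $(x,\refl_{j\,x})$; the only cosmetic difference is that you swap the names $A$ and $B$. Your added remark that the embedding hypothesis is genuinely used in verifying $\kappa \comp \delta \sim \id$ (via the subsingletonhood of $j^{-1}(j\,x)$) is accurate and makes explicit what the paper leaves as ``routine''.
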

\begin{proof}
  Define \m{s : (X \to \U) \to (Y \to \U)} and \m{r : (Y \to \U) \to (X \to \U)} by
  \M{
    \begin{array}{lll}
      s \, f & \eqdef & f \eup j, \\
      r \,  g & \eqdef & g \comp j.
    \end{array}
  }
  By function extensionality, we have that \m{r (s \, f) = f},
  and, by construction, \m{s (r \, g) = (g \comp j) \eup
    j}. Now define \m{\kappa : \Pi (g : Y \to \U), g \wps s(r \,g)
    } by
  \M{
    \kappa \, g \, y \, C (x , p) \eqdef \transport \, g \, p^{-1} \, C
  }
  for all \m{g : Y \to \U}, \m{y : Y}, \m{C : g \, y}, \m{x : X}, \m{p : j \, x = y}, so that
  \m{\transport \, g \, p^{-1} \, C} has type \m{g (j \, x) },
  and consider the type
  \M{
    M \eqdef \Sigma (g : Y \to \U)\,\Pi(y:Y), \text{the map \m{\kappa \, g \, y : g \, y \to s (r \, g) \,y} is an equivalence.}
  }
  Then the first projection \m{\fst : M \to (Y \to \U)} is an embedding.  To complete the proof,
  we show that there is an equivalence \m{\phi : (X \to \U) \to M}
  whose composition with this projection is \m{s}, so that it follows that \m{s}
  is an embedding.  We
  construct \m{\phi} and its inverse \m{\gamma} by
  \M{
    \begin{array}{lll}
      \phi \, f & \eqdef & (s f , \varepsilon \, f), \\
      \gamma \, (g , e) & \eqdef & r \, g,
    \end{array}
  }
  where \m{\varepsilon \, f} is a proof that the map \m{\kappa \, (s
    f) \, y} is an equivalence for every \m{y : Y}, so that \m{\phi}
  and \m{\gamma} are mutually inverse by the argument of the previous
  proof.

  To prove that the map \m{\kappa \, (s f) \, y : s(r(s \,
    f)) \, y \to s (f \, y)} is an equivalence, notice that its domain and
  codomain amount, by construction, to respectively
  \M{
    \begin{array}{lll}
      A & \eqdef & \Pi (w : j^{-1}(y)), f(\fst w), \\
      B & \eqdef & \Pi (t : j^{-1}(y)), \Pi (w : j^{-1}(j (\fst t))), f (\fst w).
    \end{array}
  }
  We construct an inverse \m{\delta : B \to A} by
  \M{
    \delta \, C \, (x , p) \eqdef C (x , p) (x , \refl_{j \, x}).
  }
  It is routine to check that the functions \m{\kappa \, (s f) \, y}
  and \m{\delta} are mutually inverse, which concludes the proof.
\end{proof}

The left and right Kan extensions trivially satisfy \m{f \edown \id \sim f}
and \m{f \eup \id \sim f} because the identity map is an embedding, by
the extension property, and so are contravariantly functorial in view
of the following.
\begin{remark} \label{iterated}
  For types \m{X : \U}, \m{Y : \V} and \m{Z : \W}, and functions \m{j : X \to Y}, \m{k : Y \to Z} and \m{f : X \to \U \sqcup \V \sqcup \W}, we have canonical identifications
  \M{
    \begin{array}{lll}
      f \edown (k \comp j)  & \sim & (f \edown j) \edown k, \\
    f \eup (k \comp j)  & \sim & (f \eup j) \eup k.
    \end{array}
  }
  %
\end{remark}
\begin{proof}
This is a direct consequence of the canonical equivalences
  \M{
    \begin{array}{lll}
      (\Sigma (t : \Sigma B) , C \, t) \simeq (\Sigma (a : A)\,
    \Sigma (b : B \, a), C(a,b)) \\
      (\Pi (t : \Sigma B) , C \, t) \simeq (\Pi (a : A)\,
    \Pi (b : B \, a), C(a,b))
    \end{array}
  }
  for arbitrary universes \m{\U,\V,\W} and \m{A:\U}, \m{B: A \to \V}, and \m{C : \Sigma \, B \to \W}.
\end{proof}

The above and the following are applied in work on
compact ordinals (reported in our repository~\cite{TypeTopology}).
\begin{remark}
  For types \m{X : \U} and \m{Y : \V}, and functions \m{j : X \to Y}, \m{f : X \to \W}
  and \m{f' : X \to \W'}, if the type \m{f \, x} is a retract of \m{f'
    \, x} for any
  \m{x:X}, then the type \m{(f \eup j) \, y} is a
  retract of \m{(f' \eup j) \, y} for any \m{y : Y}.
\end{remark}

\noindent The construction is routine, and presumably can be
performed for left Kan extensions too, but we haven't paused to check
this.

\section{Constructions with algebraically injective types}

Algebraic injectives are closed under retracts:
\begin{lemma}
  If a type \m{D} in a universe \m{\W} is algebraically
  \m{\U,\V}-injective, then so is any retract \m{D' : \W'} of \m{D} in
  any universe \m{\W'}.
\end{lemma}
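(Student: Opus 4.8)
The plan is to transport the extension operator of $D$ to $D'$ along the retraction, exactly as one does for ordinary injectives in $1$-categorical algebra. Write the retraction data as $s : D' \to D$, $r : D \to D'$ and $\eta : r \comp s \sim \id$. Fix $X : \U$, $Y : \V$, an embedding $j : X \emb Y$ and a map $f : X \to D'$; the goal is to construct a designated extension $f \mid j : Y \to D'$ together with a witness $(f \mid j) \comp j \sim f$.

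First I would push $f$ into $D$ by forming $s \comp f : X \to D$ and apply the assumed algebraic $\U,\V$-injectivity of $D$ to the very same $X$, $Y$ and $j$, obtaining a designated map $(s \comp f) \mid j : Y \to D$ together with a pointwise identification $p : ((s \comp f) \mid j) \comp j \sim s \comp f$. Then I would simply set
\[ f \mid j \eqdef r \comp \bigl( (s \comp f) \mid j \bigr) : Y \to D'. \]
To produce the witnessing homotopy, fix $x : X$: then $((f \mid j) \comp j)\, x$ is by construction $r\bigl( ((s \comp f) \mid j)(j\, x) \bigr)$; applying $\ap\, r$ to $p\, x$ identifies this with $r(s(f\, x))$, and $\eta\,(f\, x)$ identifies that with $f\, x$, so concatenation gives the required element of $((f \mid j) \comp j)\, x = f\, x$.

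There is no genuine obstacle here: the construction is entirely canonical, and the only thing to watch is the routine bookkeeping with $\ap\, r$ and the direction of the homotopies. Note in particular that everything takes place inside $\W'$, so neither univalence nor resizing is invoked; the argument is purely the observation that pointwise equality is preserved by pre- and post-composition, so that $(f \mid j) \comp j = r \comp \bigl( (s \comp f) \mid j \bigr) \comp j \sim r \comp s \comp f \sim f$. (The same recipe clearly also establishes closure of the plain injectives under retracts, by the same computation wrapped inside the propositional truncation, though that is not what is claimed here.)
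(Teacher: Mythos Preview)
Your proof is correct and follows exactly the same approach as the paper: define $f \mid j \eqdef r \comp ((s \comp f) \mid j)$ using the section-retraction pair, which is precisely the construction the paper gives (accompanied only by a diagram). You supply more detail than the paper does, spelling out the homotopy witness via $\ap\, r$ and $\eta$, but the argument is identical.
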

\noindent In particular, any type equivalent to an algebraically injective type
is itself algebraically injective, without the need to invoke univalence.
\begin{proof}
\M{\begin{diagram}[p=0.4em]
X &	 	& \rTo^j  & 		   & Y 	\\
  & \rdTo^f\rdTo(2,4)_{s \comp f} 	&   	       & \ldEto^{f \mid j}\ldTo(2,4)_{(s \comp f) \mid j}       & 	\\
  & 		& D'	       & 		   &    \\
  & 		& \dTo^s \uTo_r & 	   &    \\
  &		& D.
\end{diagram}}

\noindent For a given section-retraction pair \m{(s,r)}, the construction of the
extension operator for \m{D'} from that of \m{D} is given by \m{f \mid
  j \eqdef r \comp ((s \comp f) \mid j)}.
\end{proof}

\begin{lemma}
  The product of any family \m{D_a} of algebraically
  \m{\U,\V}-injective types in a universe \m{\W}, with indices \m{a} in a type
  \m{A} of any universe \m{\T}, is itself algebraically \m{\U,\V}-injective.
\end{lemma}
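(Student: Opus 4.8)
The plan is to extend pointwise into each factor and assemble the results using the $\Sigma$-$\Pi$ form of algebraic injectivity. Given an embedding $j : X \emb Y$ with $X:\U$ and $Y:\V$, and a map $f : X \to \Pi(a:A), D_a$, I first curry/swap: for each $a:A$ I obtain a map $f_a : X \to D_a$ by $f_a \, x \eqdef f \, x \, a$. Since each $D_a$ is algebraically $\U,\V$-injective, I get for each $a$ a designated extension $f_a \mid j : Y \to D_a$ together with a pointwise identification $(f_a \mid j) \comp j \sim f_a$.

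Next I define the extension operator on the product by reassembling: $(f \mid j) \, y \, a \eqdef (f_a \mid j) \, y$, so that $f \mid j : Y \to \Pi(a:A), D_a$. It remains to verify the equation $(f \mid j) \comp j \sim f$. Fix $x : X$; I must show $(f \mid j)(j \, x) = f \, x$ in the type $\Pi(a:A), D_a$. By function extensionality it suffices to show, for each $a:A$, that $(f \mid j)(j\, x)\, a = f\, x\, a$, i.e.\ that $(f_a \mid j)(j\, x) = f_a \, x$, which is exactly the component equation supplied by the algebraic $\U,\V$-injectivity of $D_a$ applied at the point $x$. Note that the whole type $\Pi(a:A), D_a$ lives in the universe $\W \sqcup \T$, but the definition of algebraic $\U,\V$-injectivity places no constraint on the universe of the injective type, so this causes no difficulty.

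This argument is entirely routine; there is no real obstacle. The only point requiring a modicum of care is the use of function extensionality to descend the required identification in $\Pi(a:A), D_a$ to its fiberwise components — but since we assume univalence throughout, function extensionality is available, and indeed the definitions in Section~4 are deliberately phrased with pointwise equality $\sim$ precisely to make bookkeeping of this kind lightweight. One could alternatively observe that this lemma is the special case of the previous lemma (closure under retracts) combined with the fact that $\Pi(a:A), D_a$ is a retract of nothing simpler — so it is genuinely a fresh construction — but the direct argument above is the cleanest.
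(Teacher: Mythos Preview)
Your proof is correct and follows essentially the same approach as the paper: both construct the extension operator for \m{\Pi D} pointwise from the extension operators of the factors, with the paper writing \m{(f \mid j)\,y \eqdef a \mapsto ((x \mapsto f\,x\,a) \mid_a j)\,y}, which is exactly your \m{(f \mid j)\,y\,a \eqdef (f_a \mid j)\,y}. You are simply more explicit about invoking function extensionality in the verification step, which the paper leaves implicit.
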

\noindent In particular, if a type \m{D} in a universe \m{\W} is algebraically
\m{\U,\V}-injective, then so is any exponential power \m{A \to D : \T \sqcup \W} for
any type \m{A} in any universe \m{\T}.
\begin{proof}
We construct the extension operator \m{(-)\mid(-)} of the product
\m{\Pi D : \T \sqcup \W} in a pointwise fashion from the extension
operators \m{(-)\mid_a(-)} of the algebraically injective types
\m{D_a}: For \m{f : X \to \Pi D}, we let \m{f \mid
  j : Y \to \Pi D} be
\M{
  (f \mid j) \, y \eqdef a \mapsto ((x \mapsto f \, x \, a) \mid_a j) \, y.
}
\end{proof}

\begin{lemma}
  Every algebraically \m{\U,\V}-injective type \m{D:\W} is a retract
  of any type \m{Y:\V} in which it is embedded into.
\end{lemma}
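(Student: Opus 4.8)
The plan is to read the retraction straight off the extension operator that algebraic injectivity supplies. Suppose we are given an embedding $j : D \emb Y$ with $Y : \V$. We apply algebraic injectivity with the source type taken to be $D$ itself and the embedding taken to be $j$, so that the source universe in the definition is the universe $\W$ of $D$; thus the instance of algebraic injectivity actually used is the one whose source type ranges over $\W$, and apart from this bit of universe bookkeeping nothing is delicate. Feeding the extension operator the function $f \eqdef \id_D : D \to D$, we obtain a designated map $r \eqdef \id_D \mid j : Y \to D$ together with the pointwise identification $r \comp j \sim \id_D$, which is just the defining property $f \mid j \comp j \sim f$ of the operator read with $f := \id_D$.

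Now, by the conventions of Section~\ref{existence:terminology}, exhibiting $D$ as a retract of $Y$ means producing a map $s : D \to Y$, a map $r : Y \to D$, and a pointwise identification $r \comp s \sim \id$. Taking $s \eqdef j$ and $r$ as just constructed, this is exactly the data we have. Hence $D$ is a retract of $Y$; in fact the section can be taken to be the given embedding $j$, which is the natural strengthening of the statement.

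There is essentially no obstacle to surmount: the entire content of the lemma is that algebraic injectivity is \emph{structure} rather than a mere property, so that it hands us an extension of $\id_D$ along $j$ outright, and ``an extension of the identity of $D$ along $j$'' is by definition ``a retraction of $j$''. The only point worth stating explicitly is that the triangle commutes up to the pointwise equality $\sim$ rather than up to definitional equality; since we assume function extensionality this is immaterial, and in any case the notion of retract in this paper is phrased with $\sim$ from the outset.
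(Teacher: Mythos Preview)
Your argument is exactly the paper's: extend \m{\id_D} along the given embedding \m{j} using the extension operator supplied by algebraic injectivity, and read off the retraction \m{r \eqdef \id_D \mid j} with \m{r \comp j \sim \id_D}. The paper records this with a single commuting triangle and one sentence; you have simply spelled out the same construction in words.

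Your parenthetical about the universe bookkeeping is apt and worth keeping: to apply the extension operator to \m{j : D \emb Y} one needs the source \m{D} to inhabit the first universe parameter of the injectivity hypothesis, so in effect \m{\W} must be \m{\U}. The paper's informal proof leaves this silent, but the analogous later lemma (Lemma~\ref{embedding-||retract||}) is stated with matching universes \m{\W,\V}, which confirms your reading.
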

\begin{proof}
\M{\begin{diagram}
  D & & \rInto^j & & Y  \\
  & \rdTo_\id &  & \ldEto_{r \eqdef \id \mid j} & \\
  & & D. & &
\end{diagram}}

\noindent
We just extend the identity function along the embedding to get the desired retraction~\m{r}.
\end{proof}

The following is a sort of \m{\infty}-Yoneda embedding:
\begin{lemma}
The identity type former \m{\Id_X} of any type \m{X:\U} is an embedding of the type~\m{X} into the type~\m{X \to \U}.
\end{lemma}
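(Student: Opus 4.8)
The plan is to show that every fiber of \m{\Id_X : X \to (X \to \U)} is a subsingleton; since \m{\Id_X} sends \m{x} to \m{\Id x}, the fiber over \m{g : X \to \U} is the type \m{\Sigma(x':X), \Id x' = g}. I would invoke the standard fact that a type is a subsingleton as soon as it is contractible whenever it is inhabited, so it suffices to handle an inhabited fiber: given \m{(x,e)} with \m{e : \Id x = g}, concatenating with \m{e^{-1}} in the second component gives a (fiberwise, hence total) equivalence from the fiber to \m{F_x \eqdef \Sigma(x':X), \Id x' = \Id x}. Thus the whole statement reduces to the single claim that \m{F_x} is contractible for every \m{x : X}. (One could instead phrase this reduction through the characterization of embeddings by \m{\ap_{\Id_X}} being an equivalence at every pair of points, but the fiber formulation is the more direct route here.)

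It then remains to prove that \m{F_x} is contractible, and this is where the Yoneda idea enters. I would first use function extensionality to rewrite \m{\Id x' = \Id x} as \m{\Pi(y:X), (\Id x'\, y = \Id x\, y)}, i.e.\ \m{\Pi(y:X), ((x' = y) = (x = y))}, and then univalence to rewrite this as \m{\Pi(y:X), ((x' = y) \simeq (x = y))}, all naturally in \m{x'}, so that \m{F_x \simeq \Sigma(x':X)\,\Pi(y:X), ((x'=y) \simeq (x=y))}. The key move is to discard the ``is an equivalence'' component: by the universal property of the identity type (dependent Yoneda, i.e.\ based path induction), the type \m{\Pi(y:X), ((x'=y) \to (x=y))} is equivalent to \m{(x = x')}, a family \m{\tau} of maps corresponding to \m{\tau_{x'}(\refl_{x'}) : x = x'} with \m{\tau_y} recovered as \m{p \mapsto \tau_{x'}(\refl_{x'}) \pcomp p}; in particular each \m{\tau_y} is automatically an equivalence (concatenation with the fixed path \m{\tau_{x'}(\refl_{x'})}), so the proposition asserting that all the \m{\tau_y} are equivalences is inhabited, hence contractible, hence can be dropped. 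This yields \m{F_x \simeq \Sigma(x':X)\,\Pi(y:X), ((x'=y)\to(x=y)) \simeq \Sigma(x':X), (x=x')}, which is a singleton, so \m{F_x} is contractible as required.

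The main obstacle is bookkeeping rather than anything conceptual: one must check that the chain of equivalences for \m{\Id x' = \Id x} is genuinely a family over the running variable \m{x'} (so that the outer \m{\Sigma} transports along it correctly), and one must carry out the path induction establishing \m{\tau_y\, p = \tau_{x'}(\refl_{x'}) \pcomp p}, which is exactly what makes the equivalence-data component redundant. Both are short inductions and are the only places where real work happens; everything else is the generic pattern of a \m{\Sigma} fibered over a singleton with contractible fibers.
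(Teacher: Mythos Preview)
Your argument is correct and uses the same core ingredients as the paper: the reduction ``pointed fiber \m{\Rightarrow} contractible fiber'', function extensionality and univalence to pass from \m{\Id x' = \Id x} to families of equivalences, and the Yoneda observation that any transformation out of a representable is automatically a fiberwise equivalence when the target has contractible total space.

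The organization differs in one respect worth noting. You first transport along \m{e^{-1}} to reduce the fiber over an arbitrary \m{g} to the fiber \m{F_x} over \m{\Id x}, and then show \m{F_x \simeq \Sigma(x':X),\, x=x'}. The paper instead keeps the general \m{A} in play: from a point \m{(x,p)} of the fiber it first deduces that \m{\Sigma A} is a singleton (by applying \m{\Sigma} to \m{p}), then runs the Yoneda chain \m{A\,x \simeq (\Id x \wps A) \simeq \Pi_y(\Id x\,y \simeq A\,y) \simeq (\Id x = A)} to conclude that the fiber \m{\Sigma(x:X),\,\Id x = A} is equivalent to \m{\Sigma A}, hence a singleton. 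Your reduction step buys a slightly more concrete endgame (everything is identity types); the paper's version buys a cleaner statement of what the fiber actually is (namely \m{\Sigma A}) without the preliminary transport. The Yoneda step you spell out by the path-induction computation \m{\tau_y\,p = \tau_{x'}(\refl_{x'}) \pcomp p} is exactly what the paper cites as the ``Yoneda corollary'' that every transformation into a family with contractible total space is a fiberwise equivalence, so the substantive content is the same.
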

\begin{proof}
  To show that the \m{\Id}-fiber of a given \m{A : X \to \U} is a
  subsingleton, it suffices to show that if is pointed then it is
  a singleton.  So let \m{(x,p):\Sigma (x : X), \Id x = A} be a point
  of the fiber. Applying \m{\Sigma}, seen as a map of type \m{(X \to
    \U) \to \U}, to the identification~\m{p : \Id \, x = A}, we get an
  identification
  \M {
  \ap \, \Sigma \, p : \Sigma (\Id x) = \Sigma A,
}
and hence, being equal to the singleton type \m{\Sigma (\Id x)},
the type \m{\Sigma A} is itself a singleton.  Hence we have
  \M{\begin{array}{llll}
       A \, x & \simeq & \Id x \wps A & \text{By the Yoneda Lemma~\cite{rijke:msc},} \\
           & = & \Pi (y : X), \Id \, x \, y \to A \, y & \text{by definition of \m{\wps},} \\
           & \simeq & \Pi (y : X), \Id \, x \, y \simeq A \, y & \text{because \m{\Sigma A} is a singleton (Yoneda corollary),} \\
           & \simeq & \Pi (y : X), \Id \, x \, y = A \, y & \text{by univalence,} \\
           & \simeq & \Id \, x = A & \text{by function extensionality.}
     \end{array}
  }
  So by a second application of univalence we get \m{A \, x = (\Id \, x
    = A)}. Hence, applying \m{\Sigma} on both sides, we get
  \m{\Sigma A = (\Sigma (x : X), \Id \, x = A)}.  Therefore, because
  the type \m{\Sigma A} is a singleton, so is the fiber \m{\Sigma (x
    : X), \Id \, x = A} of~\m{A}.
\end{proof}
\begin{lemma} \label{ref:16:3}
  If a type \m{D} in a universe \m{\U} is algebraically \m{\U,\U^+}-injective, then \m{D} is a retract of the exponential power \m{D \to \U} of \m{\U}.
\end{lemma}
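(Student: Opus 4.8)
The plan is to combine the \m{\infty}-Yoneda embedding lemma with the lemma that an algebraically injective type is a retract of any type into which it embeds. First I would note that the identity type former \m{\Id_D : D \to (D \to \U)} is an embedding, by the \m{\infty}-Yoneda lemma proved above, applied with the type \m{D} playing the role of \m{X}. The point to watch is the universe level of the codomain: since \m{D : \U} and \m{\U : \U^+}, the function type \m{D \to \U} lives in \m{\U \sqcup \U^+ = \U^+}. This is precisely why the relevant hypothesis is algebraic \m{\U,\U^+}-injectivity rather than algebraic \m{\U,\U}-injectivity.

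Then I would invoke the lemma stating that every algebraically \m{\U,\V}-injective type \m{D : \W} is a retract of any type \m{Y : \V} into which it is embedded, instantiated with \m{\W \eqdef \U}, \m{\V \eqdef \U^+}, \m{Y \eqdef (D \to \U)}, and the embedding \m{j \eqdef \Id_D}. Concretely, the section is \m{\Id_D} and the retraction is \m{r \eqdef \id_D \mid j : (D \to \U) \to D}, the extension of the identity function \m{\id_D : D \to D} along \m{\Id_D}; by the extension property it satisfies \m{r \comp \Id_D \sim \id_D}, which exhibits \m{D} as a retract of \m{D \to \U}, as required.

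There is essentially no obstacle beyond bookkeeping: once the \m{\infty}-Yoneda embedding is in place, the result is an immediate instance of the two preceding lemmas, and the only subtlety is keeping the universe levels straight — in particular, noticing that \m{D \to \U} is a \m{\U^+}-type, which is what dictates the form of the hypothesis. No propositional resizing is used; this lemma is a building block for the resizing-dependent results later.
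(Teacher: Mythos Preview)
Your proposal is correct and follows essentially the same route as the paper: combine the \m{\infty}-Yoneda embedding \m{\Id_D : D \to (D \to \U)} with the lemma that an algebraically injective type is a retract of anything it embeds into, noting that \m{D \to \U : \U^+} to justify the universe parameters. The paper's proof is exactly this, summarized by the diagram with \m{r \eqdef \id \mid \Id}.
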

\begin{proof}
\M{\begin{diagram}
  D & & \rInto^\Id & & (D \to \U)  \\
  & \rdTo_\id &  & \ldEto_{r \eqdef \id \mid \Id} & \\
  & & D. & &
\end{diagram}}

\noindent This is obtained by combining the previous two
constructions, using the fact that \m{D \to \U} lives in the successor
universe \m{\U^+}.
\end{proof}

\section{Algebraic flabbiness and resizing constructions}

We now discuss resizing constructions that don't assume resizing
axioms.  The above results, when combined together in the obvious way,
almost give directly that the algebraically injective types are
precisely the retracts of exponential powers of universes, but there
is a universe mismatch.  Keeping track of the universes to avoid the
mismatch, what we get instead is a resizing construction without the
need for resizing axioms:
\begin{lemma}
  Algebraically \m{\U,\U^+}-injective types \m{D:\U} are
  algebraically \m{\U,\U}-injective too.
\end{lemma}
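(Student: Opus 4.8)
The plan is to chain together three results already established earlier in the excerpt: Lemma~\ref{ref:16:3}, which exhibits an algebraically \m{\U,\U^+}-injective type \m{D:\U} as a retract of the exponential power \m{D \to \U}; the closure of algebraic injectivity under exponential powers; and its closure under retracts. The one point that has to be noticed to make this work despite the apparent universe mismatch — \m{D \to \U} lives in \m{\U^+}, not in \m{\U} — is that the property of being \emph{algebraically \m{\U,\U}-injective} places no constraint whatsoever on the universe in which the injective type itself sits; it constrains only the universes of the domain and codomain of the embeddings along which we extend.

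First I would apply Lemma~\ref{ref:16:3} to the hypothesis, obtaining a section--retraction pair that exhibits \m{D} as a retract of \m{D \to \U}. Next I would record that the universe \m{\U} is algebraically \m{\U,\U}-injective: this is the instance \m{\V \eqdef \U} of Lemma~\ref{ref:16:1}, using that \m{\U \sqcup \U = \U} definitionally. Since exponential powers of algebraically \m{\U,\V}-injective types are again algebraically \m{\U,\V}-injective (the lemma on products and exponential powers, applied with index type \m{D:\U} and base \m{\U}), it follows that \m{D \to \U} is algebraically \m{\U,\U}-injective, even though it lives in \m{\U^+}.

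Finally, since algebraic \m{\U,\U}-injectivity is inherited by retracts in any universe (the closure lemma for retracts, which likewise imposes no condition on the universe \m{\W'} of the retract) and \m{D} is a retract of the algebraically \m{\U,\U}-injective type \m{D \to \U}, we conclude that \m{D} is itself algebraically \m{\U,\U}-injective. I do not expect a genuine obstacle here beyond the universe bookkeeping: the whole content of the argument is the observation that \m{\U} being algebraically \m{\U,\U}-injective propagates through the exponential power up into \m{\U^+} and then back down to \m{D} along the retraction of Lemma~\ref{ref:16:3}, so that the desired ``resizing'' of the codomain universe from \m{\U^+} to \m{\U} comes for free.
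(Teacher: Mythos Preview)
Your proposal is correct and follows exactly the same route as the paper's own proof: retract \m{D} off \m{D \to \U} via Lemma~\ref{ref:16:3}, observe that \m{\U} and hence the power \m{D \to \U} is algebraically \m{\U,\U}-injective, and then descend along the retraction. Your explicit attention to the universe bookkeeping (that the retract and power lemmas impose no constraint on the universe of the injective type itself) is precisely what makes the argument go through, and the paper leaves this implicit.
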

\begin{proof}
By the above constructions, we
first get that \m{D}, being algebraically \m{\U,\U^+}-injective, is a
retract of \m{D \to \U}. But then \m{\U} is algebraically
\m{\U,\U}-injective, and, being a power of \m{\U}, so is \m{D \to \U}.
Finally, being a retract of \m{D \to \U}, we have that \m{D} is
algebraically \m{\U,\U}-injective.
\end{proof}
This is resizing down and so is not surprising.
Of course, such a construction can be performed directly by considering
an embedding \m{\U \to \U^+}, but the idea is to generalize it to
obtain further resizing-for-free constructions, and, later,
resizing-for-a-price constructions.  We achieve this by considering a
notion of flabbiness as data, rather than as property as in the
1-topos literature (see e.g.\
Blechschmidt~\cite{Blechschmidt:2018}). The notion of flabbiness
considered in topos theory is defined with truncated \m{\Sigma}, that
is, the existential quantifier \m{\exists} with values in the
subobject classifier \m{\Omega}. We refer to the notion defined with
untruncated \m{\Sigma} as algebraic flabbiness.

\begin{definition}
  We say that a type \m{D : \W} is \df{algebraically \m{\U}-flabby} if
\M{
  \Pi (P : \U), \text{if \m{P} is a subsingleton then \m{\Pi(f : P \to D)\, \Sigma (d : D)\, \Pi(p : P), d = f \, p}.}
}
\end{definition}
\noindent This terminology is more than a mere analogy with
algebraic injectivity: notice that flabbiness and algebraic flabbiness
amount to simply injectivity and algebraic injectivity with respect to
the class of embeddings \m{P \to \One} with \m{P} ranging over
subsingletons:
\begin{diagram}
  P & & \rInto & & \One  \\
  & \rdTo_f &  & \ldEto & \\
  & & D. & &
\end{diagram}
Notice also that an algebraically flabby type \m{D} is pointed, by
considering the case when \m{f} is the unique map \m{\Zero \to D}.

\begin{lemma} \label{for27:1}
  If a type \m{D} in the universe \m{\W} is algebraically
  \m{\U,\V}-injective, then it is algebraically \m{\U}-flabby.
\end{lemma}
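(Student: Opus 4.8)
The plan is to instantiate the hypothesis directly at the family of embeddings $P \emb \One$ singled out in the discussion just before the statement. Suppose $D : \W$ is algebraically \m{\U,\V}-injective, let $P : \U$ be a subsingleton, and let $f : P \to D$ be given; we must produce $d : D$ with $d = f\,p$ for all $p : P$. Take $X \eqdef P$, take $Y \eqdef \One_\V$ (the copy of the singleton in $\V$, which is available by our stipulations), and take $j : P \to \One_\V$ to be the unique map $j \eqdef \lambda p, \one$.

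First I would check that $j$ is an embedding. For $y : \One_\V$ the fibre $j^{-1}(y) = \Sigma(p : P), j\,p = y$; since $\One_\V$ is a proposition, each identity type $j\,p = y$ is contractible, so $j^{-1}(y) \simeq P$, which is a subsingleton by assumption. Hence all fibres of $j$ are subsingletons, i.e.\ $j$ is an embedding. (This is the one place where both hypotheses get used: that $P$ is a proposition and that $\One$ is a proposition.) Now apply algebraic \m{\U,\V}-injectivity of $D$ to $X$, $Y$, this $j$, and $f$, obtaining an extension $f \mid j : \One_\V \to D$ together with a pointwise identification $f \mid j \comp j \sim f$.

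Finally, define $d \eqdef (f \mid j)\,\one$. For any $p : P$ we have $j\,p = \one$ (definitionally, by $\beta$, or at worst because $\One$ is a proposition), so $(f \mid j)(j\,p) = (f \mid j)\,\one = d$, while the extension property gives $(f \mid j)(j\,p) = f\,p$; concatenating these identifications and symmetrizing yields $d = f\,p$. Thus $(d, \lambda p,\ \dots) : \Sigma(d : D)\,\Pi(p : P), d = f\,p$, which is exactly the data witnessing algebraic \m{\U}-flabbiness of $D$.

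I do not expect any real obstacle here: the argument is a straightforward unwinding of the two definitions through the chosen embedding, and it uses neither univalence nor propositional truncation — consistent with the "algebraic", data-level character of both notions. The only step requiring a moment's care is the verification that $j : P \to \One_\V$ is an embedding, and even that is immediate once one recalls that a map into a proposition is an embedding precisely when its domain is a proposition.
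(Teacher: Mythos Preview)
Your proof is correct and follows essentially the same approach as the paper: instantiate algebraic \m{\U,\V}-injectivity at the embedding \m{P \to \One_\V}, take \m{d \eqdef (f \mid j)\,\one}, and read off the required identifications from the extension property. Your version is simply more explicit about the universe placement of \m{\One} and the verification that \m{j} is an embedding, both of which the paper leaves implicit.
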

\begin{proof}
  Given a subsingleton \m{P:\U} and a map \m{f : P \to D}, we can take its extension \m{f \mid \operatorname{!}: \One \to D} along the unique map \m{!:P \to \One}, because it is an embedding, and then we let \m{d \eqdef (f \mid \operatorname{!})\, \one}, and the extension property gives \m{d = f \, p} for any \m{p:P}.
\end{proof}
The interesting thing about this is that the
universe~\m{\V} is forgotten, and then we can put any other universe
below \m{\U} back, as follows.

\begin{lemma} \label{for27:2}
  If a type \m{D} in the universe \m{\W} is algebraically \m{\U \sqcup
    \V}-flabby, then it is also algebraically \m{\U,\V}-injective.
\end{lemma}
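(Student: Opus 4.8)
The goal is to show: if $D : \W$ is algebraically $\U \sqcup \V$-flabby, then $D$ is algebraically $\U,\V$-injective. So I am given types $X : \U$, $Y : \V$, an embedding $j : X \emb Y$, and a function $f : X \to D$, and I must construct an extension $f \mid j : Y \to D$ together with a pointwise identification $(f \mid j) \comp j \sim f$.

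The key idea is that for each $y : Y$ the fiber $j^{-1}(y)$ is a subsingleton (since $j$ is an embedding), and it lives in the universe $\U \sqcup \V$ (because $X : \U$, $Y : \V$, and the identity types of $Y$ land in $\V$). So I would apply the algebraic $\U \sqcup \V$-flabbiness of $D$ to the subsingleton $P \eqdef j^{-1}(y)$ and to the map $g_y : j^{-1}(y) \to D$ defined by $g_y(x, p) \eqdef f\,x$ (that is, $g_y \eqdef f \comp \fst$). Flabbiness then hands me a point $d_y : D$ together with a proof that $d_y = g_y(w)$ for every $w : j^{-1}(y)$. I define $(f \mid j)\,y \eqdef d_y$. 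Since flabbiness is stated with untruncated $\Sigma$, this assignment $y \mapsto d_y$ is a genuine function $Y \to D$ — no choice is needed — though to be careful I would phrase it as first obtaining a function $\Phi : \Pi(y : Y), \Sigma(d : D), \Pi(w : j^{-1}(y)), d = g_y\,w$ by $\Sigma$-$\Pi$ distributivity applied to the flabbiness data, and then setting $f \mid j \eqdef \fst \comp \Phi$.

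It remains to verify the extension property $(f \mid j)(j\,x) = f\,x$ for each $x : X$. For a fixed $x : X$, the point $(x, \refl_{j\,x})$ inhabits the fiber $j^{-1}(j\,x)$, so the second component of $\Phi(j\,x)$, instantiated at this point, gives $d_{j\,x} = g_{j\,x}(x, \refl) = f\,x$, which is exactly what we want. So the pointwise identification is $x \mapsto \snd(\Phi(j\,x))\,(x,\refl_{j\,x})$.

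**Main obstacle.** There is no deep obstacle here; the only thing to be mildly careful about is the universe bookkeeping — checking that $j^{-1}(y)$ really does live in $\U \sqcup \V$ so that $\U \sqcup \V$-flabbiness applies, and that it really is a subsingleton, which is precisely the hypothesis that $j$ is an embedding. The rest is routine manipulation of $\Sigma$ and $\Pi$, transporting the flabbiness data through $\Sigma$-$\Pi$ distributivity. This lemma is, in effect, the converse direction making algebraic flabbiness and algebraic injectivity interchangeable (modulo the universe shuffle), so it is the easy half of that equivalence.
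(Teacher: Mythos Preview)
Your proposal is correct and follows essentially the same route as the paper: apply algebraic \m{\U \sqcup \V}-flabbiness to each fiber \m{j^{-1}(y)} (which is a subsingleton in \m{\U \sqcup \V}) together with the map \m{(x,p) \mapsto f\,x}, set \m{(f \mid j)\,y} to be the resulting point, and verify the extension property by instantiating at \m{(x,\refl_{j\,x})}. Your remark about \m{\Sigma{-}\Pi} distributivity making the assignment a genuine function is a helpful clarification that the paper leaves implicit.
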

\begin{proof}
  Given an embedding \m{j : X \to Y} of types \m{X:\U} and \m{\V}, a map \m{f : X \to D} and a
  point \m{y:Y}, in order to construct \m{(f \mid j) \, y} we consider
  the map \m{f_y : j^{-1}(y) \to D} defined by \m{(x,p) \mapsto
    f\,x}. Because the fiber \m{j^{-1}(y) : \U \sqcup \V} is a subsingleton as \m{j} is an
  embedding, we can apply algebraic flabbiness to get \m{d_y : D} with
  \m{d_y = f_y (x,p)} for all \m{(x,p):j^{-1}(y)}. By the construction of
  \m{f_y} and the definition of fiber, this amounts to saying that for
  any \m{x : X} and \m{p : j \, x = y}, we have \m{d_y = f \,
    x}. Therefore we can take
\M{(f \mid j) \, y \eqdef d_y,}
because we then have
\M{(f \mid j) (j \, x) = d_{j \, x} = f_{j \, x} (x , \refl_{j \, x}) = f \, x}
for any \m{x:X}, as required.
\end{proof}
\noindent We then get the following resizing construction by composing the above
two conversions between algebraic flabbiness and injectivity:
\begin{lemma}
  If a type \m{D} in the universe \m{\W} is algebraically \m{(\U \sqcup
    \T),\V}-injective, then it is also algebraically \m{\U,\T}-injective.
\end{lemma}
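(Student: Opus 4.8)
The plan is to obtain this immediately by composing Lemmas~\ref{for27:1} and~\ref{for27:2}, exactly the two conversions between algebraic injectivity and algebraic flabbiness established above, arranging the universe parameters so that the codomain universe is first forgotten and then replaced.

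First I would apply Lemma~\ref{for27:1} with its first universe parameter instantiated to \m{\U \sqcup \T} and its second to \m{\V}: since \m{D : \W} is by hypothesis algebraically \m{(\U \sqcup \T),\V}-injective, this yields that \m{D} is algebraically \m{(\U \sqcup \T)}-flabby. At this step the universe \m{\V} simply disappears from the data, since algebraic flabbiness records only a single universe parameter. Then I would apply Lemma~\ref{for27:2} with its first universe parameter instantiated to \m{\U} and its second to \m{\T}: its hypothesis asks for \m{D} to be algebraically \m{\U \sqcup \T}-flabby, which is precisely what the previous step provided, and its conclusion is that \m{D} is algebraically \m{\U,\T}-injective, as desired.

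There is no real obstacle here; the only point requiring care is the bookkeeping of universe levels, namely the observation that passing through algebraic flabbiness erases the codomain universe \m{\V} of the original injectivity, so that any universe with the same join against \m{\U} — in particular \m{\T} itself — can be reinstated as the new codomain universe. When \m{\T} sits below \m{\U}, so that \m{\U \sqcup \T = \U}, this is a genuine resizing-down, generalizing the earlier passage from algebraic \m{\U,\U^+}-injectivity to algebraic \m{\U,\U}-injectivity.
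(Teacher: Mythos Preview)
Your proposal is correct and matches the paper's own proof exactly: the paper also obtains this lemma simply by composing Lemmas~\ref{for27:1} and~\ref{for27:2}, with precisely the universe instantiations you describe. Your closing commentary on resizing-down is fine but slightly undersells the lemma; the paper stresses that by taking \m{\V = \U_0} one can also go \emph{up} (e.g.\ from \m{\U,\U_0}-injectivity to \m{\U_0,\U}-injectivity), so this is not merely a resizing-down construction.
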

\noindent In particular, algebraic \m{\U,\V}-injectivity gives
algebraic \m{\U,\U}- and \m{\U_0,\U}-injectivity.  So this is no
longer necessarily resizing down, by taking \m{\V} to be
e.g.\ the first universe~\m{\U_0}.

\section{Injectivity of subuniverses}

We now apply algebraic flabbiness to show that any subuniverse closed
under subsingletons and under sums, or alternatively under products,
is also algebraically injective.
\begin{definition}
  By a \df{subuniverse} of \m{\U} we mean a projection \m{\Sigma \, A
    \to \U} with \m{A : \U \to \T} subsingleton-valued and the
  universe \m{\T} arbitrary. By a customary abuse of language, we also
  sometimes refer to the domain of the projection as the
  subuniverse. Closure under subsingletons means that \m{A\,P} holds
  for any subsingleton \m{P:\U}. Closure under sums amounts to
  saying that if \m{X:\U} satisfies \m{A} and every \m{Y \, x}
  satisfies \m{A} for a family \m{Y : X \to \U}, then so does
  \m{\Sigma \, Y}. Closure under products is defined in the same way
  with \m{\Pi} in place of \m{\Sigma}.
\end{definition}
\noindent Notice that \m{A} being subsingleton-valued is
precisely what is needed for the projection to be an embedding, and
that all embeddings are of this form up to equivalence (more
precisely, every embedding of any two types is the composition of an
equivalence into a sum type followed by the first projection).

\begin{lemma}
  Any subuniverse of \m{\U} which is closed
  under subsingletons and sums, or alternatively under subsingletons and
  products, is algebraically \m{\U}-flabby and hence
  algebraically \m{\U,\U}-injective.
\end{lemma}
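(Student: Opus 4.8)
The plan is to construct directly a witness of algebraic \m{\U}-flabbiness for the subuniverse \m{D \eqdef \Sigma \, A}, and then obtain algebraic \m{\U,\U}-injectivity from Lemma~\ref{for27:2} with \m{\V \eqdef \U}, using \m{\U \sqcup \U = \U}. Accordingly, first I would fix a subsingleton \m{P : \U} and a map \m{f : P \to \Sigma \, A}, and abbreviate \m{X_p \eqdef \fst (f \, p) : \U} and \m{a_p \eqdef \snd (f \, p) : A \, X_p}.

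In the case of closure under subsingletons and sums, I would take \m{d : \Sigma \, A} to be the pair whose first component is the type \m{\Sigma (p : P), X_p : \U} and whose second component witnesses \m{A(\Sigma (p : P), X_p)}; this witness is available because \m{P} satisfies \m{A} by closure under subsingletons (as \m{P} is a subsingleton), each \m{X_p} satisfies \m{A} via \m{a_p}, and hence \m{\Sigma (p : P), X_p} satisfies \m{A} by closure under sums. To verify \m{d = f \, p} for an arbitrary \m{p : P}, note that \m{f \, p = (X_p , a_p)}, so it suffices to identify the first components: since \m{P} is a subsingleton pointed by \m{p}, a sum indexed by it is equivalent to its summand over \m{p}, giving \m{\Sigma (p' : P), X_{p'} \simeq X_p} and hence \m{\Sigma (p' : P), X_{p'} = X_p} by univalence; the second components then agree automatically after transport, because \m{A} is subsingleton-valued.

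For the alternative hypothesis of closure under subsingletons and products, the argument is dual: take \m{d} to have first component \m{\Pi (p : P), X_p : \U}, with the \m{A}-witness obtained from \m{A\,P} (closure under subsingletons) and the \m{a_p} by closure under products, and use that a product indexed by the subsingleton \m{P} pointed by \m{p} is equivalent to the factor \m{X_p}, so again \m{\Pi (p' : P), X_{p'} = X_p} by univalence, the second components agreeing since \m{A} is subsingleton-valued. Finally, algebraic \m{\U}-flabbiness yields algebraic \m{\U,\U}-injectivity by Lemma~\ref{for27:2}.

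I do not expect a serious obstacle. The one point deserving attention is that the universe \m{\U} in ``algebraically \m{\U}-flabby'' is exactly the universe over which \m{\Sigma \, A} ranges, which is precisely what makes the closure step type-check, since \m{\Sigma (p : P), X_p} (resp.\ \m{\Pi (p : P), X_p}) lands in \m{\U} when \m{P : \U} and each \m{X_p : \U}; and the \m{A}-components of elements of \m{\Sigma \, A} never need to be matched by hand because \m{A} is subsingleton-valued, so the identification of pairs reduces to the identification of their \m{\U}-components.
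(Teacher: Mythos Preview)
Your proposal is correct and follows essentially the same approach as the paper's proof: both take the flabbiness witness to have first component \m{\Sigma(\fst \comp f)} (respectively \m{\Pi(\fst \comp f)}), obtain its \m{A}-witness from closure under subsingletons together with closure under sums (respectively products), and then reduce the required identification \m{d = f\,p} to an equality of first components via univalence and the fact that a sum or product indexed by a subsingleton is equivalent to any of its summands or factors. The passage to algebraic \m{\U,\U}-injectivity via Lemma~\ref{for27:2} is likewise what the paper does.
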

\begin{proof}
  Let \m{\Sigma\,A} be a subuniverse of \m{\U}, let \m{P:\U} be a
  subsingleton and \m{f : P \to \Sigma \, A} be given. Then define
  \begin{quote}
  (1)~\m{ X \eqdef \Sigma (\fst \comp f)} \qquad or \qquad (2)~\m{X \eqdef \Pi (\fst \comp f)}
  \end{quote}
  according to whether we have closure under sums or products. Because
  \m{P}, being a subsingleton satisfies \m{A} and because the values
  of the map \m{\fst \comp f : P \to \U} satisfy \m{A} by definition
  of subuniverse, we have \m{a : A\, X} by the sum or product closure
  property, and \m{d \eqdef (X,a)} has type \m{\Sigma \,A}. To
  conclude the proof, we need to show that \m{d = f\,p} for any
  \m{p:P}. Because the second component \m{a} lives in a subsingleton
  by definition of subuniverse, it suffices to show that the first
  components are equal, that is, that \m{X = \fst (f p)}. But this
  follows by univalence, because a sum indexed by a subsingleton is
  equivalent to any of summands, and a product indexed by a
  subsingleton is equivalent to any of its factors.
\end{proof}

We index \m{n}-types from \m{n=-2} as in the HoTT book, where the
\m{-2}-types are the singletons. We have the following as a corollary.
\begin{theorem}
  The subuniverse of \m{n}-types in a universe \m{\U} is algebraically
  \m{\U}-flabby, in at least two ways, and hence algebraically
  \m{\U,\U}-injective.
\end{theorem}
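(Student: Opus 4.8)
The plan is to deduce this immediately from the previous lemma by verifying that the subuniverse of $n$-types satisfies both closure hypotheses used there. Recall that the subuniverse of $n$-types in $\U$ is the projection $\Sigma\,A \to \U$ with $A\,X \eqdef (\text{$X$ is an $n$-type})$, which is subsingleton-valued since being an $n$-type is a proposition (provable by induction on $n$ using function extensionality). So it is indeed a subuniverse in the sense of the definition, with $\T = \U$.

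First I would check closure under subsingletons: any subsingleton $P:\U$ is a $(-1)$-type, hence an $n$-type for every $n \geq -1$, and also a $(-2)$-type would require contractibility, so for the case $n=-2$ one notes that the singletons are closed under subsingletons only in the sense that... — actually here I must be slightly careful: a subsingleton need not be a singleton, so for $n=-2$ the subuniverse of singletons is \emph{not} closed under subsingletons. However, the statement indexes $n$-types from $n=-2$, and the intended reading is presumably $n \geq -1$, or else one reinterprets: for the theorem to hold we need $n \geq -1$, and I would state it that way, or observe that for $n=-2$ the subuniverse of singletons is a single contractible type and is trivially algebraically flabby by a direct argument. So the main case is $n \geq -1$, where every subsingleton is an $n$-type, giving closure under subsingletons.

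Next I would verify the two closure properties that give the "at least two ways". Closure under sums: if $X$ is an $n$-type and $Y\,x$ is an $n$-type for each $x:X$, then $\Sigma\,Y$ is an $n$-type — this is the standard fact that $n$-types are closed under $\Sigma$, proved by induction on $n$. Closure under products: if every $Y\,x$ is an $n$-type then $\Pi\,Y$ is an $n$-type — this uses function extensionality and is again standard by induction on $n$ (the base case $n=-2$ being that a product of contractible types is contractible). With either of these in hand, the previous lemma applies verbatim and yields that the subuniverse of $n$-types is algebraically $\U$-flabby; since we have both closure properties, we get two distinct flabbiness structures (one via $\Sigma$, one via $\Pi$), hence "in at least two ways". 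The final clause, algebraic $\U,\U$-injectivity, then follows from Lemma~\ref{for27:2} with $\V \eqdef \U$ (since $\U \sqcup \U = \U$), exactly as in the statement of the previous lemma.

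The only real obstacle is the bookkeeping around $n=-2$ noted above, together with citing the closure-of-$n$-types-under-$\Sigma$-and-$\Pi$ facts, which are routine and available in the HoTT book; I expect the author simply restricts attention to $n \geq -1$ or treats $n=-2$ separately, so the proof is essentially a one-line appeal to the preceding lemma after recording that the $n$-types form a subuniverse closed under subsingletons and under both $\Sigma$ and $\Pi$.
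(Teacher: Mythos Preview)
Your proposal is correct and matches the paper's proof essentially line for line: the paper observes that being an \m{n}-type is a proposition (so we have a subuniverse), treats \m{n=-2} separately by noting that the subuniverse of singletons is itself a singleton and hence trivially injective, and for \m{n>-2} invokes the standard closure of \m{n}-types under subsingletons and under both \m{\Sigma} and \m{\Pi} to apply the preceding lemma. You anticipated exactly the special handling of \m{n=-2} that the paper uses.
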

\begin{proof}
  We have a subuniverse because the notion of being an \m{n}-type is a
  proposition. For \m{n=-2}, the subuniverse of singletons is itself a
  singleton, and hence trivially injective.  For \m{n>-2}, the
  \m{n}-types are known to be closed under subsingletons and both sums
  and products.
\end{proof}

\noindent In particular:
\begin{enumerate}
\item The type \m{\Omega_\U} of subsingletons in a universe \m{\U} is
  algebraically \m{\U,\U}-injective.

  (Another way to see that \m{\Omega_\U} is algebraically injective is
  that it is a retract of the universe by propositional
  truncation. The same would be the case for \m{n}-types if we were
  assuming \m{n}-truncations, which we are not.)

\item Powersets, being exponential powers of \m{\Omega_\U}, are
  algebraically \m{\U,\U}-injective.
\end{enumerate}
An anonymous referee suggested the following additional examples: (i)
The subuniverse of subfinite types, i.e., subtypes of types for which
there is an uunspecified equivalence with \m{\operatorname{Fin}(n)}
for some~\m{n}. This subuniverse is closed under both \m{\Pi} and
\m{\Sigma}.  (ii) Reflective subuniverses, as they are closed under
\m{\Pi}. (iii) Any universe \m{\U} seen as a subuniverse of \m{\U
  \sqcup \V}.

\section{Algebraic flabbiness with resizing axioms}

Returning to size issues, we now apply algebraic flabbiness to show
that propositional resizing gives unrestricted algebraic injective
resizing.
\begin{definition} \label{resizing}
The propositional resizing principle, from \m{\U} to \m{\V}, that we
consider here says that every proposition in the universe \m{\U} has
an equivalent copy in the universe~\m{\V}. By propositional resizing without
qualification, we mean propositional resizing between any of the
universes involved in the discussion.
\end{definition}
This is consistent because
it is implied by excluded middle, but, as far as we are aware, there
is no known computational interpretation of this axiom. A model in
which excluded middle fails but propositional resizing holds is given
by Shulman~\cite{MR3340541}.

We begin with the following construction, which says that algebraic
flabbiness is universe independent in the presence of propositional
resizing:

\begin{lemma}
  If propositional resizing holds, then the algebraic \m{\V}-flabbiness
  of a type in any universe gives its algebraic \m{\U}-flabbiness.
\end{lemma}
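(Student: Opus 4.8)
The plan is to transport the flabbiness structure across a resized copy of the proposition. Given a type $D : \W$ that is algebraically $\V$-flabby, to establish its algebraic $\U$-flabbiness we must take an arbitrary subsingleton $P : \U$ together with a map $f : P \to D$ and produce a point $d : D$ with $d = f\,p$ for every $p : P$.

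First I would invoke propositional resizing from $\U$ to $\V$ to obtain a proposition $P' : \V$ together with maps $g : P \to P'$ and $h : P' \to P$; since both $P$ and $P'$ are propositions, a logical equivalence is all that resizing needs to supply, and it is all that we shall use. Next I would apply the assumed algebraic $\V$-flabbiness of $D$ to the subsingleton $P'$ and the composite $f \comp h : P' \to D$; this yields a point $d : D$ together with a family of identifications $d = f(h\,p')$ indexed by $p' : P'$.

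It then remains to check that this same $d$ works for $P$, i.e.\ that $d = f\,p$ for every $p : P$. Given such a $p$, I would instantiate the identification just obtained at the point $g\,p : P'$ to get $d = f(h(g\,p))$, and then use that $h(g\,p) = p$, which holds because $P$ is a proposition, to conclude $d = f\,p$. Bundling $d$ with this family of identifications produces the required element of $\Sigma (d : D)\, \Pi (p : P),\, d = f\,p$, witnessing algebraic $\U$-flabbiness.

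There is no genuine obstacle here beyond keeping the universe levels and the directions of $g$ and $h$ straight. The one point worth flagging is that the round trip $h \comp g$ need not be judgmentally the identity; its agreement with the identity holds only up to a path, but such a path exists precisely because $P$ is a proposition, and this is exactly what lets us rewrite $f(h(g\,p))$ as $f\,p$ in the last step.
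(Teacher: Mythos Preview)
Your proof is correct and follows essentially the same route as the paper's: both obtain a resized copy of the given \m{\U}-proposition in \m{\V}, apply \m{\V}-flabbiness to the composite map, and then transport the resulting identifications back along the round trip. The only cosmetic difference is that the paper takes the equivalence \m{\beta : Q \to P} supplied by resizing and uses its quasi-inverse \m{\alpha} directly, while you take merely a logical equivalence \m{g,h} and recover the round-trip identification from the fact that \m{P} is a proposition; since logical equivalences between propositions are automatically equivalences, this comes to the same thing.
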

\begin{proof}
  Let \m{D:\W} be a type in any universe \m{\W}, let \m{P : \U} be a
  proposition and \m{f : P \to D}. By resizing, we have an equivalence
  \m{\beta : Q \to P} for a suitable proposition \m{Q:\V}.  Then the
  algebraic \m{\V}-flabbiness of \m{D} gives a point \m{d:D} with \m{d
    = (f \comp \beta) \, q} for all \m{q : Q}, and hence with \m{d = f
    \, p} for all \m{p : P}, because we have \m{p=\beta \, q} for \m{q
    = \alpha \, p} where \m{\alpha} is a quasi-inverse of \m{\beta},
  which establishes the algebraic \m{\U}-flabbiness of~\m{D}.
\end{proof}

And from this it follows that algebraic injectivity is also universe
independent in the presence of propositional resizing: we convert
back-and-forth between algebraic injectivity and algebraic flabbiness.

\begin{lemma} \label{universe:independence}
  If propositional resizing holds, then for any type \m{D} in any universe
  \m{\W}, the algebraic \m{\U,\V}-injectivity of \m{D} gives its
  algebraic \m{\U',\V'}-injectivity.
\end{lemma}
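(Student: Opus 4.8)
The plan is to chain the two conversions between algebraic injectivity and algebraic flabbiness, with the resizing-invariance of flabbiness inserted in the middle. Concretely, suppose $D:\W$ is algebraically $\U,\V$-injective. By Lemma~\ref{for27:1}, $D$ is algebraically $\U$-flabby. The previous lemma (resizing-invariance of algebraic flabbiness) then upgrades this to algebraic $\U'\sqcup\V'$-flabbiness of $D$, since that lemma lets us pass from algebraic flabbiness at one universe to algebraic flabbiness at any other universe. Finally, Lemma~\ref{for27:2} converts algebraic $\U'\sqcup\V'$-flabbiness back into algebraic $\U',\V'$-injectivity of $D$, which is the desired conclusion.

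So the whole argument is a three-step composition: injectivity $\Rightarrow$ flabbiness (Lemma~\ref{for27:1}), flabbiness at one level $\Rightarrow$ flabbiness at another (via resizing, the preceding lemma), flabbiness $\Rightarrow$ injectivity (Lemma~\ref{for27:2}). The only mildly delicate bookkeeping is matching the universe parameters: Lemma~\ref{for27:1} produces $\U$-flabbiness (the codomain universe $\V$ is discarded, which is exactly the point), and Lemma~\ref{for27:2} consumes $(\U'\sqcup\V')$-flabbiness to produce $\U',\V'$-injectivity, so one just needs to know that resizing lets the middle step bridge $\U$-flabbiness to $(\U'\sqcup\V')$-flabbiness. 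Since ``propositional resizing without qualification'' is assumed between all the universes in play, this bridging is immediate from the preceding lemma.

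There is essentially no obstacle here: all the real work has already been done in Lemmas~\ref{for27:1} and~\ref{for27:2} and in the resizing-invariance of flabbiness. The proof is a one-line assembly of these three facts, and I would write it as such, perhaps with a sentence noting that the intermediate flabbiness universe is chosen to be $\U'\sqcup\V'$ precisely so that the final application of Lemma~\ref{for27:2} yields the stated $\U',\V'$-injectivity.

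\begin{proof}
  By Lemma~\ref{for27:1}, algebraic \m{\U,\V}-injectivity of \m{D}
  gives algebraic \m{\U}-flabbiness. By the previous lemma, using
  propositional resizing, this gives algebraic \m{(\U' \sqcup
    \V')}-flabbiness. By Lemma~\ref{for27:2}, this in turn gives
  algebraic \m{\U',\V'}-injectivity of \m{D}, as required.
\end{proof}
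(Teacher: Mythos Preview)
Your proof is correct and follows exactly the same three-step composition as the paper: Lemma~\ref{for27:1} to pass from injectivity to \m{\U}-flabbiness, the preceding resizing-invariance lemma to obtain \m{\U'\sqcup\V'}-flabbiness, and Lemma~\ref{for27:2} to recover \m{\U',\V'}-injectivity. (In fact the paper's printed proof contains a self-reference typo where you correctly cite ``the previous lemma''.)
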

\begin{proof}
  We first get the \m{\U}-flabbiness of \m{D} by~\ref{for27:1}, and then its
  \m{\U' \sqcup \V'}-flabbiness by~\ref{universe:independence}, and finally its algebraic
  \m{\U',\V'}-injectivity by~\ref{for27:2}.
\end{proof}

As an application of this and of the algebraic injectivity of
universes, we get that any universe is a retract of any larger
universe.  We remark that for types that are not sets, sections are
not automatically embeddings~\cite{MR3548859}. But we can choose the
retraction so that the section is an embedding in our situation.

\begin{lemma} \label{canonical}
  We have an embedding of any universe \m{\U} into any larger universe \m{\U \sqcup \V}.
\end{lemma}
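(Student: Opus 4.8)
The plan is to exhibit the canonical embedding directly, giving substance to the assertion already made in Section~\ref{foundations}. I would take the embedding to be the lift \m{\lift_{\U,\V} : \U \to \U \sqcup \V}, namely the map \m{X \mapsto X + \Zero_\V}, and prove that it is an embedding by showing that each of its fibers is a proposition, since this is exactly the definition of embedding adopted in Section~\ref{existence:terminology}.

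First I would compute the fiber over an arbitrary \m{Z : \U \sqcup \V}. By definition it is \m{\Sigma (X : \U), (X + \Zero_\V = Z)}. Applying univalence in the universe \m{\U \sqcup \V} replaces the identity type \m{X + \Zero_\V = Z} by the equivalence type \m{X + \Zero_\V \simeq Z}, and the canonical equivalence \m{X + \Zero_\V \simeq X} lets me rewrite the fiber, up to equivalence, as
\M{\lift_{\U,\V}^{-1}(Z) \simeq \Sigma (X : \U), (X \simeq Z),}
which is the type of \m{\U}-small copies of \m{Z}.

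The crux is to show that this type of \m{\U}-small copies is a proposition. Given two copies \m{(X_1 , e_1)} and \m{(X_2 , e_2)} with \m{e_i : X_i \simeq Z}, the composite \m{e_2^{-1} \comp e_1 : X_1 \simeq X_2} yields, by univalence in \m{\U}, an identification \m{p : X_1 = X_2}. To identify the two pairs it then remains to check that transporting \m{e_1} along \m{p} in the family \m{X \mapsto (X \simeq Z)} produces \m{e_2}; since transport along an identification coming from univalence precomposes with the inverse of the encoded equivalence, the transported map is \m{e_1 \comp (e_2^{-1} \comp e_1)^{-1}}, which reduces to \m{e_2}. Hence the fiber is a proposition and \m{\lift_{\U,\V}} is an embedding.

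I expect the only real obstacle to be this last transport computation: the fiber reduction is immediate from univalence together with the evident \m{X + \Zero_\V \simeq X}, and the propositionality of the type of \m{\U}-small copies of a type is a standard consequence of univalence, so the substance lies in matching the equivalences \m{e_1} and \m{e_2} by means of the computation rule for univalence rather than in any deeper construction.
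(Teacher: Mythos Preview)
Your proposal is correct and uses exactly the same candidate embedding \m{X \mapsto X + \Zero_\V} as the paper. The paper itself does not spell out the verification that this map is an embedding, instead flagging it as non-routine and pointing to the formal development; you have supplied the standard argument via univalence that the fibers are equivalent to the type of \m{\U}-small copies of \m{Z}, which is a proposition.
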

\begin{proof}
  For example, we have the embedding given by \m{X \mapsto X +
    \Zero_\V}. We don't consider an argument that this is indeed an
  embedding to be entirely
  routine without a significant amount of experience in univalent
  mathematics, even if this may seem obvious. Nevertheless, it is certainly
  safe to leave it as a challenge to the reader, and a proof can be
  found in~\cite{injective:article} in case of doubt.
\end{proof}

\begin{theorem}
  If propositional resizing holds, then any universe \m{\U} is a
  retract of any larger universe \m{\U \sqcup \V} with a section that
  is an embedding.
\end{theorem}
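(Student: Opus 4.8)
The plan is to combine three ingredients already established: the algebraic $\U \sqcup \V, (\U \sqcup \V)^+$-injectivity of the universe $\U \sqcup \V$ (from Lemma~\ref{ref:16:1}), the universe-independence of algebraic injectivity under propositional resizing (Lemma~\ref{universe:independence}), and the embedding $\U \emb \U \sqcup \V$ given by $X \mapsto X + \Zero_\V$ (Lemma~\ref{canonical}). First I would note that $\U \sqcup \V$, being a universe, is algebraically $\U \sqcup \V, (\U \sqcup \V)^+$-injective by Lemma~\ref{ref:16:1}. By Lemma~\ref{universe:independence}, which uses propositional resizing, this upgrades to algebraic $\U', \V'$-injectivity for \emph{any} universes $\U', \V'$; in particular $\U \sqcup \V$ is algebraically $\U, \V$-injective, and indeed injective with respect to the universe levels needed for the next step.

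The second step is to feed the specific embedding $\iota : \U \emb \U \sqcup \V$, $\iota(X) \eqdef X + \Zero_\V$, together with the identity map $\id : \U \to \U$, into the algebraic injectivity structure of the target $\U \sqcup \V$. This requires $\U \sqcup \V$ to be algebraically $\U^+, (\U \sqcup \V)^+$-injective (since $\U : \U^+$ and $\U \sqcup \V : (\U \sqcup \V)^+$), which we have from the resizing step. We then obtain a designated extension $r \eqdef \id \mid \iota : \U \sqcup \V \to \U$ with $r \comp \iota \sim \id$, exactly as in the proof of the earlier lemma that every algebraically injective type is a retract of any type it embeds into. This exhibits $\U$ as a retract of $\U \sqcup \V$ with section $\iota$ and retraction $r$.

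Finally, I would invoke Lemma~\ref{canonical} itself, which does not merely assert that $\iota$ is a section but that it is an \emph{embedding}. Since the section we chose is precisely this $\iota$, the retraction $r$ comes with a section that is an embedding, which is the strengthened conclusion. The diagram
\[
\begin{diagram}
  \U & & \rInto^\iota & & \U \sqcup \V  \\
  & \rdTo_\id &  & \ldEto_{r \eqdef \id \mid \iota} & \\
  & & \U & &
\end{diagram}
\]
summarizes the construction. The main obstacle is not conceptual but bookkeeping: one must check that the universe levels line up so that Lemma~\ref{universe:independence} delivers algebraic injectivity at exactly the levels $\U^+$ and $(\U \sqcup \V)^+$ demanded by the types $\U$ and $\U \sqcup \V$, and that the resizing hypothesis is invoked between the right pairs of universes. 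The fact that $\iota$ is an embedding is the genuinely non-routine input, but that work has already been done in Lemma~\ref{canonical}, so here it is simply cited.
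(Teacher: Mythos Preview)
There is a genuine gap: you invoke the algebraic injectivity of the \emph{wrong} type. Algebraic injectivity of a type \m{D} provides, for any embedding \m{j : X \emb Y} and any map \m{f : X \to D}, an extension \m{f \mid j : Y \to D} landing in \m{D}. You want a retraction \m{r : \U \sqcup \V \to \U}, i.e.\ a map landing in \m{\U}, obtained by extending \m{\id_\U : \U \to \U} along \m{\iota}. For that you need the algebraic injectivity of the \emph{codomain} \m{\U}, not of \m{\U \sqcup \V}. The injectivity of \m{\U \sqcup \V} only yields extensions of maps \emph{into} \m{\U \sqcup \V}; feeding it \m{\id_\U : \U \to \U} is a type mismatch, and feeding it \m{\iota : \U \to \U \sqcup \V} instead merely produces some self-map of \m{\U \sqcup \V} restricting to \m{\iota}, not a retraction onto \m{\U}.

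The paper's proof follows exactly the template you sketch, but with the roles corrected: it starts from the algebraic \m{\U,\U}-injectivity of \m{\U} (Lemma~\ref{ref:16:1}), uses resizing (Lemma~\ref{universe:independence}) to upgrade this to algebraic \m{\U^+,(\U \sqcup \V)^+}-injectivity of \m{\U}, and then applies the retract-from-embedding construction to the embedding \m{\iota} of Lemma~\ref{canonical}. Your diagram and your universe bookkeeping for that final step are correct; only the target of the injectivity hypothesis needs to be swapped from \m{\U \sqcup \V} to \m{\U}.
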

\begin{proof}
  The universe \m{\U} is algebraically \m{\U,\U}-injective
  by~\ref{ref:16:1}, and hence it is algebraically \m{\U^+,(\U \sqcup
    \V)^+}-injective by~\ref{universe:independence}, which has the
  right universe assignments to apply the construction~\ref{ref:16:3}
  that gives a retraction from an embedding of an injective type into
  a larger type, in this case the embedding of the universe \m{\U}
  into the larger universe \m{\U \sqcup \V} constructed
  in~\ref{canonical}.
\end{proof}

As mentioned above, we almost have that the algebraically injective
types are precisely the retracts of exponential powers of universes,
up to a universe mismatch. This mismatch is side-stepped by
propositional resizing. The following is one of the main results of
this paper:

\begin{theorem} \df{(First characterization of algebraic
    injectives.)}  If propositional resizing holds, then a type \m{D}
  in a universe \m{\U} is algebraically \m{\U,\U}-injective if and
  only if \m{D} is a retract of an exponential power of \m{\U} with
  exponent in \m{\U}.
\end{theorem}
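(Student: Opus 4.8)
The plan is to prove the two directions separately, and both are essentially assemblies of the lemmas already established, with propositional resizing used only to fix the universe bookkeeping.

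For the ``only if'' direction, suppose \m{D:\U} is algebraically \m{\U,\U}-injective. I would first upgrade this to algebraic \m{\U,\U^+}-injectivity using the universe-independence Lemma~\ref{universe:independence} (which is where resizing enters). Then I would invoke Lemma~\ref{ref:16:3}, whose universe assignments now match exactly: it tells us that an algebraically \m{\U,\U^+}-injective type \m{D:\U} is a retract of the exponential power \m{D \to \U}, whose exponent \m{D} lives in \m{\U}. That is precisely a retract of an exponential power of \m{\U} with exponent in \m{\U}, so this direction is done.

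For the ``if'' direction, suppose \m{D} is a retract of \m{A \to \U} for some \m{A : \U}. The universe \m{\U} is algebraically \m{\U,\U}-injective in at least two ways by Lemma~\ref{ref:16:1}. By the lemma that exponential powers of algebraically injective types are algebraically injective (the product-closure lemma applied to the constant family), \m{A \to \U} is algebraically \m{\U,\U}-injective. Finally, by the lemma that algebraic injectives are closed under retracts, \m{D} is algebraically \m{\U,\U}-injective. Note that this direction needs no resizing at all.

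The main obstacle — really the only subtlety — is making sure the universe levels line up so that Lemma~\ref{ref:16:3} can be applied; that lemma requires the \emph{specific} combination of \m{\U,\U^+}-injectivity for a type \m{D:\U}, and without resizing we would only have \m{\U,\U}-injectivity, which does not suffice because \m{D \to \U : \U^+} and the extension problem for the embedding \m{\Id : D \emb (D \to \U)} involves the larger universe \m{\U^+} on the codomain side. Propositional resizing, via Lemma~\ref{universe:independence}, is exactly what bridges this gap. Everything else is a routine chaining of the closure properties already proved.
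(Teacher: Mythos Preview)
Your proposal is correct and follows essentially the same route as the paper's own proof: upgrade \m{\U,\U}-injectivity to \m{\U,\U^+}-injectivity via resizing (Lemma~\ref{universe:independence}) and then apply Lemma~\ref{ref:16:3} for the forward direction, and use closure of algebraic injectives under exponential powers and retracts for the converse. Your additional commentary on why the universe mismatch forces the use of resizing is accurate and matches the paper's surrounding discussion.
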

\noindent We emphasize that this is a logical equivalence ``if and
only if'' rather than an \m{\infty}-groupoid equivalence
``\m{\simeq}''. More precisely, the theorem gives two constructions in
opposite directions. So this characterizes the types that \df{can} be
equipped with algebraic-injective structure.
\begin{proof}
  \m{(\Rightarrow)}: Because \m{D} is algebraically
  \m{\U,\U}-injective, it is algebraically \m{\U,\U^+}-injective by
  resizing, and hence it is a retract of \m{D \to \U} because it is
  embedded into it by the identity type former, by taking the
  extension of the identity function along this embedding.

  \m{(\Leftarrow)}: If \m{D} is a retract of \m{X \to \U} for some
  given \m{X:\U}, then, because \m{X \to \U}, being an exponential
  power of the algebraically \m{\U ,\U}-injective type \m{\U}, is
  algebraically \m{\U,\U}-injective, and hence so is \m{D} because it
  is a retract of this power.
\end{proof}

We also have that any algebraically injective \m{(n+1)}-type is a retract
of an exponential power of the universe of \m{n}-types. We establish something
more general first.

\begin{lemma}
  Under propositional resizing, for any subuniverse \m{\Sigma \, A} of
  a universe \m{\U} closed under subsingletons, we have that any
  algebraically \m{\U,\U}-injective type \m{X:\U} whose identity types
  \m{x=_X x'} all satisfy the property \m{A} is a retract of
  the type \m{X \to \Sigma \, A}.
\end{lemma}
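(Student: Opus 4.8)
The plan is to imitate the proof of Lemma~\ref{ref:16:3}, replacing the $\infty$-Yoneda embedding $\Id_X : X \emb (X \to \U)$ by a version that factors through $X \to \Sigma\,A$. First I would use the hypothesis that every identity type $x =_X x'$ satisfies $A$ to obtain a family $a : \Pi(x,x' : X), A(x =_X x')$, and define $y : X \to (X \to \Sigma\,A)$ by $y\,x\,x' \eqdef (x =_X x' , a\,x\,x')$. By construction, using the $\eta$-rules for $\Sigma$ and $\Pi$, we have $\fst \comp (y\,x) = \Id_X\,x$ for every $x:X$, so that $\fst_* \comp y = \Id_X$, where $\fst_* : (X \to \Sigma\,A) \to (X \to \U)$ denotes postcomposition with the first projection $\fst : \Sigma\,A \to \U$.

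Next I would show that $y$ is an embedding. The projection $\fst : \Sigma\,A \to \U$ is an embedding precisely because $A$ is subsingleton-valued, and hence the postcomposition map $\fst_*$ is an embedding too, since its fibers over $h : X \to \U$ are equivalent to $\Pi(x:X), \fst^{-1}(h\,x)$, a product of subsingletons. The map $\Id_X$ is an embedding by the $\infty$-Yoneda lemma proved above. Since $\Id_X = \fst_* \comp y$, the map $\ap_{\Id_X}$ is an equivalence and equals (up to the evident coherence) $\ap_{\fst_*} \comp \ap_y$; as $\ap_{\fst_*}$ is an equivalence, so is $\ap_y$, and therefore $y$ is an embedding.

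Finally I would extend the identity along $y$. Writing $\T$ for the universe with $A : \U \to \T$, the codomain $X \to \Sigma\,A$ lives in $\U \sqcup \T$. By propositional resizing and Lemma~\ref{universe:independence}, the algebraic $\U,\U$-injectivity of $X$ upgrades to algebraic $\U,(\U \sqcup \T)$-injectivity, which has exactly the universe assignments needed to form the extension $r \eqdef \id \mid y : (X \to \Sigma\,A) \to X$ of $\id_X$ along the embedding $y$, with $r \comp y \sim \id$. Hence $X$ is a retract of $X \to \Sigma\,A$, with section $y$ and retraction $r$.

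The two delicate points are the verification that the \emph{factored} Yoneda map $y$ remains an embedding — which I would handle via the cancellation property of $\ap$ rather than by recomputing fibers of $y$ directly — and the universe bookkeeping in the last step, where propositional resizing is exactly what licenses passing from algebraic $\U,\U$-injectivity to the algebraic $\U,(\U \sqcup \T)$-injectivity required to extend along $y$; without resizing the exponent universe $\T$ could be strictly larger than $\U$ and the argument would break.
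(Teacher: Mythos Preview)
Your approach is essentially the paper's: define the factored Yoneda map into \m{X \to \Sigma\,A}, show it is an embedding by cancelling the postcomposition embedding \m{\fst_*} against the known embedding \m{\Id_X}, and then extend \m{\id_X} along it using resizing to bump the injectivity universes. The paper phrases the cancellation as the general fact ``if \m{g \comp f} and \m{g} are embeddings then so is \m{f}'', while you phrase it via \m{\ap}; these are equivalent.

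One bookkeeping slip: the subuniverse \m{\Sigma\,A = \Sigma(Y:\U),\,A\,Y} quantifies over \m{\U}, so it lives in \m{\U^+ \sqcup \T}, not \m{\U \sqcup \T}; hence \m{X \to \Sigma\,A : \U^+ \sqcup \T}, and what you actually need is algebraic \m{\U,(\U^+ \sqcup \T)}-injectivity of \m{X}, exactly as the paper states. Propositional resizing via Lemma~\ref{universe:independence} still delivers this, so the argument goes through, but your claim that \m{\U,(\U \sqcup \T)}-injectivity ``has exactly the universe assignments needed'' is off by a successor.
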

\begin{proof}
  Because the first projection \m{j : \Sigma \, A \to \U} is an
  embedding by the assumption, so is the map \m{k \eqdef j \comp (-) :
    (X \to \Sigma A) \to (X \to \U)} by a general property of
  embeddings. Now consider the map \m{l : X \to (X \to \Sigma \, A)}
  defined by \m{x \mapsto (x' \mapsto (x=x', p \, x \, x'))}, where
  \m{p \, x \, x' : A(x=x')} is given by the assumption. We have that
  \m{k \comp l = \Id_X} by construction. Hence \m{l} is an embedding
  because \m{l} and \m{\Id_X} are, where we are using the general fact
  that if \m{g \comp f} and \m{g} are embeddings then so is the
  factor~\m{f}.  But \m{X}, being algebraically \m{\U,\U}-injective by
  assumption, is algebraically \m{\U,(\U^+ \sqcup \T)}-injective by
  resizing, and hence so is the exponential power \m{X \to \Sigma \,
    A}, and therefore we get the desired retraction by extending its
  identity map along~\m{l}.
\end{proof}

Using this, we get the following as an immediate consequence.

\begin{theorem} \df{(Characterization of algebraic injective \m{(n+1)}-types.)}  If propositional resizing holds, then an
  \m{(n+1)}-type \m{D} in \m{\U} is algebraically \m{\U,\U}-injective
  if and only if \m{D} is a retract of an exponential power of the
  universe of \m{n}-types in \m{\U}, with exponent in \m{\U}.
\end{theorem}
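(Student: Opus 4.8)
The plan is to read off both directions by composing the preceding Lemma with facts already established: that the subuniverse of $n$-types in $\U$ is algebraically $\U,\U$-injective, and that exponential powers and retracts of algebraically $\U,\V$-injective types are again algebraically $\U,\V$-injective. Write $\mathcal{N}$ for the universe of $n$-types in $\U$, i.e.\ the subuniverse $\Sigma\,A$ of $\U$ with $A\,X \eqdef$ ``$X$ is an $n$-type''. Being an $n$-type is a proposition, so this is a genuine subuniverse; it lives in $\U^+$; and it is closed under subsingletons whenever $n \geq -1$, since every subsingleton is a $(-1)$-type and hence an $n$-type. The degenerate case $n = -2$, where $\mathcal{N}$ is a singleton, is handled directly: an algebraically $\U,\U$-injective $(-1)$-type is pointed (being algebraically flabby) and a subsingleton, hence a singleton, hence a retract of the singleton $D \to \mathcal{N}$; and conversely a retract of any $E \to \mathcal{N}$ is a retract of a singleton, hence a singleton, hence algebraically injective. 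So from now on assume $n \geq -1$.

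For $(\Rightarrow)$, suppose $D : \U$ is an $(n+1)$-type that is algebraically $\U,\U$-injective. By the defining property of $(n+1)$-types, every identity type $x =_D x'$ is an $n$-type, hence satisfies $A$. The preceding Lemma, applied with its subuniverse taken to be $\mathcal{N}$ and its type $X$ taken to be $D$, therefore gives that $D$ is a retract of $D \to \mathcal{N}$, which is an exponential power of the universe of $n$-types in $\U$ with exponent $D : \U$, as required.

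For $(\Leftarrow)$, suppose $D$ is a retract of $E \to \mathcal{N}$ for some $E : \U$. By the earlier theorem, $\mathcal{N}$ is algebraically $\U,\U$-injective; since $E \to \mathcal{N}$ is a power of $\mathcal{N}$, it is algebraically $\U,\U$-injective too; and since $D$ is a retract of it, $D$ is algebraically $\U,\U$-injective. This direction uses no resizing.

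The main work is already packaged in the preceding Lemma, so there is little new to do; the step that needs care is the universe bookkeeping, and that is exactly where resizing is consumed. Since $\mathcal{N}$ sits in $\U^+$ rather than in $\U$, the instance of the preceding Lemma used in $(\Rightarrow)$ relies on the resizing step inside that Lemma, which first upgrades algebraic $\U,\U$-injectivity to algebraic $\U,(\U^+ \sqcup \T)$-injectivity and only then takes a power. Thus the one thing to watch is that the universe levels in the preceding Lemma, the $n$-types theorem, and the power/retract closure lemmas all line up, together with the separate check in the $n = -2$ corner recorded above.
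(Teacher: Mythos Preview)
Your argument is correct and follows exactly the route the paper intends: the paper records this theorem as ``an immediate consequence'' of the preceding lemma together with the earlier facts about the algebraic injectivity of the subuniverse of \m{n}-types and closure under powers and retracts, and you have simply spelled out those steps. Your explicit treatment of the boundary case \m{n=-2}, where the subuniverse of singletons fails to contain all subsingletons and so the preceding lemma does not literally apply, is a nice piece of care that the paper leaves implicit.
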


\begin{corollary}
  The algebraically injective sets in \m{\U} are the retracts of
  powersets of (arbitrary) types in \m{\U}, assuming propositional resizing.
\end{corollary}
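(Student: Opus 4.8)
The plan is to obtain this corollary as an immediate specialization of the preceding characterization of algebraically injective \m{(n+1)}-types to the case \m{n = -1}, using two small bookkeeping observations: the sets are exactly the \m{(n+1)}-types for \m{n=-1}, and the universe of \m{(-1)}-types in \m{\U} is the type \m{\Omega_\U} of subsingletons, so that an exponential power \m{X \to \Omega_\U} with \m{X:\U} is precisely the powerset of \m{X}.

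For the forward direction, I would start from an algebraically \m{\U,\U}-injective set \m{D:\U}. Since \m{D} is a \m{0}-type, i.e.\ an \m{(n+1)}-type with \m{n=-1}, the characterization theorem for algebraically injective \m{(n+1)}-types applies and exhibits \m{D} as a retract of an exponential power of the universe of \m{(-1)}-types in \m{\U}, with exponent in \m{\U}. Rewriting that universe as \m{\Omega_\U}, we get that \m{D} is a retract of \m{X \to \Omega_\U} for some \m{X:\U}, that is, a retract of a powerset of a type in \m{\U}.

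For the converse, suppose \m{D} is a retract of a powerset \m{X \to \Omega_\U} with \m{X:\U}. Powersets are algebraically \m{\U,\U}-injective (being exponential powers of \m{\Omega_\U}, which is the subuniverse of \m{(-1)}-types and hence algebraically \m{\U}-flabby, so algebraically \m{\U,\U}-injective), and algebraic injectives are closed under retracts in any universe; hence \m{D} is algebraically \m{\U,\U}-injective. Moreover \m{\Omega_\U} is a set, so \m{X \to \Omega_\U} is a set, and a retract of a set is a set, so \m{D} is a set. Thus \m{D} is an algebraically injective set in \m{\U}.

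The only ingredient beyond citing earlier results is that retracts of sets are sets, which is the standard fact that truncation levels are inherited by retracts; so I expect no real obstacle here — the content is entirely in the already-established \m{(n+1)}-type characterization, and this corollary is just its \m{n=-1} instance with the universe of propositions named \m{\Omega_\U}.
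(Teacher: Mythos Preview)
Your proposal is correct and matches the paper's approach: the corollary is left essentially without proof in the paper, being the instance \m{n=-1} of the preceding theorem, and the only additional remark the paper makes is precisely your observation that powersets are sets (because \m{\Omega_\U} is a set and sets form an exponential ideal), which together with closure of sets under retracts handles the converse direction.
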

\noindent Notice that the powerset of any type is a set, because
\m{\Omega_\U} is a set and because sets (and more generally
\m{n}-types) form an exponential ideal.

\section{Injectivity in terms of algebraic injectivity in the absence of resizing}

We now compare injectivity with algebraic injectivity. The following observation follows from the fact that retractions are surjections:
\begin{lemma}
  If a type \m{D} in a universe \m{\W} is algebraically
  \m{\U,\V}-injective, then it is \m{\U,\V}-injective
\end{lemma}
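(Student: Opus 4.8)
The plan is to notice that algebraic $\U,\V$-injectivity hands us precisely a section of the restriction map, and that any map with a section is a retraction, hence a surjection; $\U,\V$-injectivity is then just the statement that this surjection has inhabited fibres.

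Concretely, I would fix $X:\U$, $Y:\V$, an embedding $j:X\emb Y$, and a function $f:X\to D$, and consider the restriction map $\rho\eqdef(-)\comp j:(Y\to D)\to(X\to D)$. By the assumed algebraic $\U,\V$-injectivity of $D$, for this particular $j$ we obtain the extension operator $(-)\mid j$ together with, for every $h:X\to D$, a pointwise identification $((h\mid j)\comp j)\sim h$; by function extensionality this is an identification $\rho(h\mid j)=h$, so $h\mapsto h\mid j$ is a section of $\rho$, and $\rho$ is therefore a retraction.

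I would then invoke the standard fact that every retraction is a surjection: having a section, $\rho$ has all of its fibres inhabited, since for each $h:X\to D$ the pair consisting of the section value $h\mid j$ and the identification recovered above is a point of the fibre $\rho^{-1}(h)$, so inserting it into the propositional truncation and specializing to $h\eqdef f$ yields an element of $\exists(g:Y\to D),\, g\comp j = f$. Finally, $g\comp j=f$ is equivalent to $g\comp j\sim f$ by function extensionality, so we have produced the required element of $\exists(g:Y\to D),\, g\comp j\sim f$. Since $X$, $Y$, $j$ and $f$ were arbitrary, this establishes $\U,\V$-injectivity of $D$.

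There is essentially no obstacle here: the only thing to keep straight is the translation between pointwise equality $\sim$ and equality, which is uniform given function extensionality (which we have), together with quoting the lemma that retractions are surjections. Alternatively one could bypass the surjectivity detour entirely and simply return the insertion of $((f\mid j),\sigma)$ into the propositional truncation, where $\sigma$ is the supplied homotopy, as the required inhabitant of $\exists(g:Y\to D),\, g\comp j\sim f$ --- this existential being by definition the propositional truncation of $\Sigma(g:Y\to D),\, g\comp j\sim f$ --- but the phrasing via ``retractions are surjections'' is tidier and matches the sentence introducing the lemma.
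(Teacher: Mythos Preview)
Your proof is correct and matches the paper's approach exactly: the paper's entire argument is the single remark that the lemma ``follows from the fact that retractions are surjections,'' which is precisely the route you take (and you even flag this phrasing at the end). The alternative you mention---directly inserting the witness \m{(f\mid j,\sigma)} into the truncation---is of course also fine and is really the same argument unpacked.
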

\noindent The following observation follows from the fact that propositions are
closed under products.
\begin{lemma}
  Injectivity is a proposition.
\end{lemma}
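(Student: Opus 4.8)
The plan is to unfold the definition of \m{\U,\V}-injectivity of a type \m{D:\W} and observe that it is a \m{\Pi}-type whose final component, after stripping the leading \m{\Pi}s over \m{X:\U}, \m{Y:\V}, \m{j:X\emb Y} and \m{f:X\to D}, is the type \m{\exists(g:Y\to D), g\comp j\sim f}. Since \m{\exists} is by definition a propositional truncation, this innermost type is a proposition. The standard fact that I would invoke is that \m{\Pi}-types preserve propositions: if \m{A} is any type and \m{B:A\to\U} is a family of propositions, then \m{\Pi_A B} is a proposition. This is an immediate consequence of function extensionality, which we have assumed (it follows from univalence).

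Concretely, I would first establish (or recall) the lemma that a dependent product of propositions is a proposition: given \m{\phi,\psi:\Pi_A B} and \m{a:A}, the type \m{B\,a} is a proposition so \m{\phi\,a = \psi\,a}; by function extensionality these pointwise identifications assemble into \m{\phi=\psi}. Then I would apply this four times, peeling off one \m{\Pi} at a time: the body of the innermost \m{\Pi} (over \m{f}) is \m{\trunc{\Sigma(g:Y\to D), g\comp j\sim f}}, which is a proposition by the defining property of propositional truncation; hence \m{\Pi(f:X\to D), \dots} is a proposition; hence \m{\Pi(j:X\emb Y), \dots} is a proposition; hence \m{\Pi(Y:\V), \dots} is a proposition; hence the whole type \m{\Pi(X:\U), \dots} is a proposition.

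I do not expect any genuine obstacle here — this is essentially a one-line argument once the product-of-propositions fact is in hand, and that fact is completely standard in univalent mathematics. The only thing to be slightly careful about is making sure the \emph{innermost} type really is a proposition, i.e.\ that it is the truncated \m{\Sigma} rather than the untruncated one; this is exactly the difference between injectivity and algebraic injectivity, and it is why the analogous statement fails for algebraic injectivity (a \m{\Sigma} of propositions need not be a proposition). One could also phrase the whole proof in one step by noting that being a proposition is closed under arbitrary (possibly iterated, possibly dependent) \m{\Pi}, so the nesting of the four quantifiers is immaterial.

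\begin{proof}
  Recall that a dependent product \m{\Pi (a:A), B\,a} of a family of
  propositions \m{B : A \to \U} is itself a proposition: given
  \m{\phi,\psi : \Pi(a:A), B\,a}, for each \m{a:A} we have
  \m{\phi\,a = \psi\,a} since \m{B\,a} is a proposition, and hence
  \m{\phi = \psi} by function extensionality. Now, by definition, the
  type expressing the \m{\U,\V}-injectivity of \m{D} is
  \M{
    \Pi(X:\U)\,\Pi(Y:\V)\,\Pi(j:X\emb Y)\,\Pi(f:X\to D),\, \exists(g:Y\to D),\, g\comp j\sim f,
  }
  and its innermost body \m{\exists(g:Y\to D), g\comp j\sim f} is,
  by definition, the propositional truncation
  \m{\trunc{\Sigma(g:Y\to D), g\comp j\sim f}}, which is a
  proposition. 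Applying the closure of propositions under \m{\Pi}
  successively to the quantifiers over \m{f}, then \m{j}, then \m{Y},
  then \m{X}, we conclude that the whole type is a proposition.
\end{proof}
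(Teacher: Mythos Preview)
Your proof is correct and follows essentially the same approach as the paper, which simply remarks that the result ``follows from the fact that propositions are closed under products.'' You have merely spelled out that one-line observation in detail, including the appeal to function extensionality and the fact that the innermost body is a propositional truncation.
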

\noindent But of course algebraic injectivity is not. From this we immediately
get the following by the universal property of propositional
truncation:

\begin{lemma}
  For any type \m{D} in a universe \m{\W}, the truncation of the
  algebraic \m{\U,\V}-injectivity of \m{D} gives its
  \m{\U,\V}-injectivity.
\end{lemma}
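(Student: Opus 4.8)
The plan is to invoke the universal property of propositional truncation, using exactly the two lemmas just established. Write \m{A \eqdef (\text{\m{D} is algebraically \m{\U,\V}-injective})} and \m{I \eqdef (\text{\m{D} is \m{\U,\V}-injective})}. The immediately preceding lemma (``retractions are surjections'') supplies a function \m{\alpha : A \to I}, and the lemma before it tells us that \m{I} is a proposition.

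First I would observe that the elimination rule for propositional truncation, which we have assumed for every pair of universes, applies to the proposition \m{I}: it converts the function \m{\alpha : A \to I} into a function \m{\bar{\alpha} : \trunc{A} \to I}. This \m{\bar{\alpha}} is precisely the claimed map from the truncation of algebraic \m{\U,\V}-injectivity to \m{\U,\V}-injectivity, so the proof is complete.

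There is essentially no obstacle here; the only point to check is the bookkeeping of universe levels, namely that \m{I} lives in a universe into which we are permitted to eliminate \m{\trunc{A}}, and this is immediate since propositional truncation is assumed between arbitrary universes. If one wished to spell \m{\bar{\alpha}} out concretely, one would note that, given a point of \m{\trunc{A}}, the goal is to produce for each embedding \m{j : X \emb Y} and each \m{f : X \to D} an inhabitant of \m{\exists (g : Y \to D), g \comp j \sim f}; this target is itself a proposition, so the truncation may be stripped from the hypothesis, a genuine extension operator \m{(-)\mid j} obtained, applied to \m{f}, and the resulting designated extension inserted into the existential — which is exactly the argument of the first cited lemma.
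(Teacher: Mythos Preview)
Your argument is correct and is exactly the paper's approach: the paper simply says the result follows ``by the universal property of propositional truncation'' from the two preceding lemmas, which is precisely what you spell out. One trivial slip: you have the order of the two cited lemmas reversed (in the paper, the lemma that injectivity is a proposition is the immediately preceding one, and the ``retractions are surjections'' lemma comes before that), but this does not affect the mathematics.
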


In order to relate injectivity to the propositional truncation of
algebraic injectivity in the other direction, we first establish some
facts about injectivity that we already proved for algebraic
injectivity. These facts cannot be obtained by reduction (in
particular products of injectives are not necessarily injective, in
the absence of choice, but exponential powers are).

\begin{lemma} \label{embedding-||retract||}
  Any \m{\W,\V}-injective type \m{D} in a universe \m{\W} is a retract
  of any type in \m{\V} it is embedded into, in an unspecified way.
\end{lemma}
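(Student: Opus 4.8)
We want: if $D : \W$ is $\W,\V$-injective (note: the *first* universe parameter is $\W$ itself, the universe where $D$ lives), then for any $Y : \V$ and any embedding $j : D \emb Y$, the type "$D$ is a retract of $Y$" holds in an unspecified way, i.e., $\trunc{\text{$D$ is a retract of $Y$}}$.

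**The plan.** The plan is to mimic the proof of the earlier lemma "Every algebraically $\U,\V$-injective type $D:\W$ is a retract of any type $Y:\V$ in which it is embedded into", but with the truncation inserted in the right place. Recall that in the algebraic case we extended $\id_D$ along the embedding $j : D \emb Y$ to get a retraction $r \eqdef \id \mid j : Y \to D$ with $r \comp j \sim \id$. In the merely-injective case we do not have a designated extension operator; instead, $\W,\V$-injectivity applied to $X \eqdef D$, $Y$, the embedding $j$, and the map $f \eqdef \id_D : D \to D$ directly gives us
\[
  \exists (g : Y \to D),\ g \comp j \sim \id_D.
\]
This is exactly $\trunc{\Sigma (g : Y \to D),\ g \comp j \sim \id_D}$. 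The plan is then to observe that the type $\Sigma (g : Y \to D),\ g \comp j \sim \id_D$ is precisely (or canonically equivalent to) the type "$D$ is a retract of $Y$" — indeed a retract datum is a section, a retraction, and a pointwise identification $r \comp s \sim \id$; here the section is $j$ itself (ignoring that it is an embedding), the retraction is $g$, and the identification is the given $g \comp j \sim \id_D$. So one transports the inhabitant of the truncation along this equivalence (using that propositional truncation is functorial, or just applying $\bar{(-)}$ to the map into the truncated target), concluding $\trunc{\text{$D$ is a retract of $Y$}}$.

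**Why the universe parameters work out.** The only subtlety is checking that the instance of $\W,\V$-injectivity we invoke is legitimate: we need $X : \W$ and $Y : \V$. We take $X \eqdef D$, which lives in $\W$ by hypothesis, and $Y$ lives in $\V$ by hypothesis, so the embedding $j : D \emb Y$ is an embedding between a type in $\W$ and a type in $\V$, and $f \eqdef \id_D : D \to D$ is a map $X \to D$. Hence the defining $\Pi$-statement of $\W,\V$-injectivity applies verbatim. This is exactly why the statement is phrased with first parameter $\W$ rather than an arbitrary $\U$: we must be able to plug $D$ itself into the domain slot.

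**Main obstacle.** There is essentially no obstacle — this is a short unfolding argument. The one place to be slightly careful is the bookkeeping that the unspecified/truncated "retract" in the conclusion matches the truncation of the $\Sigma$-type produced by injectivity; this is just the remark that "$A$ is a retract of $B$" unfolds to $\Sigma(s : A \to B)\,\Sigma(r : B \to A),\ r \comp s \sim \id$, so our datum $(j, g, h)$ with $h : g \comp j \sim \id$ is literally of that form, and then one uses the universal property of $\trunc{-}$ (functoriality of propositional truncation) to pass from $\trunc{\Sigma(g : Y\to D),\, g\comp j\sim \id}$ to $\trunc{\text{$D$ is a retract of $Y$}}$. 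No univalence or function extensionality is even needed beyond what is already tacitly assumed.
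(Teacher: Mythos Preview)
Your proposal is correct and follows essentially the same approach as the paper: apply \m{\W,\V}-injectivity to the identity map \m{\id_D} along the given embedding \m{j : D \emb Y} to obtain an unspecified extension \m{g} with \m{g \comp j \sim \id}, and then use the functoriality of propositional truncation to turn the datum \m{(j,g,h)} into an unspecified retraction. Your explicit check of the universe parameters (that \m{X \eqdef D : \W} is what makes the first parameter \m{\W} rather than an arbitrary \m{\U}) is a useful clarification the paper leaves implicit.
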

\begin{proof}
  Given \m{Y:\V} with an embedding \m{j : D \to Y}, by the
  \m{\W,\V}-injectivity of \m{D} there is an \df{unspecified} \m{r : Y
    \to D} with \m{r \comp j \sim \id}. Now, if there is a
  \df{specified} \m{r : Y \to D} with \m{r \comp j \sim \id} then
  there is a specified retraction. Therefore, by the functoriality of
  propositional truncation on objects applied to the previous
  statement, there is an unspecified retraction.
\end{proof}

\begin{lemma}
  If a type \m{D' : \U'} is a retract of a type \m{D : \U} then the \m{\W,\T}-injectivity of
  \m{D} implies that of \m{D'}.
\end{lemma}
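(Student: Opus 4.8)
The plan is to transcribe the proof that algebraic injectivity is closed under retracts, inserting a single use of the functoriality of propositional truncation on maps. Fix the given section-retraction data, say \m{s : D' \to D} and \m{r : D \to D'} with \m{r \comp s \sim \id}. To prove the \m{\W,\T}-injectivity of \m{D'}, fix \m{X : \W}, \m{Y : \T}, an embedding \m{j : X \emb Y}, and a map \m{f : X \to D'}; the goal is to produce a term of \m{\trunc{\Sigma (g : Y \to D'), g \comp j \sim f}}.

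First I would apply the hypothesis, the \m{\W,\T}-injectivity of \m{D}, to the composite \m{s \comp f : X \to D} along the same embedding \m{j}, obtaining a term of \m{\trunc{\Sigma (g' : Y \to D), g' \comp j \sim s \comp f}}. Next, working at the untruncated level, I would construct a map from \m{\Sigma (g' : Y \to D), g' \comp j \sim s \comp f} to \m{\Sigma (g : Y \to D'), g \comp j \sim f} sending \m{(g', h)} to \m{(r \comp g', h')}, where for each \m{x : X} the identification \m{h' \, x} is the concatenation of \m{\ap \, r \, (h \, x) : r(g'(j\,x)) = r(s(f\,x))} with the instance \m{r(s(f\,x)) = f\,x} of the retraction homotopy. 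Finally I would lift this map to the propositional truncations, using the functoriality of propositional truncation on maps, and apply the lifted map to the term obtained in the first step, which completes the construction.

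The only step requiring attention is the pointwise path bookkeeping in the definition of \m{h'}; everything else is a verbatim copy of the algebraic argument (the earlier lemma on closure of algebraic injectivity under retracts) together with one appeal to the universal property of truncation, so I do not anticipate a genuine obstacle. In particular, no manipulation of universes is needed: the retract \m{D'} may lie in an arbitrary universe \m{\U'}, and the injectivity indices \m{\W} and \m{\T} are left untouched throughout; contrary to the case of arbitrary products, for retracts a direct reduction of this kind does go through.
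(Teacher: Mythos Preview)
Your proof is correct and follows essentially the same approach as the paper: apply the injectivity of \m{D} to \m{s \comp f}, post-compose a designated extension with \m{r} to obtain a designated extension of \m{f}, and then use the functoriality of propositional truncation to pass from the untruncated construction to the truncated conclusion. The paper is terser about the pointwise identification \m{h'}, but the structure is identical.
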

\begin{proof}
  Let \m{r : D \to D'} and \m{s : D' \to D} be the given section
  retraction pair, and, to show that \m{D'} is \m{\W,\T}-injective,
  let an embedding \m{j : X \to Y} and a function \m{f : X \to D'} be
  given. By the injectivity of \m{D}, we have some unspecified
  extension \m{f' : Y \to D} of \m{s \comp f : X \to D}.  If such a
  designated extension is given, then we get the designated extension
  \m{r \comp f'} of \m{f}. By the functoriality of propositional
  truncation on objects and the previous two statements, we get the
  required, unspecified extension.
\end{proof}

The universe assignments in the following are probably not very
friendly, but we are aiming for maximum generality.
\begin{lemma}
  If a type \m{D : \W} is \m{(\U \sqcup \T),(\V \sqcup \T)}-injective,
  then the exponential power \m{A \to D} is \m{\U,\V}-injective for any \m{A:\T}.
\end{lemma}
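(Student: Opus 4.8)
The plan is to reduce the $\U,\V$-injectivity of $A \to D$ to the hypothesised $(\U\sqcup\T),(\V\sqcup\T)$-injectivity of $D$ by an uncurrying argument. So let types $X:\U$ and $Y:\V$, an embedding $j:X\emb Y$, and a function $f:X\to(A\to D)$ be given; we must produce an \emph{unspecified} $g:Y\to(A\to D)$ with $g\comp j\sim f$. First I would uncurry $f$, defining $\hat f:X\times A\to D$ by $\hat f(x,a)\eqdef f\,x\,a$, and consider the map $j\times\id_A:X\times A\to Y\times A$ sending $(x,a)$ to $(j\,x,a)$.

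The key point is that $j\times\id_A$ is again an embedding: its fibre over a point $(y,a)$ is equivalent to $j^{-1}(y)\times\id_A^{-1}(a)$, and the second factor is a singleton, so the fibre is equivalent to $j^{-1}(y)$, which is a subsingleton because $j$ is an embedding. (Alternatively, one invokes the general fact that embeddings are closed under products.) Now comes the universe bookkeeping that dictates the statement: since $X:\U$ and $A:\T$ we have $X\times A:\U\sqcup\T$, and likewise $Y\times A:\V\sqcup\T$; this is precisely why we need $D$ to be $(\U\sqcup\T),(\V\sqcup\T)$-injective and not merely $\U,\V$-injective. Applying this injectivity of $D$ to the embedding $j\times\id_A$ and the map $\hat f$, we obtain an unspecified $\hat g:Y\times A\to D$ with $\hat g\comp(j\times\id_A)\sim\hat f$.

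Finally I would curry back, setting $g:Y\to(A\to D)$ by $g\,y\,a\eqdef\hat g(y,a)$. For $x:X$ and $a:A$ we then have $g(j\,x)\,a=\hat g(j\,x,a)=\hat g\bigl((j\times\id_A)(x,a)\bigr)=\hat f(x,a)=f\,x\,a$, so by function extensionality $g(j\,x)=f\,x$, that is, $g\comp j\sim f$. Since this whole construction is a function from the type of pairs $(\hat g,\text{homotopy})$ to the type of pairs $(g,\text{homotopy})$, applying the functoriality of propositional truncation on objects to the unspecified $\hat g$ delivered above yields the required unspecified $g$.

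The main obstacle is, rather than a technical difficulty, a pitfall to be avoided: one is tempted to swap arguments instead, viewing $f$ as an element of $A\to(X\to D)$ and extending each $f\,(-)\,a:X\to D$ along $j$ separately; but the extensions provided by injectivity are unspecified and cannot be chosen uniformly in $a$ without a choice principle — this reflects the fact, noted earlier, that products of injective types need not be injective in the absence of choice. The uncurrying trick circumvents this by extending a single function along a single embedding, so that only one appeal to propositional truncation is made.
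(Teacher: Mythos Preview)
Your proof is correct and follows essentially the same approach as the paper: uncurry, extend along the embedding $j\times\id_A$ (checking it is an embedding), curry back, and invoke functoriality of truncation on objects. Your account is in fact more detailed than the paper's, including the explicit universe bookkeeping and the helpful remark on why the naive ``extend pointwise in $a$'' approach fails without choice.
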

\begin{proof}
  For a given embedding \m{j : X \to Y} and a given map \m{f : X \to
    (A \to D)}, take the exponential transpose \m{g : X \times A \to
    D} of \m{f}, then extend it along the embedding \m{j \times \id :
    X \times A \to Y \times A} to get \m{g' : Y \times A \to D} and
  then back-transpose to get \m{f' : Y \to (A \to D)}, and check that
  this construction of \m{f'} does give an extension of \m{f} along
  \m{j}. For this, we need to know that if \m{j} is an embedding then
  so is \m{j \times \id}, but this is not hard to check. The result
  then follows by the functoriality-on-objects of the propositional
  truncation.
\end{proof}

\begin{lemma}
  If a type \m{D:\U} is \m{\U,\U^+} injective, then it is a retract of
  \m{D \to \U} in an unspecified way.
\end{lemma}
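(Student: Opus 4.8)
The plan is to mimic the proof of Lemma~\ref{ref:16:3}, replacing algebraic injectivity by plain injectivity throughout, so that the designated retraction obtained there becomes an unspecified one here. Concretely, recall from the $\infty$-Yoneda embedding lemma that the identity type former $\Id_D : D \to (D \to \U)$ is an embedding of $D$ into the exponential power $D \to \U$. Since $D:\U$, this power lives in the successor universe $\U^+$, which is exactly why we need the hypothesis of $\U,\U^+$-injectivity rather than $\U,\U$-injectivity.

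First I would invoke this embedding together with the hypothesis that $D$ is $\U,\U^+$-injective: the type $D$ is thereby embedded into a type in the universe $\U^+$, namely $D \to \U$. By Lemma~\ref{embedding-||retract||}, which says that any $\W,\V$-injective type is a retract, in an unspecified way, of any type in $\V$ into which it is embedded, taking $\W \eqdef \U$ and $\V \eqdef \U^+$, I conclude that $D$ is a retract of $D \to \U$ in an unspecified way. Unfolding what that lemma does in this instance: the $\U,\U^+$-injectivity of $D$ supplies an unspecified $r : (D \to \U) \to D$ with $r \comp \Id_D \sim \id$, obtained by extending $\id : D \to D$ along the Yoneda embedding, and then the functoriality of propositional truncation on objects promotes this unspecified retraction to an unspecified full section--retraction pair exhibiting $D$ as a retract of $D \to \U$.

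I do not expect any real obstacle: the single point that needs care is the universe bookkeeping, namely that $D \to \U : \U^+$ forces the use of $\U,\U^+$-injectivity (just as in Lemma~\ref{ref:16:3} in the algebraic case), while everything else is an immediate appeal to the $\infty$-Yoneda embedding lemma and to Lemma~\ref{embedding-||retract||}.
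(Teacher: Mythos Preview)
Your proposal is correct and follows essentially the same approach as the paper: the paper's proof simply says that the result is an immediate consequence of Lemma~\ref{embedding-||retract||} together with the fact that the identity type former \m{\Id_D : D \to (D \to \U)} is an embedding, which is exactly what you do (with the same universe bookkeeping \m{\W \eqdef \U}, \m{\V \eqdef \U^+}). Your unfolding of what Lemma~\ref{embedding-||retract||} does in this instance is accurate but more detailed than the paper's one-line proof.
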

\begin{proof}
  This is an immediate consequence of~\ref{embedding-||retract||} and the fact that the
  identity type former \m{\Id_X : X \to (X \to \U)} is an embedding.
\end{proof}

With this we get an almost converse to the fact that truncated
algebraic injectivity implies injectivity: the universe levels are
different in the converse:

\begin{lemma}
  If a type \m{D:\U} is \m{\U,\U^+}-injective, then it is algebraically \m{\U,\U^+}-injective in an unspecified way.
\end{lemma}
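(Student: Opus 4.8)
The plan is to reduce to the retract picture and then invoke the functoriality of propositional truncation. By the preceding lemma, the $\U,\U^+$-injectivity of $D$ already gives, in an unspecified way, a section-retraction pair exhibiting $D$ as a retract of $D \to \U$. Since the statement to be proved is a proposition and propositional truncation is functorial on objects, it therefore suffices to exhibit a map from the (untruncated) type of such retract data — augmented, if needed, by the $\U,\U^+$-injectivity hypothesis itself, which is a proposition and hence freely available — to the type of algebraic $\U,\U^+$-injectivity structures on $D$; applying functoriality on objects then yields the desired unspecified structure.

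So, starting from a designated retract $s : D \to (D \to \U)$, $r : (D \to \U) \to D$ with $r \comp s \sim \id$ — which we may take with $s = \Id_D$, so that $s$ is an embedding by the $\infty$-Yoneda embedding lemma — the task is to manufacture a designated extension operator for $D$ along embeddings $j : X \emb Y$ with $X : \U$ and $Y : \U^+$. The natural move is to transpose $s \comp f : X \to (D \to \U)$ to a family $X \times D \to \U$ and push it forward by the $\U$-indexed Kan extension $(-) \edown (j \times \id_D)$ along $j \times \id_D : X \times D \to Y \times D$; since $Y : \U^+$, this overshoots into a family valued in $\U^+$, i.e.\ it produces $Y \to (D \to \U^+)$ rather than $Y \to (D \to \U)$, and we must bring it back down to $D$ by composing with a retraction of the embedding $D \hookrightarrow (D \to \U^+)$ — which is where the injectivity hypothesis is used once more — after which one checks the extension property and packages everything using closure of algebraic injectives under retracts.

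The main obstacle is precisely this universe bookkeeping: the hypothesis only manifestly produces a retract of $D \to \U$, which, being an exponential power of the universe $\U$, is only algebraically $\U,\U$-injective (Lemma~\ref{ref:16:1}), whereas the target demands algebraic $\U,\U^+$-injectivity; obtaining the extra universe level without a resizing axiom is exactly the ``choice that just holds'' phenomenon flagged in the introduction, and the delicate part of the proof is tracking the levels carefully enough that the overshooting Kan extension can be corrected rather than paid for — possibly routing through algebraic flabbiness and Lemmas~\ref{for27:1} and~\ref{for27:2} to keep the parameters manageable. A minor but essential point is that the section $s$ must be chosen to be an embedding, so that the composite embeddings appearing in the construction really are embeddings.
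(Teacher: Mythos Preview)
Your first paragraph is exactly the paper's (implicit) argument: the preceding lemma gives that \m{D} is, in an unspecified way, a retract of \m{D \to \U}; since \m{D \to \U} is an exponential power of \m{\U} and hence algebraically \m{\U,\U}-injective, and since algebraic injectivity is preserved by retracts, functoriality of truncation yields that \m{D} is algebraically \m{\U,\U}-injective in an unspecified way. The paper gives no proof beyond the words ``With this we get\dots''.

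You have, however, correctly spotted a genuine problem with the \emph{statement} as printed: the lemma claims algebraic \m{\U,\U^+}-injectivity, but the argument just sketched only delivers \m{\U,\U}. This is not a gap in your reasoning; it is a typo in the paper. The paper's own summary a few lines below the lemma reads ``if \m{D} is \m{\U,\U^+}-injective then it is algebraically \m{\U,\U}-injective in an unspecified way'', confirming that \m{\U,\U} is the intended conclusion. The subsequent Proposition~\ref{worse} also only needs the \m{\U,\U} version (it upgrades to \m{\U,\U^+} via propositional resizing).

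Your second and third paragraphs attempt to rescue the stronger \m{\U,\U^+} conclusion without resizing, and you are right to flag this as the obstacle: \m{\U} is only known to be algebraically \m{\U,\U}-injective, not \m{\U,\U^+}-injective, so no amount of Kan-extension gymnastics along \m{j \times \id_D} will close the gap---the overshoot into \m{\U^+} that you describe cannot be corrected without either resizing or an independent source of \m{\U,\U^+}-algebraic structure on \m{D}, which is precisely what you are trying to produce. That part of your proposal is therefore not a proof but a correct diagnosis of why the printed statement is too strong.
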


So, in summary, regarding the relationship between injectivity and
truncated algebraic injectivity, so far we know that
\begin{quote}
  if \m{D} is algebraically \m{\U,\V}-injective in an unspecified way
  then it is \m{\U,\V}-injective,
\end{quote}
and, not quite conversely,
\begin{quote}
  if \m{D} is \m{\U,\U^+}-injective then it is algebraically \m{\U,\U}-injective in an unspecified way.
\end{quote}
Therefore, using propositional resizing, we get the following
characterization of a particular case of injectivity in terms of
algebraic injectivity.

\begin{proposition} \label{worse} \df{(Injectivity in terms of algebraic injectivity.)}
  If propositional resizing holds, then a type \m{D : \U} is \m{\U,\U^+}-injective if and only if it is algebraically \m{\U,\U^+}-injective in an unspecified way.
\end{proposition}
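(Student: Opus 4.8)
The plan is to prove the two implications of the logical equivalence separately; only the forward direction will invoke resizing. The backward direction uses no resizing and is immediate: since $\U,\U^+$-injectivity is a proposition that is implied by algebraic $\U,\U^+$-injectivity, the universal property of the propositional truncation turns ``algebraically $\U,\U^+$-injective in an unspecified way'' into ``$\U,\U^+$-injective''. This is exactly the earlier lemma that the truncation of algebraic $\U,\V$-injectivity gives $\U,\V$-injectivity, instantiated with $\V$ taken to be $\U^+$ and with $D$ living in $\U$.

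For the forward direction, assume $D:\U$ is $\U,\U^+$-injective. The identity-type former is an embedding $\Id_D : D \to (D \to \U)$ (the $\infty$-Yoneda embedding lemma proved above), and $D \to \U$ lives in $\U^+$, so Lemma~\ref{embedding-||retract||} gives that $D$ is a retract of $D \to \U$ in an unspecified way. Now resizing enters: $\U$ is algebraically $\U,\U$-injective by Lemma~\ref{ref:16:1}, and hence algebraically $\U,\U^+$-injective by Lemma~\ref{universe:independence} (this is the one and only use of propositional resizing in the argument); consequently the exponential power $D \to \U$, being a power of an algebraically $\U,\U^+$-injective type, is itself algebraically $\U,\U^+$-injective by the closure of algebraic injectives under exponential powers. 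Since algebraic injectivity is also inherited by retracts, pushing this fact along the unspecified retraction through the functoriality-on-objects of the propositional truncation yields that $D$ is algebraically $\U,\U^+$-injective in an unspecified way. Combining the two implications gives the asserted equivalence.

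I expect the main subtlety --- and the reason resizing cannot be dispensed with here --- to be the universe bookkeeping behind the step ``$\U$ is algebraically $\U,\U^+$-injective''. Outright, $\U$ is known only to be algebraically $\U,\U$-injective: the left and right Kan extensions that witness this are built from fibers of maps into $\U$, and over a $\U^+$-indexed subsingleton such a fiber would a priori land in $\U^+$ rather than $\U$, so the bare construction does not deliver $\U,\U^+$-injectivity; resizing is precisely what repairs this off-by-one level mismatch. A secondary point to bear in mind throughout the forward direction is that the retraction of $D$ onto $D \to \U$ is only unspecified, so every subsequent transfer of structure must likewise be carried out ``in an unspecified way'', i.e.\ under the truncation via its functoriality, rather than on chosen data.
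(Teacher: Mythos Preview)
Your proof is correct and follows essentially the paper's route. The paper does not give an explicit proof of this proposition; it presents it as a direct consequence of the two bullet points summarized just before it, together with resizing. The only cosmetic difference is where resizing is invoked: the paper first uses the (resizing-free) lemma that \m{\U,\U^+}-injectivity of \m{D} yields algebraic \m{\U,\U}-injectivity of \m{D} in an unspecified way, and then applies Lemma~\ref{universe:independence} under the truncation to upgrade \m{\U,\U} to \m{\U,\U^+}; you instead apply resizing one step earlier, to \m{\U} itself, obtaining algebraic \m{\U,\U^+}-injectivity of \m{\U} and hence of \m{D \to \U}, before transferring along the unspecified retraction. Both orderings are valid and use the same ingredients.
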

\noindent We would like to do better than this. For that purpose, we consider
the partial-map classifier in conjunction with flabbiness and resizing.

\section{Algebraic flabbiness via the partial-map classifier}

We begin with a generalization~\cite{MR3695545} of a familiar construction
in \m{1}-topos theory~\cite{MR1173017}.
\begin{definition}
The lifting \m{\Lift_{\T} \, X : \T^+ \sqcup \U} of a type \m{X:\U}
with respect to a universe \m{\T} 
is defined by
\M{ \Lift_{\T}\, X \eqdef \Sigma (P : \T), (P \to X) \times
  \text{\m{P} is a subsingleton}.  }
\end{definition}

When the universes \m{\T} and \m{\U} are the same and the last
component of the triple is omitted, we have the familiar
canonical correspondence
\M{
  (X \to \T) \simeq (\Sigma (P : \T), P \to X)
}
that maps \m{A : X \to \T} to \m{P \eqdef \Sigma \, A} and the
projection \m{\Sigma \, A \to X}.  If the universe~\m{\U} is not
necessarily the same as \m{\T}, then the equivalence becomes
\M{
  (\Sigma (A : X \to \T \sqcup \U), \Sigma(T : \T), T \simeq \Sigma \, A) \simeq (\Sigma (P : \T), P \to X).
}
This says that although the total space \m{\Sigma \, A} doesn't
live in the universe \m{\T}, it must have a copy in \m{\T}.

What the third component of the triple does is to restrict the above
equivalences to the subtype of those \m{A} whose total spaces \m{\Sigma
  \, A} are subsingletons. If we define the type of partial maps by
\M{(X \partialto Y) \eqdef \Sigma (A : \T), (A \emb X) \times (A \to Y),
}
where \m{A \emb X} is the type of embeddings, then for any \m{X,Y :
  \T}, we have an equivalence
\M{
  (X \partialto Y)
  \simeq (X \to \Lift_{\T} \, Y),
}
so that \m{\Lift_{\T}} is the partial-map classifier for the universe
\m{\T}.  When the universe~\m{\U} is not necessarily the same
as~\m{\T}, the lifting classifies partial maps in~\m{\U} whose
embeddings have fibers with copies in~\m{\T}.

This is a sort of an \m{\infty}-monad ``across
universes''~\cite{TypeTopology}, and modulo providing coherence data,
which we haven't done at the time of writing, but which is not needed
for our purposes. We could call this a ``wild monad'', but we will
refer to it as simply a monad with this warning.

In order to discuss the lifting in more detail, we first characterize
its equality types. We denote the projections from \m{\Lift_{\T}
  \, X} by
\M{
  \begin{array}{llll}
    \delta (P , \phi , i) & \eqdef & P & \text{(domain of definition),} \\
    \upsilon (P , \phi , i) & \eqdef & \phi & \text{(value function),} \\
    \sigma(P , \phi , i) & \eqdef & i & \text{(subsingleton-hood of the domain of definition).}
  \end{array}
}
For \m{l , m : \Lift_{\T} \, X}, define
\M{
  (l \backsimeq m) \eqdef \Sigma (e : \delta \, l \simeq \delta \, m), \upsilon \, l = \upsilon \, m \comp e,
}
as indicated in the commuting triangle
\M{\begin{diagram}[p=0.4em]
  \delta l & & \rTo^e & & \delta m \\
  & \rdTo_{v l} & & \ldTo_{v m} & \\
   & & X & &
\end{diagram}}

\begin{lemma}
  The canonical transformation \m{(l = m) \to (l \backsimeq m)} that
  sends \m{\refl_l} to the identity equivalence paired with \m{\refl_{\upsilon \,
      l}} is an equivalence.
\end{lemma}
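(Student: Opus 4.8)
The plan is to characterize the identity type by peeling off the $\Sigma$-type that defines $\Lift_\T X$ and applying univalence to its first component. Write $l = (P,\phi,i)$ and $m = (Q,\psi,j)$, so that $\delta\,l = P$, $\upsilon\,l = \phi$, and similarly for $m$. By the standard description of identifications in a $\Sigma$-type, $l = m$ is equivalent to the type of pairs $(p,q)$ with $p : P = Q$ and $q$ an identification, in the fiber over $Q$, between the transport of $(\phi,i)$ along $p$ and $(\psi,j)$. Since transport into a product is computed componentwise, and since ``$Q$ is a subsingleton'' is itself a proposition (so identifications between its elements are unique), the component of $q$ concerning $i$ and $j$ carries no information, and $l = m$ is equivalent to $\Sigma (p : P = Q),\ \transport\,p\,\phi = \psi$, the transport now being in the fibration $A \mapsto (A \to X)$.

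Next I would rewrite the two remaining components. For the first, univalence replaces $p : P = Q$ by an equivalence $e : P \simeq Q$. For the second, I would invoke the standard computation of transport in a function-type fibration: transporting $\phi : P \to X$ along the identification corresponding to $e$ produces $\phi \comp e^{-1}$, so that $\transport\,p\,\phi = \psi$ becomes $\phi \comp e^{-1} = \psi$, which, precomposing with $e$, is equivalent to $\phi = \psi \comp e$, i.e.\ to $\upsilon\,l = \upsilon\,m \comp e$. Assembling these, and using function extensionality where convenient, yields an equivalence
\M{
  (l = m) \;\simeq\; \Sigma (e : \delta\,l \simeq \delta\,m),\ \upsilon\,l = \upsilon\,m \comp e \;=\; (l \backsimeq m).
}

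It then remains to check that this composite equivalence coincides with the canonical map of the statement, i.e.\ that it sends $\refl_l$ to the identity equivalence paired with $\refl_{\upsilon\,l}$. This follows by tracing $\refl_l$ through the chain: it maps to $(\refl_P,\refl)$ under the $\Sigma$-unfolding, $\refl_P$ corresponds to the identity equivalence under univalence, and the second component remains reflexivity throughout. A cleaner variant that avoids this last verification is to prove instead that the total space $\Sigma (m : \Lift_\T X),\ l \backsimeq m$ is contractible, with center the triple consisting of $l$, the identity equivalence on $\delta\,l$, and $\refl$; then the fundamental theorem of identity types (a $\refl$-preserving fiberwise map out of $\Id_l$ is a fiberwise equivalence precisely when its total space is contractible) gives the result directly for the canonical map.

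I expect the only real difficulty to be bookkeeping rather than conceptual: pinning down the exact form of the transport-in-a-function-type computation (whether it comes out as $\phi \comp e^{-1}$, as $\psi \comp e$, or as some inverse- or variance-swapped variant) and making sure the equivalence one ends up with is literally the canonical one and not merely pointwise equal to it. Should the direct computation become fiddly, the contractibility route side-steps the variance issue entirely and is the safer bet; conversely, if an explicit inverse is preferred, one can write the map $(l \backsimeq m) \to (l = m)$ by univalence on the equivalence component followed by function extensionality on the identification component, and then verify the two round-trips.
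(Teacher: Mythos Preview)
Your proposal is correct and follows the standard route for characterising identity types of iterated \m{\Sigma}-types: unfold the \m{\Sigma}-identity, discard the propositional component (being-a-subsingleton is itself a proposition by function extensionality), apply univalence to the first component, and compute transport in the contravariant function-type family. Your closing remark about the fundamental theorem of identity types is the cleaner way to finish, since it spares you the variance bookkeeping you rightly flag as the only fiddly point.

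The paper itself gives no proof of this lemma: it is stated and immediately used, with the details deferred to the accompanying Agda formalisation. So there is nothing to compare against in the text; your argument is exactly the kind of proof one would expect the formal development to contain, and either of your two proposed endings (chasing \m{\refl} through the chain, or contractibility of the total space) would serve.
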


The unit \m{\eta : X
  \to \Lift_\T X} is given by \M{\eta_X \, x = (\One, (p \mapsto x), i)}
where \m{i} is a proof that \m{\One} is a proposition.
\begin{lemma}
  The unit \m{\eta_X : X\to\Lift_\T X} is an embedding.
\end{lemma}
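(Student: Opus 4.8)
The plan is to show that every fiber of $\eta_X$ is a proposition, by reducing it — via the characterization of the identity types of $\Lift_\T X$ just established — to the type $\One \simeq P$ for the proposition $P$ involved. So fix $l : \Lift_\T X$ and write $l = (P,\varphi,i)$, where $i$ witnesses that $P$ is a proposition. I want to show that the fiber $\Sigma(x:X),\ \eta_X\,x = l$ is a proposition.

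First I would unfold the equation. Since $\delta(\eta_X\,x) = \One$, $\upsilon(\eta_X\,x) = (p \mapsto x)$, $\delta\,l = P$ and $\upsilon\,l = \varphi$, the preceding lemma gives that $\eta_X\,x = l$ is equivalent to $\eta_X\,x \backsimeq l$, i.e.\ to $\Sigma(e : \One \simeq P),\ (p \mapsto x) = \varphi \comp e$. Hence the fiber over $l$ is equivalent to $\Sigma(x:X)\,\Sigma(e : \One \simeq P),\ (p \mapsto x) = \varphi \comp e$, and, swapping the two $\Sigma$s, to $\Sigma(e : \One \simeq P)\,\Sigma(x:X),\ (p \mapsto x) = \varphi \comp e$.

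Next I would collapse the inner $\Sigma$. The map $X \to (\One \to X)$ sending $x$ to the constant function $p \mapsto x$ is an equivalence, with inverse $g \mapsto g\,\one$, so for each fixed $e$ the type $\Sigma(x:X),\ (p \mapsto x) = \varphi \comp e$ is the fiber of an equivalence over $\varphi \comp e$, hence a singleton. Therefore the fiber of $\eta_X$ over $l$ is equivalent to $\One \simeq P$. Finally, since $P$ is a proposition, $\One \to P$ is a proposition (by function extensionality, or because it is logically equivalent to $P$), and being an equivalence is always a proposition, so $\One \simeq P$ is a sum of a proposition with a proposition-valued predicate and is therefore a proposition. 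Since $l$ was arbitrary, this exhibits all fibers of $\eta_X$ as propositions, which is what it means for $\eta_X$ to be an embedding.

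The whole argument is routine bookkeeping; the one point that needs a little care is recognizing the inner $\Sigma$ as the fiber of the equivalence $X \simeq (\One \to X)$ so that it can be discarded. If one prefers, this step can be avoided altogether by using that a type is a proposition as soon as it is pointed: a point of the fiber supplies an equivalence $\One \simeq P$, whence $P$ is contractible, and the fiber is then immediately seen to be contractible. (A more structural alternative is to factor $\eta_X$ as the equivalence $X \simeq (\One \to X)$ followed by a fiber inclusion of the first projection $\Lift_\T X \to \Omega_\T$, which is an embedding because $\Omega_\T$ is a set; but the fiber computation above is more self-contained given the lemma just proved.)
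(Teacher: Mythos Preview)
Your proof is correct and follows exactly the route the paper gestures at: it uses the characterization of identity in \m{\Lift_\T X} (the lemma immediately preceding this one) to reduce the fiber over \m{(P,\varphi,i)} to \m{\One \simeq P}, which is a proposition because \m{P} is. The paper's proof is just the one-line ``easily proved using the above characterization of equality''; you have simply unpacked that into the explicit fiber computation.
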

\begin{proof}
  This is easily proved using the above characterization of equality.
\end{proof}
\begin{lemma}
  The unit satisfies the unit equations for a monad.
\end{lemma}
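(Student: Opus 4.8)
The plan is to verify the two monad unit equations for the lifting monad $\Lift_\T$ directly, using the explicit description of $\eta$ and the characterization of equality in $\Lift_\T X$ from the preceding lemma. Recall that a monad has a multiplication $\mu : \Lift_\T \Lift_\T X \to \Lift_\T X$; the unit laws state that $\mu \comp \eta_{\Lift_\T X} \sim \id$ and $\mu \comp \Lift_\T(\eta_X) \sim \id$ (the left and right unit equations), where $\Lift_\T$ acts functorially on $\eta_X$. Since multiplication is defined by flattening a nested partial element — concretely, $\mu(P,\phi,i)$ has domain of definition $\Sigma(p:P),\,\delta(\phi\,p)$ with the evident value function and subsingleton-hood proof — both composites are, after unfolding, maps $\Lift_\T X \to \Lift_\T X$ whose effect on a triple $(Q,\psi,j)$ produces a triple whose domain of definition is $Q \times \One$ (or $\One \times Q$, depending on which side), which is canonically equivalent to $Q$.

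The key steps, in order, are: first, to write out $\mu$ explicitly in terms of the projections $\delta,\upsilon,\sigma$, checking that $\delta(\mu\,l) \eqdef \Sigma(p:\delta\,l),\,\delta(\upsilon\,l\,p)$ and that the value function composes the two value functions appropriately; second, to compute $\mu(\eta_{\Lift_\T X}\,l)$ for $l : \Lift_\T X$ — since $\eta$ inserts $l$ over the domain $\One$, the resulting domain of definition is $\Sigma(u:\One),\,\delta\,l \simeq \delta\,l$, and the value function and subsingleton proof transport along this equivalence back to those of $l$; third, to compute $\mu(\Lift_\T(\eta_X)\,l)$ — here $\Lift_\T(\eta_X)$ postcomposes the value function of $l$ with $\eta_X$, so each fiber value $\upsilon\,l\,p$ becomes $\One$, and the flattened domain is $\Sigma(p:\delta\,l),\,\One \simeq \delta\,l$, again recovering $l$. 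In each case I then invoke the equality characterization lemma: to exhibit an identification $\mu(\cdots\,l) = l$ it suffices to give an equivalence $e$ of domains of definition together with a proof that the value functions agree up to $e$, and the subsingleton-hood components are automatically identified since they live in a proposition. The required equivalences are the standard $\One \times Q \simeq Q$ and $Q \times \One \simeq Q$, and commutativity of the relevant triangle is immediate by construction.

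Finally, to produce genuine pointwise identifications of functions $\Lift_\T X \to \Lift_\T X$ (rather than merely a fiberwise equivalence), I apply the equivalence $(l = m) \to (l \backsimeq m)$ from the previous lemma in the backward direction, turning each $\backsimeq$-datum into an honest path, and appeal to function extensionality if an equality of the composites as functions is wanted rather than a homotopy. The main obstacle — and it is mild — is bookkeeping the transports: after applying the equality characterization one must check that the value function of the flattened element, precomposed with the chosen domain-equivalence, really is definitionally (or at least propositionally) the value function of $l$, which involves unfolding the functorial action $\Lift_\T(\eta_X)$ and the definition of $\mu$ simultaneously and tracking how $\upsilon$ behaves under composition. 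None of this is conceptually hard, but it is the step where one must be careful, so I would simply note that it is a routine verification using the characterization of equality and refer to the formal Agda development for the explicit transport manipulations.
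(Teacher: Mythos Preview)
Your proposal is correct and follows essentially the same route as the paper: invoke the characterization of equality in \m{\Lift_\T X} to reduce each unit law to an equivalence of domains of definition, which in both cases is the fact that \m{\One} is a (left or right) unit for the binary product of types. The paper compresses all of your unfolding and bookkeeping into the single sentence ``the left and right unit laws amount to the fact that the type \m{\One} is the left and right unit for the operation \m{(-)\times(-)} on types,'' but the substance is the same; note only that in the paper's ordering the multiplication \m{\mu} and the functorial action are written down \emph{after} this lemma, so you are anticipating those definitions rather than recalling them.
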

\begin{proof}
  Using the above characterization of equality, the left and right
  unit laws amount to the fact that the type \m{\One} is the left
  and right unit for the operation \m{(-)\times(-)} on types.
\end{proof}
\noindent Next, \m{\Lift_\T} is functorial by mapping a function
\m{f : X \to Y} to the function \m{\Lift_\T f : \Lift_\T X \to \Lift_\T Y}
defined by
\M{
\Lift_\T f (P , \phi , i) = (P , f \comp \phi , i).
}
This commutes with identities and composition definitionally.
%
%
We define the multiplication \m{\mu_X : \Lift_{\T} (\Lift_{\T}\, X) \to \Lift_{\T}\, X} by
\M{
  \begin{array}{lll}
  \delta (\mu (P , \phi , i)) & \eqdef & \Sigma (p : P), \delta (\phi \, p), \\
  \upsilon (\mu (P , \phi , i)) & \eqdef & (p , q) \mapsto \upsilon (\phi \, p) \, q , \\
  \sigma (\mu (P , \phi , i)) & \eqdef & \text{because subsingletons are closed under sums.} \\
  \end{array}
}
\begin{lemma}
  The multiplication satisfies the associativity equation for a monad.
\end{lemma}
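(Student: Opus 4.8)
The plan is to reduce the equation $\mu_X \comp \Lift_\T \mu_X = \mu_X \comp \mu_{\Lift_\T X}$, as maps $\Lift_\T(\Lift_\T(\Lift_\T X)) \to \Lift_\T X$, to the associativity of $\Sigma$, using function extensionality and the characterization of the identity types of the lifting proved above: to identify two elements $l, m : \Lift_\T X$ it suffices to give an equivalence $e : \delta\, l \simeq \delta\, m$ of domains of definition together with a proof that $\upsilon\, l = \upsilon\, m \comp e$, the subsingleton-hood components requiring no attention. So I would fix $(P, \phi, i) : \Lift_\T(\Lift_\T(\Lift_\T X))$ and write $\phi\, p = (Q_p, \psi_p, j_p)$ and $\psi_p\, q = (R_{p,q}, \chi_{p,q}, k_{p,q})$.

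First I would unfold the two composites at $(P,\phi,i)$ using the definitions of $\Lift_\T$ on functions and of $\mu$. On the left, $\Lift_\T\mu_X(P,\phi,i) = (P, \mu_X \comp \phi, i)$, and then $\mu_X$ of this has domain of definition $\Sigma(p : P), \Sigma(q : Q_p), R_{p,q}$ with value function $(p, q, r) \mapsto \chi_{p,q}\, r$. On the right, $\mu_{\Lift_\T X}(P,\phi,i)$ has domain $\Sigma(p : P), Q_p$ and value function $(p,q) \mapsto \psi_p\, q$, and then $\mu_X$ of this has domain $\Sigma(t : \Sigma(p : P), Q_p), R_{\fst t, \snd t}$ with value function $(t, r) \mapsto \chi_{\fst t, \snd t}\, r$, where the $\eta$-rule for $\Sigma$ makes the reindexing along $\fst, \snd$ harmless.

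The required equivalence $e$ of domains of definition is then exactly the canonical $\Sigma$-associativity equivalence $(\Sigma(t : \Sigma B), C\, t) \simeq (\Sigma(a : A), \Sigma(b : B\, a), C(a,b))$ used in the proof of Remark~\ref{iterated}, instantiated at $A \eqdef P$, $B \eqdef (p \mapsto Q_p)$ and $C \eqdef (t \mapsto R_{\fst t, \snd t})$; and under this $e$ the two value functions agree definitionally, both being $(p, q, r) \mapsto \chi_{p,q}\, r$ once we unfold with the $\Sigma$-$\eta$ rule, so the required identification $\upsilon\, l = \upsilon\, m \comp e$ is simply $\refl$. Passing the datum $(e, \refl)$ through the equality characterization yields the pointwise identification, and a final application of function extensionality gives the equation. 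I do not expect a genuine obstacle here: exactly as the unit laws for $\mu$ amount to $\One$ being a unit for $(-) \times (-)$, the whole content of associativity is the associativity of $\Sigma$, and the only real work is the bookkeeping of the three nested triples and confirming that the value functions line up under the reassociation, which they do because the value function produced by $\mu$ is itself defined by the same $\Sigma$-reassociation.
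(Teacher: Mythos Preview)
Your proposal is correct and follows exactly the same approach as the paper: reduce the associativity equation to the associativity of \m{\Sigma} via the characterization of the identity types of the lifting. The paper's own proof is a single sentence to this effect, and your version simply unpacks the bookkeeping (the nested triples, the two composites, the definitional agreement of the value functions under the reassociation) that the paper leaves implicit.
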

\begin{proof}
  Using the above characterization of equality, we see that this
  amounts to the associativity of \m{\Sigma}, which says that for
  \m{P:\T}, \m{Q: X \to \T}, \m{R : \Sigma \, Q \to \T} we have
   \m{(\Sigma (t : \Sigma \, Q), R \, t) \simeq (\Sigma (p : P)\,
    \Sigma (q : Q \, p), R(p,q))}.
\end{proof}

\noindent The naturality conditions for the unit and multiplication are
even easier to check, and we omit the verification. We now turn to
algebras. We omit the direct verification of the following.

\begin{lemma} Let \m{X:\U} be any type.
  \begin{enumerate}
  \item A function \m{\alpha : \Lift_\T X \to X}, that is, a functor
    algebra, amounts to a family of functions \m{\bigsqcup_P : (P \to
      X) \to X} with \m{P : \T} ranging over subsingletons.

    \medskip We will write \m{\bigsqcup_P \phi} as \m{\bigsqcup_{p : P} \, \phi \, p}.
  \item The unit law for monad algebras amounts to, for any \m{x:X},
    \M{
       \bigsqcup_{p : \One} x = x,
    }
    which is equivalent to, for all subsingletons \m{P}, functions \m{\phi : P \to X} and points \m{p_0 : P},
    \M{
       \bigsqcup_{p : P} \phi \, p = \phi \, p_0.
    }

    \medskip Therefore a functor algebra satisfying the unit law
    amounts to the same thing as algebraic flabbiness data. In other
    words, the algebraically \m{\T}-flabby types are the algebras of
    the pointed functor \m{(\Lift_\T,\eta)}. In particular,
    monad algebras are algebraically flabby.
  \item The associativity law for monad algebras amounts to, for any subsingleton \m{P :
      \T} and family \m{Q : P \to \T} of subsingletons, and any \m{\phi : \Sigma \, Q \to X},
    \M{
      \bigsqcup_{t : \Sigma Q} \phi \, t = \bigsqcup_{p : P} \bigsqcup_{q : Q \, p} \phi (p ,q).
    }
  \end{enumerate}
\end{lemma}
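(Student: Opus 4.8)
The plan is to unwind each of the three claims through the characterization of equality in $\Lift_\T X$ given above, reducing every monad-algebra axiom to an elementary fact about the type-former $\Sigma$ (and the unit type $\One$).

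\textbf{Claim (1).} First I would observe that by the correspondence $(P \to X) \to X$ being exactly the data of $\alpha$ restricted to the fiber over $P$, a functor algebra $\alpha : \Lift_\T X \to X$ is determined by its behaviour on triples $(P,\phi,i)$, and since the subsingleton-hood component $i$ lives in a proposition it carries no information; so $\alpha$ amounts to a function $\Pi(P:\T), \text{$P$ is a subsingleton} \to (P \to X) \to X$, i.e.\ a family $\bigsqcup_P : (P \to X) \to X$ indexed by subsingletons $P:\T$. This is just $\Sigma$–$\Pi$ distributivity together with the fact that maps out of a proposition-indexed type are trivial. I would set up the notation $\bigsqcup_{p:P}\phi\,p$ here.

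\textbf{Claim (2).} The unit law $\alpha \comp \eta_X = \id$ says, evaluating at $x:X$, that $\alpha(\eta_X x) = x$; since $\eta_X x = (\One, (p \mapsto x), i)$, this is precisely $\bigsqcup_{p:\One} x = x$. For the stated equivalence with the general form $\bigsqcup_{p:P}\phi\,p = \phi\,p_0$ for all subsingletons $P$, maps $\phi : P \to X$, and points $p_0 : P$: the backward direction is immediate by taking $P \eqdef \One$, $p_0 \eqdef \star$; for the forward direction, given $p_0 : P$ the map $\phi$ factors through the equivalence $\One \simeq P$ sending $\star$ to $p_0$ (a proposition with a point is contractible), and transporting the unit law along this equivalence — using the characterization-of-equality lemma for $\Lift_\T$ to see that $\eta_X x$ and $(P, (p \mapsto \phi\,p_0), i)$ are identified when $P$ is pointed — yields $\bigsqcup_{p:P}\phi\,p = \phi\,p_0$. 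I would then note that the latter is verbatim the algebraic $\T$-flabbiness data of $X$ (the witness $d$ being $\bigsqcup_{p:P}\phi\,p$), so functor algebras with the unit law are exactly algebraically $\T$-flabby types, i.e.\ the algebras of the pointed functor $(\Lift_\T, \eta)$; monad algebras, satisfying in particular the unit law, are thus algebraically flabby.

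\textbf{Claim (3).} The associativity law $\alpha \comp \mu_X = \alpha \comp \Lift_\T \alpha$ is to be evaluated at an element of $\Lift_\T(\Lift_\T X)$ of the form $(P, \phi, i)$ with $\phi : P \to \Lift_\T X$; writing $\phi\,p = (Q\,p, \psi_p, -)$, the left-hand side applies $\alpha$ to $\mu(P,\phi,i)$, whose domain is $\Sigma(p:P), Q\,p$ and whose value function is $(p,q)\mapsto \psi_p\,q$, giving $\bigsqcup_{t:\Sigma Q}\psi\,t$; the right-hand side applies $\alpha$ to $(P, (p \mapsto \alpha(\phi\,p)), i) = (P, (p \mapsto \bigsqcup_{q:Q\,p}\psi_p\,q), i)$, giving $\bigsqcup_{p:P}\bigsqcup_{q:Q\,p}\psi_p\,q$. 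So the law is $\bigsqcup_{t:\Sigma Q}\phi\,t = \bigsqcup_{p:P}\bigsqcup_{q:Q\,p}\phi(p,q)$, as stated. One should note that $\mu$ itself is only well-defined because subsingletons are closed under $\Sigma$, which is implicitly the point being recorded.

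The main obstacle — to the extent there is one in what is declared a "direct verification" — is the forward direction of the unit-law equivalence in Claim (2): it is the only place where one genuinely invokes the equality characterization of $\Lift_\T$ and the contractibility of a pointed proposition, rather than merely expanding definitions. Everything else is bookkeeping: tracking the three components of the lifting triples through $\eta$, $\mu$, and $\Lift_\T(-)$, discarding the propositional $\sigma$-components as they live in subsingletons, and matching the result against $\Sigma$-associativity and the unit laws for $(-)\times(-)$. Accordingly I would present the proof tersely, spelling out the component computations for $\mu$ and $\Lift_\T\alpha$ in Claim (3) and the pointed-proposition argument in Claim (2), and leaving the remaining identifications to the reader.
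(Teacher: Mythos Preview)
Your proposal is correct and is precisely the direct verification that the paper explicitly omits (``We omit the direct verification of the following''). Your unwinding of the definitions of $\eta$, $\mu$, and $\Lift_\T\alpha$ in each part matches what one would do, and your use of the equality characterization of $\Lift_\T X$ together with the contractibility of a pointed subsingleton for the forward direction of the unit-law equivalence in Claim~(2) is the expected argument.
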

\noindent So the associativity law for algebras plays no role in
flabbiness. But of course we can have algebraic flabbiness data that
is associative, such as not only the free algebra \m{\Lift_\T X}, but
also the following two examples that connect to the opening
development of this paper on the injectivity of universes, in
particular the construction~\ref{iterated}:
\begin{lemma} The universe \m{\T} is a monad algebra of
  \m{\Lift_\T} in at least two ways, with \m{\bigsqcup = \Sigma} and
  \m{\bigsqcup = \Pi}.
\end{lemma}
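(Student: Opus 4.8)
The plan is to reduce everything to the characterization of $\Lift_\T$-algebras established in the preceding lemma. By that lemma, to equip $\T$ with $\Lift_\T$-algebra structure it suffices to give, for each subsingleton $P : \T$, an operation $\bigsqcup_P : (P \to \T) \to \T$, and to be a \emph{monad} algebra we must in addition verify the unit equation and the associativity equation in the forms recorded there. I would take $\bigsqcup_P \phi \eqdef \Sigma \phi$ for the first algebra and $\bigsqcup_P \phi \eqdef \Pi \phi$ for the second; these are well-typed elements of $\T$ since $P : \T$ and each $\phi \, p : \T$, using the definitional idempotency $\T \sqcup \T = \T$. Note that the assumption that $P$ be a subsingleton is not used in the verification, as $\Sigma$ and $\Pi$ are defined for arbitrary families; we are simply restricting operations that are available unconditionally.

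For the unit law I must check $\bigsqcup_{p : \One} x = x$ for every $x : \T$. With $\bigsqcup = \Sigma$ this is the statement that $\Sigma (p : \One), x \simeq x$, i.e.\ that $\One$ is a left unit for $(-) \times (-)$; with $\bigsqcup = \Pi$ it is $\Pi (p : \One), x \simeq x$, i.e.\ $\One \to x \simeq x$. In both cases univalence turns the equivalence into the required identification. For the associativity law I must check, given a subsingleton $P : \T$, a family $Q : P \to \T$ of subsingletons and $\phi : \Sigma \, Q \to \T$, that $\bigsqcup_{t : \Sigma Q} \phi \, t = \bigsqcup_{p : P} \bigsqcup_{q : Q \, p} \phi(p,q)$. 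But for $\bigsqcup = \Sigma$ this is exactly the canonical equivalence $(\Sigma (t : \Sigma Q), \phi \, t) \simeq (\Sigma (p : P)\, \Sigma (q : Q \, p), \phi(p,q))$ and for $\bigsqcup = \Pi$ it is $(\Pi (t : \Sigma Q), \phi \, t) \simeq (\Pi (p : P)\, \Pi (q : Q \, p), \phi(p,q))$ --- precisely the two equivalences already used in Remark~\ref{iterated} and in the proof that the multiplication $\mu$ is associative --- again promoted to an identification by univalence.

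I do not expect a genuine obstacle here: the argument is essentially bookkeeping. The only points requiring a little care are that it is univalence (together with function extensionality in the $\Pi$ case) that converts the standard type-theoretic equivalences into the literal equalities demanded by the algebra equations, and that no coherence beyond these two equations is needed, since $\Lift_\T$ is being treated throughout as a wild monad. The conceptual content is just that the unit and associativity of $\T$ as a $\Lift_\T$-algebra \emph{are} the unit and associativity of $\Sigma$ (respectively $\Pi$), as witnessed by the equivalences of Remark~\ref{iterated}.
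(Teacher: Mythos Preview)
Your proof is correct and matches the paper's intended approach: the paper states this lemma without proof, but the preceding sentence explicitly points to the construction in Remark~\ref{iterated} as the connection, and your argument unpacks precisely this --- the unit and associativity laws for \m{\bigsqcup = \Sigma} and \m{\bigsqcup = \Pi} reduce, via univalence, to the standard equivalences \m{\One \times X \simeq X}, \m{\One \to X \simeq X}, and the \m{\Sigma}/\m{\Pi} associativity equivalences used there and in the verification of the monad laws for \m{\Lift_\T}.
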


We now apply these ideas to injectivity.
\begin{lemma}
  Any algebraically \m{\T,\T^+}-injective type \m{D:\T} is a retract of \m{\Lift_\T D}.
\end{lemma}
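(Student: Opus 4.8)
The plan is to extend the identity map of $D$ along the unit of the lifting monad. Recall that $\Lift_\T D : \T^+ \sqcup \T$, and since $D : \T$ this is $\T^+ \sqcup \T = \T^+$, so $\Lift_\T D$ sits in exactly the second universe slot of the hypothesis of algebraic $\T,\T^+$-injectivity. Recall also that the unit $\eta_D : D \to \Lift_\T D$ has been shown above to be an embedding.

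Now I would apply the algebraic $\T,\T^+$-injectivity of $D$ to the embedding $\eta_D : D \to \Lift_\T D$ and to the function $\id_D : D \to D$: this produces a map $r \eqdef \id_D \mid \eta_D : \Lift_\T D \to D$ together with a pointwise identification $r \comp \eta_D \sim \id_D$. Hence $(\eta_D, r)$ is a section-retraction pair exhibiting $D$ as a retract of $\Lift_\T D$. This is literally the construction behind Lemma~\ref{ref:16:3} (and the preceding lemma that an algebraically $\U,\V$-injective type is a retract of any $Y:\V$ into which it embeds), with $\Lift_\T D$ now playing the role that $D \to \U$ played there.

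There is no real obstacle here; the only thing to keep straight is the universe bookkeeping, namely that $D$ occupies the first universe slot and $\Lift_\T D$ the second, so that the injectivity hypothesis applies verbatim and, in particular, no resizing axiom is needed. If one prefers not to name the earlier retract lemma, the same two-line argument — extend $\id_D$ along the embedding $\eta_D$ — can simply be written out directly.
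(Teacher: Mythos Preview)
Your proof is correct and is exactly the paper's argument: extend the identity of \m{D} along the embedding \m{\eta_D}, which is possible because \m{D:\T} and \m{\Lift_\T D:\T^+} match the universe parameters of the algebraic \m{\T,\T^+}-injectivity hypothesis. The paper's one-line proof omits the universe bookkeeping you spelled out, but the content is identical.
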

\begin{proof}
  Because the unit is an embedding, and so we can extend the identity of~\m{D} along it.
\end{proof}

\begin{theorem} \df{(Second characterization of algebraic injectives.)}
  With propositional resizing, a type \m{D:\T} is algebraically
  \m{\T,\T}-injective if and only if it is a retract of a monad
  algebra of \m{\Lift_\T}.
\end{theorem}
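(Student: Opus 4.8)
The plan is to prove the two implications separately, in each case reducing to results already established. For the forward direction $(\Rightarrow)$, suppose $D : \T$ is algebraically $\T,\T$-injective. By Lemma~\ref{universe:independence} (which needs propositional resizing), $D$ is also algebraically $\T,\T^+$-injective. By the immediately preceding lemma, any algebraically $\T,\T^+$-injective type $D:\T$ is a retract of $\Lift_\T D$, because the unit $\eta_D : D \to \Lift_\T D$ is an embedding and we can extend $\id_D$ along it. Now $\Lift_\T D$ is the free monad algebra of $\Lift_\T$ on $D$, so $D$ is a retract of a monad algebra, as required. The only subtlety here is to notice that the free algebra $\mu_D : \Lift_\T(\Lift_\T D) \to \Lift_\T D$ is indeed a monad algebra — but this is exactly what the associativity and unit lemmas for $\mu$ established above give us.

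For the reverse direction $(\Leftarrow)$, suppose $D : \T$ is a retract of a monad algebra $A$ of $\Lift_\T$. Since algebraic injectivity is closed under retracts (Lemma with $(s,r)$), it suffices to show that any monad algebra $A$ of $\Lift_\T$ is algebraically $\T,\T$-injective. By the algebra lemma, part~(2), a monad algebra is in particular algebraically $\T$-flabby (the unit law for the algebra is precisely algebraic flabbiness data, since associativity plays no role in flabbiness). Then I would invoke the chain of flabbiness-to-injectivity conversions: by Lemma~\ref{for27:1} and Lemma~\ref{for27:2} together with the resizing result that algebraic flabbiness is universe-independent under propositional resizing (the lemma just before Lemma~\ref{universe:independence}), algebraic $\T$-flabbiness upgrades to algebraic $(\T \sqcup \T)$-flabbiness $= \T$-flabbiness — trivially here — and hence by Lemma~\ref{for27:2} to algebraic $\T,\T$-injectivity. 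Actually for this direction no genuine resizing is needed: algebraic $\T$-flabbiness gives algebraic $\T,\T$-injectivity directly by Lemma~\ref{for27:2}, since $\T \sqcup \T = \T$. So resizing is only used in the forward direction.

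Assembling: in the forward direction we use resizing to pass from $\T,\T$- to $\T,\T^+$-injectivity and then the retract-of-$\Lift_\T D$ lemma; in the backward direction we use that monad algebras are algebraically flabby, that flabbiness yields $\T,\T$-injectivity, and that injectivity is closed under retracts. I expect the main (and only real) obstacle to be bookkeeping the universe levels correctly so that the hypotheses of Lemma~\ref{universe:independence}, Lemma~\ref{for27:1}, and Lemma~\ref{for27:2} line up — in particular checking that the resizing step in the $(\Rightarrow)$ direction is legitimately available and that $\Lift_\T D$, which lives in $\T^+$, is the right codomain for the embedding $\eta_D$ so that the extension of $\id_D$ along it produces a retraction landing back in $D : \T$. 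Everything else is a direct appeal to prior results.
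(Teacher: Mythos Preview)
Your proposal is correct and follows essentially the same approach as the paper. The paper's proof is extremely terse in the backward direction (it says only ``algebraic injectivity is closed under retracts''), implicitly relying on the already-established fact that monad algebras are algebraically \m{\T}-flabby and hence algebraically \m{\T,\T}-injective; you have simply made this step explicit, and you are right that resizing is used only in the forward direction.
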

\begin{proof}
  \m{(\Rightarrow)}: Because \m{D} is algebraically
  \m{\T,\T}-injective, it is algebraically \m{\T,\T^+}-injective by
  resizing, and hence it is a retract of \m{\Lift_\T D}.
  \m{(\Leftarrow)}: Algebraic injectivity is closed under retracts.
\end{proof}

\begin{definition} \label{omega:resizing}
Now, instead of propositional resizing, we consider the propositional
impredicativity of the universe \m{\U}, which says that the type
\m{\Omega_\U} of propositions in \m{\U}, which lives in the next
universe \m{\U^+}, has an equivalent copy in \m{\U}. We refer to this
kind of impredicativity as \m{\Omega}-resizing.
\end{definition}
It is not hard to see
that propositional resizing implies \m{\Omega}-resizing for all
universes other than the first one~\cite{TypeTopology}, and so all
the assumption of \m{\Omega}-resizing does is to account
for the first universe too.
\begin{lemma}
  Under \m{\Omega}-resizing, for any type \m{X:\T}, the type
  \m{\Lift_{\T} X : \T^+} has an equivalent copy in the universe \m{\T}.
\end{lemma}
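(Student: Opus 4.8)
The plan is to rearrange $\Lift_\T X$ into a $\Sigma$-type indexed by $\Omega_\T$ and then replace $\Omega_\T$ by the small copy provided by $\Omega$-resizing. First I would establish a canonical equivalence
\M{\Lift_\T X \simeq \Sigma (p : \Omega_\T), (\fst p \to X),}
where $\Omega_\T \eqdef \Sigma (P : \T), \text{$P$ is a subsingleton}$ and $\fst p : \T$ denotes the underlying type of the proposition $p : \Omega_\T$. This is pure bookkeeping with $\Sigma$ and $\times$: starting from $\Lift_\T X \eqdef \Sigma (P : \T), (P \to X) \times (\text{$P$ is a subsingleton})$, commute the two factors of the product and reassociate the resulting nested $\Sigma$, so that the pair consisting of $P$ together with a proof that it is a subsingleton becomes a single variable $p$ ranging over $\Omega_\T$; the fibre $(P \to X) = (\fst p \to X)$ does not depend on the subsingleton-hood proof, so nothing is lost. (The $\Sigma$-associativity equivalence used here is the one recalled in the proof of Remark~\ref{iterated}.)

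Next I would use $\Omega$-resizing to obtain a type $\Omega'_\T : \T$ together with an equivalence $\rho : \Omega'_\T \simeq \Omega_\T$. Reindexing the $\Sigma$-type above along this equivalence of base types --- a routine fact about $\Sigma$-types, namely that $e : A \simeq B$ induces $\Sigma_A (C \comp e) \simeq \Sigma_B C$ --- gives
\M{\Sigma (p : \Omega_\T), (\fst p \to X) \simeq \Sigma (q : \Omega'_\T), (\fst (\rho \, q) \to X),}
and composing with the previous equivalence exhibits $\Lift_\T X$ as equivalent to the type $L' \eqdef \Sigma (q : \Omega'_\T), (\fst (\rho \, q) \to X)$.

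It then remains only to check the universe level of $L'$, which is the single place where a little care is needed. We have $\Omega'_\T : \T$ by $\Omega$-resizing, and for each $q : \Omega'_\T$ the element $\rho \, q : \Omega_\T$ is by definition a pair whose first component $\fst (\rho \, q)$ is a type living in $\T$ (the first projection out of $\Omega_\T = \Sigma (P : \T), \dots$ delivers a type in $\T$); hence the function type $\fst (\rho \, q) \to X$ lies in $\T \sqcup \T = \T$, and therefore the total space $L'$ lies in $\T$ as well. Thus $L' : \T$ is the required copy of $\Lift_\T X$ in $\T$. There is no real obstacle here: the argument is a reshuffling of $\Sigma$'s followed by one application of the resizing hypothesis, and the only subtlety is to notice that $\fst (\rho \, q)$ genuinely lands in $\T$ rather than merely in $\T^+$.
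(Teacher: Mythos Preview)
Your proof is correct and follows essentially the same approach as the paper: the paper's proof is the one-liner ``We can take \m{\Sigma (p : \Omega'), \fst(\rho \, p) \to X} where \m{\rho : \Omega' \to \Omega_\T} is the given equivalence,'' and you have simply spelled out the routine details (the reshuffling of \m{\Lift_\T X} into a \m{\Sigma} over \m{\Omega_\T}, the reindexing along \m{\rho}, and the universe-level check) that the paper leaves implicit.
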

\begin{proof}
  We can take \m{\Sigma (p : \Omega'), \fst(\rho \, p) \to X} where \m{\rho : \Omega' \to \Omega_\T} is the given equivalence.
\end{proof}

We apply this lifting machinery to get the following, which doesn't
mention lifting in its formulation.

\begin{theorem} \label{better}
  (Characterization of injectivity in terms of algebraic injectivity.)
  In the presence of \m{\Omega}-resizing, the
  \m{\T,\T}-injectivity of a type \m{D} in a universe \m{\T} is
  equivalent to the propositional truncation of
  its algebraic \m{\T,\T}-injectivity.
\end{theorem}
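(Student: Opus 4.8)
The plan is to prove the two implications of the biconditional separately, and then to note that both sides are propositions, so that the logical equivalence upgrades to an equivalence of types. One direction is already available: the passage from the propositional truncation of algebraic $\T,\T$-injectivity to $\T,\T$-injectivity is the lemma proved above that the truncation of algebraic $\U,\V$-injectivity always gives $\U,\V$-injectivity, which rests only on injectivity being a proposition together with the universal property of propositional truncation, and needs no resizing.

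For the converse I would use the lifting as the ambient algebraically injective type into which $D$ embeds. Assume $D : \T$ is $\T,\T$-injective; the goal is to show that $D$ is algebraically $\T,\T$-injective in an unspecified way. The key point is that $\Lift_\T D$, which a priori lives in $\T^+$, has an equivalent copy $D' : \T$ by $\Omega$-resizing, say $\epsilon : \Lift_\T D \simeq D'$. Since the unit $\eta_D : D \emb \Lift_\T D$ is an embedding, so is $e \eqdef \epsilon \comp \eta_D : D \emb D'$, and this is now an embedding of types both living in $\T$, so the $\T,\T$-injectivity of $D$ supplies an \emph{unspecified} retraction $r : D' \to D$ with $r \comp e \sim \id$. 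On the other hand $\Lift_\T D$ is a monad algebra of the lifting (via $\mu_D$), hence algebraically $\T$-flabby, hence algebraically $\T,\T$-injective by Lemma~\ref{for27:2} (using $\T \sqcup \T = \T$); since $D'$ is equivalent to it, $D'$ too is algebraically $\T,\T$-injective, and this is a \emph{given} datum. Finally, algebraic injectivity is closed under retracts, so a retraction datum exhibiting $D$ as a retract of $D'$, combined with the algebraic injectivity of $D'$, produces algebraic $\T,\T$-injectivity data for $D$; applying the functoriality of propositional truncation on objects to this construction, with the unspecified retraction as input, yields the desired unspecified algebraic $\T,\T$-injectivity of $D$.

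Assembling the two implications completes the logical equivalence; and since $\T,\T$-injectivity is a proposition (shown above) while the propositional truncation of algebraic $\T,\T$-injectivity is a proposition by construction, this logical equivalence is in fact an equivalence of types, as the theorem asserts. I expect the crux to be not any particular computation but the strategic observation that $\Lift_\T D$ plays two roles simultaneously: it carries algebraic-flabbiness (in fact monad-algebra) structure, hence is algebraically injective, and, after $\Omega$-resizing, it is a type of $\T$ that $D$ embeds into via the unit — and it is precisely this resizing step that makes $\T,\T$-injectivity applicable to $\eta_D$. One must also be careful that plain injectivity delivers only an \emph{unspecified} retraction, so the final conclusion must be routed through the functoriality of truncation rather than produced outright; without $\Omega$-resizing the unit exhibits $D$ merely as a retract of a type in $\T^+$, which is exactly why one then recovers only the weaker Proposition~\ref{worse}.
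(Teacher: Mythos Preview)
Your proof is correct and takes essentially the same route as the paper's: the non-trivial direction goes via a resized copy of \m{\Lift_\T D} in \m{\T}, the embedding \m{D \emb L} given by the unit composed with the resizing equivalence, the unspecified retraction obtained from \m{\T,\T}-injectivity, and the transfer of algebraic injectivity from the free algebra to \m{D} by closure under retracts and functoriality of truncation. Your additional remark that both sides are propositions, so that the logical equivalence is automatically a type equivalence, is a welcome observation not made explicit in the paper's proof.
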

\begin{proof}
  We already know that the truncation of algebraic injectivity
  (trivially) gives injectivity.  For the other direction, let $L$ be
  a resized copy of \m{\Lift_\T D} in the universe \m{\T}.  Composing
  the unit with the equivalence given by resizing, we get an embedding
  \m{D \to L}, because embeddings are closed under composition and
  equivalences are embeddings.  Hence \m{D} is a retract of \m{L} in
  an unspecified way by the injectivity of~\m{D}, by extending its
  identity.  But \m{L}, being equivalent to a free algebra, is
  algebraically injective, and hence, being a retract of \m{L} in an
  unspecified way, \m{D} is algebraically injective in an unspecified
  way, because retracts of algebraically injectives are algebraically
  injective, by the functoriality of truncation on objects.
\end{proof}

As an immediate consequence, by reduction to the above results about algebraic
injectivity, we have the following corollary.
\begin{theorem}
  Under \m{\Omega}-resizing and propositional resizing, if a type
  \m{D} in a universe \m{\T} is \m{\T,\T}-injective , then it is also
  \m{\U,\V}-injective for any universes \m{\U} and \m{\V}.
\end{theorem}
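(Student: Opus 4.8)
The plan is to chain together three results already established, carrying ``in an unspecified way'' through each step by functoriality of propositional truncation on objects. First I would invoke Theorem~\ref{better}: since \m{\Omega}-resizing is assumed and \m{D} is \m{\T,\T}-injective, that theorem tells us \m{D} is algebraically \m{\T,\T}-injective in an unspecified way, i.e.\ we have an element of the propositional truncation of the type of algebraic \m{\T,\T}-injectivity data on \m{D}.

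Next I would move this data to the universes \m{\U} and \m{\V}. Lemma~\ref{universe:independence} provides, under propositional resizing, a function from the algebraic \m{\T,\T}-injectivity of \m{D} to its algebraic \m{\U,\V}-injectivity (internally this goes back and forth through algebraic flabbiness, which is where resizing is spent, via~\ref{for27:1} and~\ref{for27:2}). Since this is a map of types, applying the functoriality of propositional truncation on objects turns the unspecified algebraic \m{\T,\T}-injectivity obtained in the first step into algebraic \m{\U,\V}-injectivity of \m{D}, again in an unspecified way.

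Finally, by the earlier lemma stating that the truncation of the algebraic \m{\U,\V}-injectivity of a type gives its \m{\U,\V}-injectivity --- which holds because \m{\U,\V}-injectivity is a proposition, so the universal property of the truncation applies directly --- we conclude that \m{D} is \m{\U,\V}-injective, as required.

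I expect no real obstacle here: the argument is pure reduction to prior results. The only point deserving a moment's care is that Lemma~\ref{universe:independence} is phrased for untruncated algebraic injectivity, so one must explicitly pass through functoriality of truncation on objects rather than apply the lemma directly to the unspecified hypothesis; and one should keep track that \m{\Omega}-resizing is what powers the first step while propositional resizing is genuinely used in the middle step.
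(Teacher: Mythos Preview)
Your proposal is correct and matches the paper's proof essentially step for step: obtain unspecified algebraic \m{\T,\T}-injectivity from Theorem~\ref{better}, push it to unspecified algebraic \m{\U,\V}-injectivity via Lemma~\ref{universe:independence} and functoriality of truncation, then collapse to \m{\U,\V}-injectivity. Your added remarks about which resizing hypothesis powers which step, and the need to apply functoriality explicitly rather than invoke~\ref{universe:independence} directly on the truncated datum, are accurate and helpful clarifications.
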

\begin{proof}
  The type \m{D} is algebraically \m{\T,\T}-injective in an
  unspecified way, and so by functoriality of truncation on objects
  and algebraic injective resizing, it is algebraically
  \m{\U,\V}-injective in an unspecified way, and hence it is
  \m{\U,\V}-injective.
\end{proof}
At the time of writing, we are not able to establish the converse. In
particular, we don't have the analogue of~\ref{universe:independence}.

\section{The equivalence of  excluded middle with the (algebraic) injectivity of all pointed types}

Algebraic flabbiness can also be applied to show that all pointed
types are (algebraically) injective if and only if excluded middle
holds, where for injectivity resizing is needed as an assumption, but
for algebraic injectivity it is not.

The decidability of a type \m{X} is defined to be the assertion \m{X +
  (X \to \Zero)}, which says that we can exhibit a point of \m{X} or
else tell that \m{X} is empty.
The principle of excluded middle in univalent mathematics, for the
universe \m{\U}, is taken to mean that all subsingleton types in
\m{\U} are decidable:
\M{
  \EM_\U \eqdef \Pi (P : \U), \text{\m{P} is a subsingleton \m{\to P + (P \to \Zero)}.}
}
As discussed in the introduction, we are not assuming or rejecting
this principle, which is independent of the other axioms. Notice
that, in the presence of function extensionality, this principle is
a subsingleton, because products of subsingletons are subsingletons and
because \m{P + (P \to \Zero)} is a subsingleton for any
subsingleton \m{P}. So in the following we get data out of a proposition.

\begin{lemma}
  If excluded middle holds in the universe \m{\U}, then every pointed
  type \m{D} in any universe \m{\W} is algebraically \m{\U}-flabby.
\end{lemma}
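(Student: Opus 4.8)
The plan is to unfold the definition of algebraic \m{\U}-flabbiness and, given a subsingleton \m{P : \U} together with a map \m{f : P \to D}, to produce the required \m{d : D} by case analysis on the decidability of \m{P}, which is exactly what \m{\EM_\U} supplies. Let \m{d_0 : D} denote the point witnessing that \m{D} is pointed; this is the only use we will make of the pointedness hypothesis.

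First I would apply \m{\EM_\U} to \m{P}, using that \m{P} is a subsingleton, to obtain an element of \m{P + (P \to \Zero)}, and then split into two cases. In the left case we are given \m{p_0 : P}; here I set \m{d \eqdef f\, p_0}, and for an arbitrary \m{p : P} the subsingleton-hood of \m{P} gives an identification \m{q : p_0 = p}, so \m{\ap \, f \, q : f\, p_0 = f\, p} exhibits \m{d = f\, p}, as required. In the right case we are given a function \m{P \to \Zero}; here I set \m{d \eqdef d_0}, and for an arbitrary \m{p : P} we obtain an element of \m{\Zero}, from which \m{d_0 = f\, p} follows by the elimination rule for \m{\Zero}. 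Packaging the chosen \m{d} together with the corresponding proof of \m{\Pi(p:P), d = f\,p} yields the witness of algebraic \m{\U}-flabbiness.

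There is no genuine obstacle here; the only point worth flagging is that the pointedness of \m{D} is used precisely, and only, in the case where \m{P} is empty, since there the equational condition is vacuous but we must still exhibit some \m{d : D}. This also explains why the hypothesis cannot be dropped: the embedding \m{\Zero \emb \One} already shows that a type with no points fails to be algebraically flabby. Finally, since \m{\EM_\U} is a proposition in the presence of function extensionality, this construction legitimately extracts flabbiness data from a truth value.
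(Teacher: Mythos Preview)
Your proof is correct and follows essentially the same approach as the paper: case analysis on \m{\EM_\U} applied to \m{P}, taking \m{f\,p_0} as the witness when \m{P} holds and the given point \m{d_0} when \m{P \to \Zero}. You have simply spelled out in more detail why the equational condition \m{d = f\,p} is met in each case, which the paper leaves implicit.
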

\begin{proof}
  Let \m{d} be the given point of \m{D} and \m{f : P \to D} be a function with
  subsingleton domain. If we have a point \m{p : P}, then we can take
  \m{f \, p} as the flabbiness witness. Otherwise, if \m{P \to \Zero},
  we can take \m{d} as the flabbiness witness.
\end{proof}
\noindent
For the converse, we use the following.
\begin{lemma}
  If the type \m{P + (P \to \Zero) + \One} is algebraically
  \m{\W}-flabby for a given subsingleton \m{P} in a universe \m{\W},
  then \m{P} is decidable.
\end{lemma}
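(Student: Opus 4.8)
The plan is to instantiate algebraic flabbiness at the subsingleton $P$ itself. Write $D \eqdef P + (P \to \Zero) + \One$, which lives in $\W$ because $P$ does, and recall that $P$ is a subsingleton by hypothesis. Consider the map $f : P \to D$ sending $p$ to the injection $\mathrm{inl}\, p$ of $p$ into the first summand. Since $D$ is algebraically $\W$-flabby, we obtain a point $d : D$ together with an identification $d = \mathrm{inl}\, p$ for every $p : P$.

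Next I would do a case analysis on $d$, which lies in the three-fold coproduct $P + (P \to \Zero) + \One$. If $d$ comes from the first summand, say $d = \mathrm{inl}\, p_0$, then $P$ is decidable because it is pointed by~$p_0$. If $d$ comes from the $(P \to \Zero)$ summand, say via $n : P \to \Zero$, then $P$ is decidable because $n$ witnesses that it is empty. The only case needing a little thought is when $d$ comes from the $\One$ summand: here I would prove $P \to \Zero$ directly. Given $p : P$, the flabbiness identification gives $\mathrm{inl}\, p = d$; but $d$, coming from a summand other than the first, is a right injection, and an identification in a coproduct between a left injection and a right injection is an empty type by disjointness of the injections. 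So from $p$ we obtain an element of $\Zero$, hence $P \to \Zero$, and again $P$ is decidable.

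Since $P$ is decidable in each of the three cases, we are done. There is no genuine obstacle: the only two points of care are that $P$ being a subsingleton is exactly what licenses the appeal to algebraic $\W$-flabbiness, and that the $\One$-case is closed off not by conjuring a point or a refutation of $P$ out of nothing but by using the \emph{supplied} identification $d = \mathrm{inl}\, p$ to contradict the fact that $d$ is a right injection, via the disjointness of coproduct injections.
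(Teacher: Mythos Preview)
Your proof is correct, and it differs from the paper's in one key choice: the paper instantiates algebraic flabbiness at the subsingleton \m{P + (P \to \Zero)} with \m{f} the inclusion of the first \emph{two} summands into \m{D}, whereas you instantiate it at \m{P} alone with \m{f} the inclusion of just the first summand. In the paper's version, the flabbiness witness \m{d} is constrained to agree with the inclusion on both \m{P} and \m{P \to \Zero}; the \m{\One}-case is then excluded by showing it would force both the failure of \m{P} and the failure of \m{P \to \Zero}, which together yield a point of \m{\Zero}. In your version, the witness is only constrained on \m{P}, so when \m{d} lands in the \m{(P \to \Zero)}-summand you simply read off the refutation, and when it lands in \m{\One} you obtain \m{P \to \Zero} directly from disjointness. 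Your route is slightly more elementary: it avoids checking that \m{P + (P \to \Zero)} is a subsingleton and sidesteps the double-negation step. The paper's route, on the other hand, makes more symmetric use of the two non-trivial summands and perhaps better motivates why the type \m{P + (P \to \Zero) + \One} was chosen in the first place.
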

\begin{proof}
  Denote by \m{D} the type \m{P + (P \to \Zero) + \One} and let \m{f :
    P + (P \to \Zero) \to D} be the inclusion. Because \m{P + (P \to
    \Zero)} is a subsingleton, the algebraic flabbiness of \m{D} gives
  \m{d : D} with \m{d = f \, z} for all \m{z : P + (P \to \Zero)}.
  Now, by definition of binary sum, \m{d} must be in one of the three
  components of the sum that defines~\m{D}.  If it were in the third
  component, namely \m{\One}, then \m{P} couldn't hold, because if it
  did we would have \m{p:P} and hence, omitting the inclusions into
  sums, and considering \m{z=p}, we would have, \m{d = f p = p},
  because \m{f} is the inclusion, which is not in the \m{\One}
  component. But also \m{P \to \Zero} couldn't hold, because if it did
  we would have \m{\phi:P \to \Zero} and hence, again omitting the
  inclusion, and considering \m{z=\phi}, we would have \m{d = f \,
    \phi = \phi}, which again is not in the \m{\One} component. But
  it is impossible for both \m{P} and \m{P \to \Zero} to fail, because
  this would mean that we would have functions \m{P \to \Zero} (the
  failure of \m{P}) and \m{(P \to \Zero) \to \Zero} (the failure of
  \m{P \to \Zero}), and so we could apply the second function to the
  first to get a point of the empty type, which is not
  available. Therefore \m{d} can't be in the third component, and so
  it must be in the first or the second, which means that \m{P} is
  decidable.
\end{proof}
\noindent From this we immediately conclude the following:
\begin{lemma}
  If all pointed types in a universe \m{\W} are algebraically \m{\W}-flabby, then excluded middle holds in~\m{\W}.
\end{lemma}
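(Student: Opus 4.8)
The plan is to deduce this directly from the preceding two lemmas. Fix a universe \m{\W} and a subsingleton \m{P : \W}; the goal is to exhibit a point of \m{P + (P \to \Zero)}, which is precisely decidability of \m{P}, and doing this for all such \m{P} gives \m{\EM_\W}.

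First I would form the type \m{D \eqdef P + (P \to \Zero) + \One}. Since \m{P : \W}, we have \m{P \to \Zero_\W : \W} and \m{\One_\W : \W}, and binary sums of types in \m{\W} land in \m{\W}, so \m{D : \W}. Moreover \m{D} is pointed: the right-hand summand \m{\One} supplies the point \m{\star}. Hence, by the hypothesis that every pointed type in \m{\W} is algebraically \m{\W}-flabby, the type \m{D} is algebraically \m{\W}-flabby.

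Now I would invoke the previous lemma (if \m{P + (P \to \Zero) + \One} is algebraically \m{\W}-flabby for a subsingleton \m{P : \W}, then \m{P} is decidable) to conclude that \m{P} is decidable. Since \m{P} was an arbitrary subsingleton in \m{\W}, this establishes \m{\EM_\W}. There is essentially no obstacle here: the only point needing the slightest care is the bookkeeping that \m{D} genuinely lives in \m{\W} (so that the flabbiness hypothesis, which concerns types in \m{\W}, applies) and that \m{D} is pointed (so that the hypothesis applies at all) — both of which are immediate from the closure properties of \m{\W} under \m{+}, \m{\to}, \m{\Zero} and \m{\One}.
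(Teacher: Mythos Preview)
Your argument is correct and is exactly the route the paper takes: it states that the lemma follows immediately from the preceding one, which is precisely what you do by instantiating the hypothesis at the pointed type \m{D \eqdef P + (P \to \Zero) + \One_\W} and then invoking that lemma. (Only the single preceding lemma is actually used, not two; your proof body reflects this even if the opening sentence does not.)
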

\noindent
And then we have the same situation for algebraically injective types,
by reduction to algebraic flabbiness:
\begin{lemma}
  If excluded middle holds in the universe \m{\U \sqcup \V}, then any
  pointed type \m{D} in any universe \m{\W} is algebraically
  \m{\U,\V}-injective.
\end{lemma}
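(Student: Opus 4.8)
The plan is to reduce algebraic injectivity to algebraic flabbiness, exactly as in the earlier universe-independence constructions. Recall from Lemma~\ref{for27:2} that if a type in any universe is algebraically \m{\U \sqcup \V}-flabby, then it is algebraically \m{\U,\V}-injective. So it suffices to establish, under \m{\EM_{\U \sqcup \V}}, that every pointed type \m{D : \W} is algebraically \m{(\U \sqcup \V)}-flabby.

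This is exactly the preceding lemma, read with its single universe parameter instantiated to the join \m{\U \sqcup \V}: assuming excluded middle in a universe, any pointed type in any universe is algebraically flabby with respect to that universe. Concretely, let \m{d : D} be the given point; for a subsingleton \m{P : \U \sqcup \V} and \m{f : P \to D} we case-split on the decidability of \m{P} (available since \m{P : \U \sqcup \V}), taking \m{f \, p} as the flabbiness witness when \m{p : P}, and \m{d} when \m{P \to \Zero}; in either case the required identification \m{d' = f\, p} holds, using that \m{P} is a subsingleton in the first case and that \m{P} is empty in the second. Composing this with the conversion of Lemma~\ref{for27:2} gives the algebraic \m{\U,\V}-injectivity of \m{D}.

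There is essentially no obstacle; the only point requiring care is the universe bookkeeping. Lemma~\ref{for27:2} needs flabbiness at the join \m{\U \sqcup \V} because the fibers \m{j^{-1}(y)} of an embedding \m{j : X \to Y} with \m{X : \U} and \m{Y : \V} live in \m{\U \sqcup \V}; the hypothesis supplies excluded middle at precisely that universe, so those fibers are decidable, and pointedness of \m{D} is used exactly once, to handle the empty-fibre case.
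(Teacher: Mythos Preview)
Your proof is correct and follows exactly the paper's approach: the paper states this lemma with the remark ``by reduction to algebraic flabbiness,'' and you have spelled out precisely that reduction, instantiating the earlier lemma (excluded middle gives algebraic flabbiness of pointed types) at the universe \m{\U \sqcup \V} and then applying Lemma~\ref{for27:2}. Your additional remarks on the universe bookkeeping are accurate and make explicit what the paper leaves implicit.
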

\noindent Putting this together with some universe specializations, we
have the following construction.
\begin{theorem} 
  All pointed types in a universe \m{\U} are algebraically \m{\U,\U}-injective if and only if excluded middle holds in~\m{\U}.
\end{theorem}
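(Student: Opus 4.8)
The plan is to prove both directions purely by reduction to the lemmas just established in this section, so that the only real content is the bookkeeping of universe parameters via the definitional equation \m{\U \sqcup \U = \U}.

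For the direction \m{(\Leftarrow)}, I would assume \m{\EM_\U}. The immediately preceding lemma states that if excluded middle holds in \m{\U \sqcup \V}, then any pointed type in any universe \m{\W} is algebraically \m{\U,\V}-injective. Instantiating \m{\V \eqdef \U} and \m{\W \eqdef \U}, and using \m{\U \sqcup \U = \U} definitionally, this yields that under \m{\EM_\U} every pointed type in \m{\U} is algebraically \m{\U,\U}-injective, which is exactly the conclusion wanted.

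For the direction \m{(\Rightarrow)}, I would assume that every pointed type \m{D : \U} is algebraically \m{\U,\U}-injective. By Lemma~\ref{for27:1}, applied with \m{\W \eqdef \U} and \m{\V \eqdef \U}, each such \m{D} is then algebraically \m{\U}-flabby. Hence all pointed types in \m{\U} are algebraically \m{\U}-flabby, and the lemma asserting that this implies excluded middle in the ambient universe, applied with \m{\W \eqdef \U}, delivers \m{\EM_\U}.

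I do not expect a genuine obstacle: the theorem is a repackaging of two implications already proved, and the only point requiring care is that the universe bookkeeping lines up. In particular, the lemma producing algebraic injectivity from excluded middle must be instantiated so that the pointed type lives in \m{\U} itself (taking \m{\W = \U}), and the passages between algebraic flabbiness and algebraic injectivity must not shift the universe \m{\U} whose excluded middle is at stake — which is precisely why the specialization \m{\U \sqcup \U = \U} is what makes the round trip close up.
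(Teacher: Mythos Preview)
Your proposal is correct and matches the paper's approach exactly: the paper simply says ``Putting this together with some universe specializations, we have the following construction,'' and what you wrote is precisely that spelling-out, invoking the preceding lemma for \m{(\Leftarrow)} with \m{\V=\W=\U}, and for \m{(\Rightarrow)} passing through algebraic \m{\U}-flabbiness via Lemma~\ref{for27:1} and then the lemma that flabbiness of all pointed types gives excluded middle.
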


\noindent
And we have a similar situation with injective types.
\begin{lemma}
  If excluded middle holds, then every inhabited type of any universe is
  injective with respect to any two universes.
\end{lemma}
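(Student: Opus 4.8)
The plan is to reduce this to the already-established case of pointed types, using the fact that injectivity, unlike algebraic injectivity, is a proposition. First I would recall that the $\U,\V$-injectivity of $D$ is a proposition (by the lemma to that effect, since it is a $\Pi$-type whose last factor is a propositional truncation, and propositions are closed under $\Pi$). Hence, given the hypothesis that $D$ is inhabited, i.e.\ that $\trunc{D}$ holds, I can apply the eliminator for propositional truncation into propositions: to establish the $\U,\V$-injectivity of $D$ it suffices to establish it under the assumption of a genuine point $d : D$.

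So suppose $d : D$; then $D$ is pointed. Now I would invoke the earlier lemma that, if excluded middle holds in the universe $\U \sqcup \V$, then every pointed type in any universe is algebraically $\U,\V$-injective. Combining this with the lemma that algebraic $\U,\V$-injectivity implies $\U,\V$-injectivity yields the $\U,\V$-injectivity of $D$, which completes the truncation elimination and hence the proof.

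There is essentially no hard step: the argument is a bookkeeping exercise combining three already-proved facts (injectivity is a proposition; excluded middle makes pointed types algebraically injective; algebraic injectivity implies injectivity). The only points needing a little care are that excluded middle must be used at the universe $\U \sqcup \V$, which is covered by the blanket hypothesis that excluded middle holds, and the observation that it is precisely the propositionality of injectivity that licenses passing from the mere inhabitedness $\trunc{D}$ to an actual point $d : D$ — a move that would be illegitimate for algebraic injectivity, which is why the companion result there must assume pointedness rather than inhabitedness, as anticipated in the introduction via the embedding $\Zero \emb \One$.
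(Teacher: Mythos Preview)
Your proposal is correct and follows essentially the same route as the paper, which tersely says ``excluded middle gives algebraic injectivity, which in turn gives injectivity.'' You have made explicit the one step the paper leaves implicit: passing from \emph{inhabited} to \emph{pointed} via truncation-elimination into the proposition ``\m{D} is \m{\U,\V}-injective},'' which is exactly the right justification.
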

\begin{proof}
  Because excluded middle gives algebraic injectivity, which in turn gives
  injectivity.
\end{proof}
\noindent Without resizing, we have the following.
\begin{lemma}
  If every inhabited type \m{D:\W} is \m{\W,\W^+}-injective, then
  excluded middle holds in the universe \m{\W}.
\end{lemma}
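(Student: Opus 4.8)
The plan is to deduce \m{\EM_\W} by showing that an arbitrary subsingleton \m{P : \W} is decidable. Since \m{P + (P \to \Zero)} is a proposition (using function extensionality, just as noted for \m{\EM_\W} above), it suffices to obtain it from a propositionally truncated statement, so the fact that the injectivity hypothesis only supplies \emph{unspecified} retractions will be harmless. Concretely, I would set \m{D \eqdef P + (P \to \Zero) + \One}, a type in \m{\W}. It is pointed, via the element \m{\one} of its last summand, hence inhabited, so by hypothesis \m{D} is \m{\W,\W^+}-injective.

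Next I would apply the earlier lemma that a \m{\W,\W^+}-injective type \m{D} is, in an unspecified way, a retract of \m{D \to \W} (obtained by extending \m{\id_D} along the embedding \m{\Id_D : D \to (D \to \W)}). Now \m{D \to \W}, being an exponential power of the universe \m{\W}, and \m{\W} being algebraically \m{\W,\W}-injective by Lemma~\ref{ref:16:1}, is itself algebraically \m{\W,\W}-injective. Since algebraic injectives are closed under retracts, the functoriality of propositional truncation on objects turns the unspecified retraction above into the statement that \m{D} is algebraically \m{\W,\W}-injective in an unspecified way, and then Lemma~\ref{for27:1} (again composed with functoriality of truncation on objects) gives that \m{D} is algebraically \m{\W}-flabby in an unspecified way.

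Finally, the earlier lemma that algebraic \m{\W}-flabbiness of \m{P + (P \to \Zero) + \One} implies the decidability of \m{P} applies to \m{D}; since ``\m{P} is decidable'' is a proposition, it follows already from the \emph{truncated} assertion that \m{D} is algebraically \m{\W}-flabby in an unspecified way. Hence \m{P} is decidable, and as \m{P} was an arbitrary subsingleton in \m{\W}, we conclude \m{\EM_\W}. The one thing that needs care is the universe bookkeeping: one cannot feed the embedding \m{P + (P \to \Zero) \emb \One} directly into the hypothesis, since that would call for \m{\W,\W}-injectivity rather than the \m{\W,\W^+}-injectivity we are given; the detour through \m{D \to \W}, a type that lives in \m{\W^+} but whose flabbiness nonetheless sits at level \m{\W}, is precisely what makes the hypothesis applicable, and everything else is routine.
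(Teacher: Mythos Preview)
Your proof is correct and takes essentially the same route as the paper: both set \m{D \eqdef P + (P \to \Zero) + \One}, use the hypothesis to pass from \m{\W,\W^+}-injectivity of \m{D} to truncated algebraic injectivity and thence to truncated algebraic \m{\W}-flabbiness, and then invoke the flabbiness lemma together with the fact that decidability of a proposition is itself a proposition. You spell out the detour through the unspecified retract of \m{D \to \W} explicitly, whereas the paper packages that step as a single citation.
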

\begin{proof}
  Given a proposition \m{P}, we have that the type \m{D \eqdef P + (P
    \to \Zero) + \One_{\W}} is injective by the assumption. Hence it
  is algebraically injective in an unspecified way by
  Proposition~\ref{worse}. And so it is algebraically flabby in an
  unspecified way.  By the lemma, \m{P} is decidable in an unspecified
  way, but then it is decidable because the decidability of a
  proposition is a proposition.
\end{proof}
\noindent With resizing we can do better:
\begin{lemma}
  Under \m{\Omega}-resizing, if every inhabited type in a universe \m{\U} is
  \m{\U,\U}-injective, then excluded middle holds in \m{\U}.
\end{lemma}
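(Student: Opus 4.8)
The plan is to re-run the argument of the preceding resizing-free lemma — build a test type out of \m{P + (P \to \Zero) + \One} and squeeze the decidability of \m{P} out of its flabbiness — but to replace the step that passed from \m{\W,\W^+}-injectivity to unspecified algebraic injectivity by the sharper Theorem~\ref{better}, which under \m{\Omega}-resizing lets us stay at the level \m{\U,\U} throughout.

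In detail, I would fix a proposition \m{P:\U} and set \m{D \eqdef P + (P \to \Zero) + \One_\U}, a type in \m{\U} that is pointed (by the right injection of \m{\one}) and hence inhabited. By hypothesis \m{D} is \m{\U,\U}-injective, so Theorem~\ref{better}, available because we assume \m{\Omega}-resizing, tells us that \m{D} is algebraically \m{\U,\U}-injective in an unspecified way. Now push this through the functoriality of propositional truncation on objects together with the implications already established: Lemma~\ref{for27:1} converts algebraic \m{\U,\U}-injectivity into algebraic \m{\U}-flabbiness, so \m{D} is algebraically \m{\U}-flabby in an unspecified way; and the lemma stating that algebraic \m{\W}-flabbiness of \m{P + (P \to \Zero) + \One} forces \m{P} to be decidable (applied with \m{\W \eqdef \U}) then gives that \m{P} is decidable in an unspecified way.

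Finally, since \m{P} is a proposition, the type \m{P + (P \to \Zero)} is itself a proposition — \m{P} and \m{P \to \Zero} cannot both be inhabited, and \m{P \to \Zero} is a proposition by function extensionality, which we have from univalence — so the unspecified decidability is untruncated, i.e.\ \m{P} is decidable. As \m{P:\U} was arbitrary this is \m{\EM_\U}, and the proof is complete.

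I do not expect a genuine obstacle here: the construction is a reduction, and all the real work is outsourced. The one place where something nontrivial happens is the appeal to Theorem~\ref{better}, whose proof is exactly where \m{\Omega}-resizing is spent — it is used to resize a copy of \m{\Lift_\U D} into \m{\U} so that the unit embedding \m{D \to \Lift_\U D} lands in \m{\U} and realizes \m{D} as a retract of an algebraically \m{\U,\U}-injective type. Everything else — choosing \m{D}, the two applications of functoriality-on-objects of the truncation, and the observation that decidability of a proposition is a proposition — is routine, the only care needed being to keep all the ambient universes equal to \m{\U}.
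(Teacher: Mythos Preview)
Your proposal is correct and follows essentially the same route as the paper: form \m{D \eqdef P + (P \to \Zero) + \One_\U}, use the hypothesis to get \m{\U,\U}-injectivity, invoke Theorem~\ref{better} under \m{\Omega}-resizing to get unspecified algebraic \m{\U,\U}-injectivity, pass to unspecified algebraic \m{\U}-flabbiness, apply the decidability lemma under the truncation, and then untruncate because decidability of a proposition is a proposition. You merely spell out a few steps (inhabitedness of \m{D}, the intermediate flabbiness step via Lemma~\ref{for27:1}, and why decidability is a proposition) that the paper leaves implicit.
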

\begin{proof}
  Given a proposition \m{P}, we have that the type \m{D \eqdef P + (P
    \to \Zero) + \One_{\U}} is injective by the assumption. Hence it
  is injective in an unspecified way by
  Theorem~\ref{better}. And so it is algebraically flabby in an
  unspecified way.  By the lemma, \m{P} is decidable in an unspecified
  way, and hence decidable.
\end{proof}
\begin{theorem} 
  Under \m{\Omega}-resizing, all inhabited types in a universe \m{\U} are
  \m{\U,\U}-injective if and only if excluded middles
  holds in~\m{\U}.
\end{theorem}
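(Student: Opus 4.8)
The plan is to derive this biconditional by assembling the two immediately preceding lemmas, one supplying each direction, after specializing their universe parameters to~\m{\U}. No new construction is needed; the work is entirely in those lemmas.

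For the direction \m{(\Leftarrow)}, I would assume \m{\EM_\U} and invoke the lemma stating that excluded middle makes every inhabited type of any universe injective with respect to any two universes; taking all the universes occurring there to be \m{\U} gives exactly that every inhabited type in \m{\U} is \m{\U,\U}-injective. This direction uses no resizing at all, since it factors through algebraic injectivity and algebraic flabbiness, which become available unconditionally once excluded middle is assumed.

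For the direction \m{(\Rightarrow)}, I would assume that every inhabited type in \m{\U} is \m{\U,\U}-injective and apply the lemma stating that, under \m{\Omega}-resizing, this hypothesis forces \m{\EM_\U}. Recall how that lemma proceeds: given a proposition \m{P}, the type \m{D \eqdef P + (P \to \Zero) + \One_\U} is inhabited and hence \m{\U,\U}-injective; by Theorem~\ref{better} (which is where \m{\Omega}-resizing enters) it is then injective in an unspecified way, hence algebraically flabby in an unspecified way; the decidability lemma then yields that \m{P} is decidable in an unspecified way; and finally the truncation is discarded because decidability of a proposition is itself a proposition.

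There is essentially no obstacle in this assembly: the theorem is a repackaging of the two preceding lemmas into a single logical equivalence. The only subtlety I would flag is that \m{\Omega}-resizing is genuinely needed only for the forward implication (through its use in Theorem~\ref{better}), whereas the converse holds with no resizing hypothesis whatsoever; stating the theorem uniformly under \m{\Omega}-resizing is harmless but slightly over-strong for the \m{(\Leftarrow)} half.
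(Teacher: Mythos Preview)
Your proposal is correct and matches the paper exactly: the theorem carries no separate proof in the paper and is simply the conjunction of the two immediately preceding lemmas, one per direction, with the universe parameters specialized to~\m{\U}. The one slip in your recounting of the forward lemma---writing ``injective in an unspecified way'' where ``algebraically injective in an unspecified way'' is meant after invoking Theorem~\ref{better}---mirrors the paper's own phrasing in that lemma, and your next clause (``hence algebraically flabby in an unspecified way'') shows you have the logic right.
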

\noindent It would be interesting to get rid of the resizing assumption, which,
as we have seen, is not needed for the equivalence of the algebraic
injectivity of all pointed types with excluded middle.

\bibliographystyle{plain}
\bibliography{references}

\end{document}